\documentclass[preprint, bj]{imsart}


\startlocaldefs
\theoremstyle{plain}

\newtheorem{theorem}{Theorem}[section]
\newtheorem{proposition}{Proposition}[section] 
\newtheorem{lemma}[theorem]{Lemma}
\newtheorem{corollary}{Corollary}[section]

\theoremstyle{remark}
\newtheorem{definition}[theorem]{Definition}

\newtheorem{assumption}{Assumption}
\newtheorem{remark}{Remark}[section]
\endlocaldefs

\usepackage{url}
\usepackage{natbib}
\usepackage{amsmath}
\usepackage{amsthm}
\usepackage{amssymb}
\usepackage{amsfonts}
\usepackage{graphicx}
\usepackage{url} 
\usepackage{multirow}
\usepackage{longtable}
\usepackage{csquotes} 
\usepackage{dsfont}
\usepackage{pdflscape}
\usepackage{morefloats}
\usepackage{nccfoots} 
\usepackage{subcaption}
\RequirePackage[colorlinks,citecolor=blue,urlcolor=blue]{hyperref}

\newcommand{\E}{{\operatorname E}}
\newcommand{\Cov}{{\operatorname{Cov}}}
\newcommand{\Var}{{\operatorname{Var}}}
\newcommand{\dcov}{{\operatorname{dcov}}}
\newcommand{\cov}{{\operatorname{cov}}}
\newcommand{\R}{{\mathbb R}}
\newcommand{\N}{{\mathbb N}}

\newcommand{\e}{{\operatorname{e}}}

\newcommand{\Ret}{{\operatorname{Re}}}
\newcommand{\Imt}{{\operatorname{Im}}}

\makeatletter
\newcommand*{\defeq}{\mathrel{\rlap{%
                     \raisebox{0.3ex}{$\m@th\cdot$}}%
                     \raisebox{-0.3ex}{$\m@th\cdot$}}%
                     =}
\makeatother

\makeatletter
\newcommand*{\eqdef}{=\mathrel{\rlap{%
                     \raisebox{0.3ex}{$\m@th\cdot$}}%
                     \raisebox{-0.3ex}{$\m@th\cdot$}}%
                     }
\makeatother
\newcommand{\ignore}[1]{}

\newcommand{\pr}[1]{\left(#1\right)}

\begin{document}

\begin{frontmatter}
\title{Test for independence of long-range dependent time series using distance covariance }
\runtitle{Distance covariance-based test for independence of LRD time series}

\begin{aug}
\author[A]{\fnms{Annika} \snm{Betken}\ead[label=e1]{a.betken@utwente.nl}}
\and
\author[B]{\fnms{Herold} \snm{Dehling}\ead[label=e2]{herold.dehling@rub.de}}
\runauthor{Betken,  Dehling}

\address[A]{Faculty of Electrical Engineering, Mathematics and Computer Science (EEMCS), 
University of Twente,
 \printead[presep={\ }]{e1}}

\address[B]{Faculty of Mathematics,
Ruhr-University Bochum,
 \printead[presep={\ }]{e2}}
\end{aug}

\begin{abstract}
We apply the concept of distance covariance for testing independence of two long-range dependent time series. As test statistic we propose a linear combination of empirical distance cross-covariances. We derive the asymptotic distribution of the test statistic, and we show consistency against a very general class of alternatives. The asymptotic theory developed in this paper is based on a novel non-central limit theorem for stochastic processes with values in an $L^2$-Hilbert space.  This limit theorem is of  general theoretical interest which goes beyond the context of this article. Subject to the dependence in the data, the standardization and the limit distributions of the proposed test statistic vary. Since the limit distributions are unknown, we propose a subsampling procedure to determine the critical values for the proposed test, and we provide a proof for  the validity of subsampling. 
 In a simulation study, we investigate the finite-sample behavior of our test, and we compare its performance to tests based on the empirical cross-covariances. As an application of our results we analyze the cross-de\-pen\-den\-cies between  mean monthly discharges of three rivers.  
\end{abstract}

\begin{keyword}[class=MSC]
\kwd[Primary ]{62G10}
\kwd[; secondary ]{62H20} 		
\end{keyword}

\begin{keyword}
\kwd{time series}
\kwd{dependence measure}
\kwd{distance correlation}
\kwd{long-range dependence}
\kwd{subsampling}
\kwd{test for independence}
\end{keyword}

\end{frontmatter}

\tableofcontents


\section{Introduction}
Given observations $(X_1,Y_1),\ldots, (X_n,Y_n)$ that form the initial segment of a bivariate stationary process $(X_i,Y_i)_{i\geq 1}$, our goal is to test the hypothesis that the processes $(X_i)_{i\geq 1}$ and $(Y_i)_{i\geq 1}$ are independent. We will present a test that is based on a novel measure  of  dependence between time series using empirical distance cross-covariances. Unlike tests based on the empirical covariance, the proposed test is consistent against a very general class of deviations from the hypothesis of independence. 
 We analyze the large-sample behavior of the test for long-range dependent data, and  propose a subsampling procedure to determine critical values for the testing procedure and prove its validity for all measurable  statistics that apply to two long-range dependent time series. 

Classical tests for independence are based on  the empirical covariance as a measure of the degree of dependence between two random variables $X,Y$. Given data $(X_1,Y_1),\ldots, (X_n,Y_n)$, where each pair $(X_i,Y_i)$ has the same joint distribution as $(X,Y)$, the empirical correlation coefficient is defined as 
\[
  r_{X,Y}=\frac{ \sum_{i=1}^n (X_i-\bar{X})(Y_i -\bar{Y}) }{\sqrt{ \sum_{i=1}^n (X_i-\bar{X})^2 \sum_{i=1}^n (Y_i-\bar{Y})^2 } },
\]
where $\bar{X}=\frac{1}{n}\sum_{i=1}^n X_i$ and $\bar{Y}=\frac{1}{n}\sum_{j=1}^n Y_j$.
This statistic can be used to test for marginal independence, i.e. independence of $X_1$ and $Y_1$ in a stationary time series.  Portmanteau-type tests that are able to 
detect  dependence between the processes $(X_i)_{i\geq 1}$ and $(Y_i)_{i\geq 1}$ at arbitrary lags have been developed, e.g., by \cite{haugh:1976} and \cite{shao:2009}. 

It is well-known that the empirical covariance measures only the degree of linear dependence between random variables, while it is  insensitive to nonlinear dependence. As a result, the random variables might be highly dependent although they are uncorrelated. 
In a series of papers, \cite{szekely:2007} and
\cite{szekely:2009, szekely:2012, szekely:2013, szekely:2014} introduced distance covariance and distance correlation as alternative measures of the degree of dependence. Distance covariance of the random variables $X$ and $Y$ is defined as 
\begin{align*}
  \dcov(X,Y) = \iint \big| \varphi_{X,Y}(s,t)-\varphi_X(s)\varphi_Y(t)   \big|^2  w(s,t) \, ds\, dt, 
\end{align*}
 where $\varphi_{X,Y}(s,t)$, $\varphi_X(s)$, $\varphi_Y(t)$ denote the joint and marginal characteristic functions of $X,Y$. The function $w(s,t)$ is a positive weight function; a common choice is $w(s,t) =|s|^{-2}\, |t|^{-2}$.  
Throughout this article we stick to this choice. 
It is easy to see that the random variables $X$ and $Y$ are independent if and only if $\dcov(X,Y)=0$. The empirical distance covariance is defined as 
\begin{align*}
   \dcov_n(X,Y) = \iint \big| \varphi_{X,Y}^{(n)}(s,t)-\varphi_X^{(n)}(s) \, \varphi_Y^{(n)}(t)   \big|^2  w(s,t) \, ds\, dt, 
\end{align*}
where $\varphi_{X,Y}^{(n)}(s,t)$, $\varphi_X^{(n)}(s)$ and $\varphi_Y^{(n)}(t)$ denote the empirical characteristic functions of 
$(X_1,Y_1),\ldots,$ $(X_n,Y_n)$, and the marginal empirical characteristic functions of 
$X_1,\ldots, X_n$ and $Y_1,\ldots,Y_n$. \cite{szekely:2007} derive the large-sample distribution  of $\dcov_n$ for independent pairs $(X_i,Y_i)_{i\geq 1}$. 
\cite{dehling:et.al.:2020} apply the distance covariance to the components of
i.i.d. sequences of pairs of discretized stochastic processes  and
show that the
empirical distance covariance converges to  zero  if and only if the component
processes are independent.
 Under the assumption of absolutely regular processes  \cite{kroll:2020} 
derives the asymptotic distribution of the empirical distance covariance, while 
   \cite{betken:2021} 
 develop a test for independence of two absolutely regular processes and, for this,   
   prove the validity of a
    block bootstrap procedure for the empirical distance covariance.

\cite{zhou:2012} extends the concept of distance correlation to auto-distance correlation of  time series as a tool to explore nonlinear dependence within a time series.  \cite{davis:matsui:mikosch:wan:2018} apply the auto-distance correlation function to stationary multivariate time series in order to measure lagged auto- and cross-dependencies in a time series. Under mixing assumptions, these authors establish asymptotic theory for the empirical auto- and cross-de\-pen\-den\-cies. 
Within machine learning communities testing for independence of observations is often based 
on the Hilbert–Schmidt independence criterion (HSIC); see 
\cite{gretton:2005}, \cite{gretton:2007}, \cite{smola:2007}, \cite{zhang:2008}.
The associated test statistic corresponds to the  maximum mean discrepancy (MMD) of probability distributions, i.e. the difference between embeddings 
of  probability distributions into reproducing kernel Hilbert spaces.
Most notably, \cite{sejdinovic:2013}
show that for a specific choice of kernel function for the HSIC,  distance covariance and MMD  coincide. 
Although most applications of HSIC based testing  are  limited to independent and identically distributed data,  extensions to interdependent observations exist:
 \cite{zhang:2008}
estimate the MMD by
 a fourth-order $U$-statistic and establish asymptotic normality
 of this statistic for stationary mixing sequences.
The article focuses an applications of this result to time series clustering and segmentation and refers to
 \cite{borovkova:2001} for a formal  proof.
 For   random processes satisfying $\phi$- and $\beta$-mixing conditions, 
\cite{chwialkowski:2014} derive the asymptotic distribution of the  HSIC statistic from established theory on $U$-statistics.
\cite{wang:2021} apply the HSIC  to test for independence of innovations of two multiviariate time series  and, for this,  derive its asymptotic distribution under  $\beta$-mixing assumptions on the individual time series.

In the present paper, we initiate the study of distance covariance for long-range dependent processes. Such processes, also known as long memory processes, are commonly used as models for random phenomena that exhibit dependence at all scales,  slow decay of correlations and non-standard scaling behavior. Such phenomena occur, e.g., in hydrological and financial data, and are not captured by common time series models such as ARMA processes; see e.g. \cite{mandelbrot:1982, mandelbrot:1997}.  \cite{pipiras:taqqu:2017} present stochastic models and probabilistic theory for long-range dependent processes. Statistical methods for long-range dependent processes are presented in \cite{beran:feng:ghosh:kulik:2013} and in \cite{surgailis:2012}. Our mathematical analysis is based on novel theory for Hilbert space-valued long-range dependent processes which we apply to the empirical characteristic functions. Our results show that the large-sample behavior of the empirical distance covariance of long-range dependent data differs markedly from independent and short-range dependent data.

In order to detect possible cross-dependencies between  time series $X=(X_i)_{i\geq 1}$ and $Y = (Y_i)_{i\geq 1}$, we study the empirical distance cross-covariance function
\begin{align*}
  \dcov(X,Y;h)&=  \dcov(X_1,Y_{1+h}) \\
  &=\iint \big| \varphi_{X,Y; h}(s,t)-\varphi_X(s)\varphi_Y(t)   \big|^2  w(s,t) \, ds\, dt,
\end{align*}
where $ \varphi_{X,Y; h}(s,t)= \E\left( e^{i (sX_1 + t Y_{1+h}) }\right)$,
and its empirical analogue
\[
  \dcov_n(X,Y;h)=  \iint \big| \varphi_{X,Y;h}^{(n)}(s,t)-\varphi_X^{(n)}(s)\, \varphi_Y^{(n)}(t)   \big|^2  w(s,t) \, ds\, dt,
\]
where 
$\varphi_{X,Y; h}^{(n)} (s,t) =\frac{1}{n} \sum_{j=1}^{n-h} e^{i (sX_j + t Y_{j+h}) }$  is the joint empirical characteristic function of the pairs $(X_1,Y_{1+h}), \ldots, (X_{n-h},Y_n)$ .
 We  determine the joint large sample distribution of the empirical distance cross-covariances at various lags $h$.  Given a summable weight sequence $(a_k)_{k\geq 0}$, we propose 
a test for independence of  the long-range dependent processes $(X_i)_{i\geq 1}$ and $(Y_i)_{i\geq 1}$ using the
linear combination of empirical distance cross-covariances 
$  \sum_{h=0}^\infty a_h \, \dcov_n(X,Y;h) $ as test statistic.
  We  study the asymptotic behavior of this test and show that it is
consistent against a very general class of alternatives, namely against all alternatives where $X_1$ and $Y_h$ are dependent for some lag $h$.

Section \ref{sec:main} contains the main theoretical results of our work:
Section \ref{subsec:notation} introduces the (empirical)
distance cross-covariance as an element of an $L^2$-Hilbert space.
Section \ref{subsec:main} establishes the testing procedure for deciding on whether two time series are independent based on the empirical distance cross-covariances and an approximation of the test statistics distribution by  subsampling. 
Along the way, the
 asymptotic distribution of the empirical distance cross-covariance function, and of the proposed test statistic are derived under the test's hypothesis of independent data-generating processes.
 Under the weaker assumption of a stationary, ergodic bivariate data-generating process, consistency of the empirical distance cross-covariance and of the proposed test  is established. Moreover, we prove the validity of the corresponding subsampling procedure
as approximation of the distribution of any measurable function of two independent, stationary LRD time series, i.e.
  in a setting that applies to the considered situation, but may be of interest in other contexts, as well. 
 As basis for deriving the asymptotic distribution of the  distance cross-covariances of subordinated Gaussian processes, Section \ref{subsec:CLT} 
establishes a non-central limit theorem for processes with values in the corresponding $L^2$-Hilbert space. 
The limit theorem is  not specially geared to applications of distance cross-covariance functions. It is thus of particular and independent interest, and, therefore, considered separately.
We assess
the finite sample performance of a hypothesis test based on the
distance covariance through simulations in Section \ref{subsec:simulations}. In particular, we compare its finite sample
performance to that of a test based on the empirical covariance.
For this purpose, we also establish convergence results for this
dependence measure. Different dependencies between time series
are considered for a comparison between the two testing procedures. It turns out that only linear dependence is better
detected by a test based on  the empirical covariance, while all other dependencies are
better detected by a test based on the empirical distance covariance. An analysis with regard to
cross-dependencies between the mean monthly discharges of three
different rivers in Section \ref{sec:data} provides an application of the theoretical results
established in this article.

\section{Main results}\label{sec:main}

Before stating the main theoretical results of our work in Section \ref{subsec:main},
the following subsection (Section \ref{subsec:notation}) establishes the (empirical)
distance cross-covariance as an element of an $L^2$-Hilbert space. Moreover, it motivates 
the consideration of a non-central limit theorem for processes with values in the corresponding  space (see Section \ref{subsec:CLT}) in this context.

\subsection{Basic notations and outline of approach}\label{subsec:notation}

In this paper, we assume that time series are realizations of  stationary subordinated Gaussian processes $(X_i)_{i\geq 1}$ ,  i.e. we assume that there exists a Gaussian process $(\xi_i)_{i\geq 1}$ and a measurable function $G:\R\rightarrow \R$ such that
$X_i=G(\xi_i)$ for all $i\geq 1$. This class of processes has been widely studied in the literature; see, e.g. \cite{beran:feng:ghosh:kulik:2013}. For any particular distribution function $F$, one can find a transformation $G$ such that $X_i$ has the distribution $F$. Moreover, there exist algorithms for generating Gaussian processes that, after suitable transformation, yield subordinated Gaussian processes with marginal distribution $F$ and a predefined covariance structure; see \cite{pipiras:taqqu:2017}. 

We investigate the distance covariance for long-range dependent (LRD) processes $(X_i)_{i\geq 1}$, which are characterized by a slow decay of autocorrelations. We specifically assume that the autocorrelation function satisfies
\[
  \rho(k)=\Cov(X_1, X_{1+k}) \sim k^{-D} L(k), \mbox{ as } k \rightarrow \infty,
\]
with $D\in (0,1)$ for some slowly  varying function $L$. We refer to $D$ as the long-range dependence (LRD) parameter. 
Under certain assumptions, subordinated Gaussian processes exhibit long-range  dependence, if the underlying Gaussian process $(\xi_k)_{k\in \N}$ is long-range dependent. Specifically, assume that $\Cov(\xi_1,\xi_{1+k})\sim k^{-D} L(k)$, as $k\rightarrow \infty$, for some constant $D\in (0,1)$ and some slowly varying function $L$. Let $\varphi$ denote the density of a standard normal distribution, and assume that $G\in L^2(\R,\varphi(x)\, dx)$ is a function with Hermite rank 
\[
  r:=\min\{ k\geq 1: J_k(G)=0\},
\]
where $J_k(G)=E(G(X)H_r(X))$, and where $H_r(x)$ denotes the $r$-th order Hermite polynomial. Then 
\[
  \Cov(G(\xi_1),G(\xi_{1+k})) \sim J_r^2(G) \, r!\, k^{-Dr} L^r(k), \mbox{ as } k \rightarrow \infty.
\]
Hence, the subordinated Gaussian time series $(G(\xi_k))_{k\in \N}$ is long-range dependent with LRD parameter $D_G:=D\, r$ and slowly varying function $L_G(k)=J_r^2(G) \, r!\, L^r(k)$ whenever $D\, r<1$. 
 
In the literature one finds two approaches to the study of the asymptotic behavior of the distance covariance.  The first approach is based on a representation of the empirical distance covariance as a $V$-statistic, that was established by 
\cite{szekely:2007}, and later extended by \cite{lyons:2013}  to general metric spaces. This approach makes it possible to use existing limit theorems for $V$-statistics, available both for i.i.d. as well as for short-range dependent data. Since much less is known about $V$-statistics for long-range dependent data, we use an alternative approach based on  the representation of the empirical distance covariance as the square norm of the difference between the joint empirical characteristic function and the product of the marginal empirical characteristic functions in the complex Hilbert space $L^2(\R^2,w(s,t)\, ds\, dt)$ equipped with the norm
\[
 \|f\|_2=\Big(\iint |f(s,t)|^2 w(s,t) \, ds\, dt \Big)^{1/2}. 
\] 
By definition, we obtain 
\begin{equation}
\dcov_n(X,Y; h) = \big\| \varphi_{X,Y; h}^{(n)}(s,t) -\varphi_X^{(n)}(s)\, \varphi_Y^{(n)}(t)\big\|^2,
\label{eq:basic}
\end{equation}
where $\varphi_{X,Y; h}^{(n)}(s,t)$, $\varphi_X^{(n)}(s)$, and $\varphi_Y^{(n)}(t)$  denote 
the joint and the marginal empirical characteristic functions of the data $(X_1,Y_1),\ldots, (X_n,Y_n)$ defined as
\begin{align*}
& \varphi_{X,Y; h}^{(n)} (s,t) = \frac{1}{n} \sum_{j=1}^{n-h} e^{i(s\,X_j+t\, Y_{j+h})}, \\
& \varphi_X^{(n)}(s) = \frac{1}{n} \sum_{j=1}^n e^{isX_j}, \
 \text{and} \ \
  \varphi_Y^{(n)}(s) = \frac{1}{n} \sum_{j=1}^n e^{isY_j}.
\end{align*}

By the representation \eqref{eq:basic}, the asymptotic distribution of the empirical distance cross-covariance can be obtained from the asymptotic distribution of the process
$ \varphi_{X,Y;h}^{(n)}(s,t) -\varphi_X^{(n)}(s)\, \varphi_Y^{(n)}(t)$. In order to analyze this process, we make use of the following decomposition:
\begin{align}\label{eq:decomposition}
  &\varphi_{X,Y;h}^{(n)}(s,t) -\varphi_X^{(n)}(s)\, \varphi_Y^{(n)}(t) \\
   =& -\big(\varphi_X^{(n)}(s) -\varphi_X(s)\big) \big( \varphi_Y^{(n)}(t) -\varphi_Y(t)  \big) \nonumber \\
   &  +\frac{1}{n}\sum_{j=1}^{n-h} \big(\exp(isX_j)-\varphi_X(s)\big)\big(\exp(itY_{j+h})-\varphi_Y(t)\big)\notag\\
   & -\varphi_X(s)\frac{1}{n}\sum_{j=1}^{h}\exp(itY_{j}) -\varphi_Y(t)\frac{1}{n}\sum_{j=n-h+1}^{n}\exp(itX_{j})\notag\\
      &-\frac{h}{n}\varphi_X(s)\varphi_Y(t).   \notag
\end{align}
Note that the last three summands on the right-hand side of the above identity are $o(n^{-\gamma})$ for any $\gamma<1$. As a result, the asymptotic distribution of the empirical distance cross-covariance function is determined by the first two summands.
In the following sections, 
these two summands will be considered separately. For an analysis of the first summand, we make use of limit theorems for Hilbert space-valued random variables that we develop in this paper. For this, we consider $\varphi_X^{(n)}(s)-\varphi_X(s)$ and  $\varphi_Y^{(n)}(t)-\varphi_Y(t)$ as elements of  $L^2(\R,w(s)\, ds)$, where $w(s)=1/s^2$, and $(\varphi_X^{(n)}(s)-\varphi_X(s))(\varphi_Y^{(n)}(t)-\varphi_Y(t))$ 
as an element of $L^2(\R^2,w(s,t)\, ds\, dt)$. The following lemma provides theoretical justification for these considerations. 

\begin{lemma}\label{lemma:existence_1}
Let $(X_i)_{i\geq 1}$ and $(Y_i)_{i\geq 1}$ with $\E \left|X_1\right|<\infty$ and $\E \left|Y_1\right|<\infty$ be  stationary processes.
Then, it holds
that 
\begin{align*}
\int_{\mathbb{R}}\left|\varphi_X^{(n)}(s)-\varphi_X(s)\right|^2w(s)ds<\infty, \ \int_{\mathbb{R}}\left|\varphi_Y^{(n)}(t)-\varphi_Y(t)\right|^2w(t)dt<\infty,
\intertext{and}
\int_{\mathbb{R}}\int_{\mathbb{R}}\left|\left(\varphi_Y^{(n)}(t)-\varphi_Y(t)\right)\left(\varphi_X^{(n)}(s)-\varphi_X(s)\right)\right|^2w(s, t)ds dt<\infty,
\end{align*}
where  $w(s,t) =c|s|^{-2}\, c|t|^{-2}$ for some constant $c$.
\end{lemma}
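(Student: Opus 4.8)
The plan is to split each range of integration into a neighbourhood of the origin, where the weight $\omega(s) = (cs^2)^{-1}$ is singular, and its complement, where $\omega$ is integrable. On the complement $\{|s| \ge 1\}$ there is nothing to do beyond the trivial uniform bound $|\varphi_X^{(n)}(s) - \varphi_X(s)| \le 2$, since $\int_{|s|\ge 1} s^{-2}\,ds < \infty$. The only substantive point is therefore the behaviour near $s = 0$, and this is controlled by combining the fact that $\varphi_X^{(n)}(0) = \varphi_X(0) = 1$ with a first-order (Lipschitz) estimate that uses the moment assumption.

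Concretely, I would first record the elementary inequality $|e^{iu}-1| \le |u|$ for $u\in\R$. Applied termwise it gives $|e^{isX_j}-1|\le |s|\,|X_j|$ and, after taking expectations, $|\varphi_X(s)-1|\le |s|\,\E|X_1|$; writing $C_n \defeq \frac1n\sum_{j=1}^n|X_j| + \E|X_1|$, which is finite for every realization since $\E|X_1|<\infty$ and the $X_j$ are real-valued, the triangle inequality yields
\[
\left|\varphi_X^{(n)}(s)-\varphi_X(s)\right| \;\le\; \frac1n\sum_{j=1}^n\left(\left|e^{isX_j}-1\right| + \left|\varphi_X(s)-1\right|\right) \;\le\; |s|\,C_n .
\]
Combining this with the bound $|\varphi_X^{(n)}(s)-\varphi_X(s)|\le 2$ on $\{|s|\ge 1\}$,
\[
\int_{\R}\left|\varphi_X^{(n)}(s)-\varphi_X(s)\right|^2\omega(s)\,ds \;\le\; \frac1c\int_{|s|\le 1} C_n^2\,ds + \frac4c\int_{|s|\ge 1}\frac{ds}{s^2} \;<\;\infty ,
\]
and the identical computation with $X$ replaced by $Y$ settles the second assertion.

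For the third integral I would use that the bivariate weight factorises, $\omega(s,t) = \omega(s)\,\omega(t)$, so that since the integrand is non-negative Tonelli's theorem gives that the double integral equals the product
\[
\left(\int_{\R}\left|\varphi_X^{(n)}(s)-\varphi_X(s)\right|^2\omega(s)\,ds\right)\left(\int_{\R}\left|\varphi_Y^{(n)}(t)-\varphi_Y(t)\right|^2\omega(t)\,dt\right),
\]
which is finite by the first two parts. I do not anticipate any genuine obstacle: the argument is entirely elementary, and the single slightly delicate point — integrability at the origin — is disposed of by the Lipschitz bound above, which is exactly where the hypotheses $\E|X_1|<\infty$ and $\E|Y_1|<\infty$ are used. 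One could alternatively invoke the identity $\int_{\R}(1-\cos su)\,s^{-2}\,ds = \pi|u|$ employed by \cite{szekely:2007}, but this refinement is not needed for a mere finiteness statement.
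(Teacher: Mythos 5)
Your argument is correct, and it is genuinely more elementary than the one in the paper. You split the $s$-axis at $|s|=1$, kill the singularity of $\omega(s)=(cs^2)^{-1}$ at the origin with the Lipschitz bound $|e^{iu}-1|\le|u|$ (which is where $\E|X_1|<\infty$ enters, giving $|\varphi_X^{(n)}(s)-\varphi_X(s)|\le |s|\,C_n$ with $C_n$ finite pathwise), and use the trivial bound $2$ together with $\int_{|s|\ge 1}s^{-2}ds<\infty$ elsewhere. The paper instead computes the pointwise identity $|u_j(s)|^2=2\E_X[1-\cos(s(X_j-X))]-\E[1-\cos(s(X-X'))]$ for $u_j(s)\defeq e^{isX_j}-\varphi_X(s)$ and then integrates against $|s|^{-2}$ using Lemma 1 of Sz\'ekely et al.\ (2007) — exactly the identity $\int_{\R}(1-\cos(su))s^{-2}ds=\mathrm{const}\cdot|u|$ you mention as an alternative — arriving at the exact value $2(\E_X|X_j-X|-\E|X-X'|)\le 2(|X_j|+\E|X_1|)$. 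The paper's route buys a closed-form expression for the weighted $L^2$-norm (the energy-distance representation, which is conceptually useful elsewhere in the distance-covariance literature), whereas yours buys brevity and uses only the triangle inequality; for the bare finiteness claim of this lemma the two are equally adequate, and both establish it almost surely since the bound is random in either case. Your treatment of the double integral — factorizing $\omega(s,t)=\omega(s)\omega(t)$ and applying Tonelli — coincides with the paper's final step.
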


The proof of Lemma \ref{lemma:existence_1} is based on arguments that have been established in \cite{szekely:2007}. It can be found in the supplement.

\subsection{Main theorems }\label{subsec:main}

Based on observations $X_1, \ldots, X_n$ and $Y_1, \ldots, Y_n$ stemming from real-valued time series $(X_i)_{i\geq 1}$ and $(Y_i)_{i\geq 1}$,
our goal
is to decide on the testing problem
\begin{align*}
&\text{$H_0$:  $(X_i)_{i\geq 1}$ and  $(Y_i)_{i\geq 1}$ are independent,} \\
&\text{$H_1$:  $(X_i)_{i\geq 1}$ and  $(Y_i)_{i\geq 1}$ are dependent.} 
\end{align*}
As test statistic
 we propose a linear combination of empirical distance cross-covariances, i.e. 
 \begin{align*}
  \sum_{h=0}^\infty a_h \, \dcov_n(X,Y;h),
 \end{align*}
 where $(a_h)_{h\geq 0}$ is a summable, real-valued sequence of weights.

This section establishes a testing procedure  based on this statistic and  provides theoretical verification for its validity.
More precisely, we derive the test statistic's
 asymptotic distribution  under $H_0$ and, since the corresponding limit distribution is unknown, establish a subsampling procedure to determine critical values for a test decision. 
Moreover, 
 we show  that the test statistic diverges to $\infty$ under $H_1$ thereby establishing consistency  of the proposed test. 
In both cases, under $H_0$ and under $H_1$, our results 
are based on corresponding limit theorems for the empirical distance cross-covariance. Accordingly, the former are preceded by the latter.
 
Since we can represent the empirical distance cross-covariance as
\begin{align*}
\dcov_n(X, Y; h)
=\int_{\mathbb{R}}\int_{\mathbb{R}}\left|\varphi_{X, Y; h}^{(n)}(s, t)-\varphi_X^{(n)}(s)\varphi_Y^{(n)}(t)\right|^2w(s,t)dsdt, 
\end{align*}
we derive its asymptotic distribution from  a limit theorem
for
$\varphi_{X, Y; h}^{(n)}(s, t)-\varphi_X^{(n)}(s)\varphi_Y^{(n)}(t)$
as a random object taking values in  $L^2(\mathbb{R}^2, w(s,t)dsdt)$.

\begin{theorem}\label{thm:main_result}
Let $X_i=G_1(\xi_i)$, $i\geq 1$, and $Y_i=G_2(\eta_i)$, $i\geq 1$, where $(\xi_i)_{i\geq 1}$ and $(\eta_i)_{i\geq 1}$ are two independent,  stationary, long-range dependent   Gaussian 
processes with $\E\pr{\xi_1}=\E\pr{\eta_1}=0$, $\Var\pr{\xi_1}=\Var \pr{\eta_1}=1$, 
$\rho_{\xi}(k)=\Cov(\xi_1, \xi_{1+k})=k^{-D_{\xi}}L_{\xi}(k) \text{ and } \rho_{\eta}(k)=\Cov(\eta_1, \eta_{1+k})=k^{-D_{\eta}}L_{\eta}(k)$
for 
 $D_{\xi}, D_{\eta} \-\in (0, 1)$ and 
 slowly varying functions $L_{\xi}$ and $L_{\eta}$. Assume that $\E|X_1|<\infty$ and  $\E|Y_1|<\infty$.
\begin{itemize}
\item[(i)] If  $D_{\xi}, D_{\eta}
\in (\frac{1}{2},1)$, it holds that
 \begin{align*}
\sqrt{n}\pr{\varphi_{X, Y; h}^{(n)}(s, t)-\varphi_X^{(n)}(s)\varphi_Y^{(n)}(t)}
\overset{\mathcal{D}}{\longrightarrow}Z(s, t),
\end{align*}
where $\left(Z(s, t)\right)_{s, t\in \mathbb{R}}$ is a complex-valued Gaussian process  with location parameter $\mu=0$, covariance matrix
\begin{align}\label{eq:complex_normal_parameters}
\Gamma_{s,t, s', t'}&=\Cov\pr{Z(s, t), Z(s', t')}=\E\pr{Z(s, t) \overline{Z(s', t')}}\notag\\
&=\sum_{k=-\infty}^{\infty}
\E \pr{f_{s, t}(X_1, Y_1)\overline{f_{s', t'}(X_{k+1}, Y_{k+1})}},
\intertext{and relation matrix}
C_{s,t,  s', t'}&=\Cov\pr{Z(s, t), \overline{Z(s', t')}}=\E\pr{Z(s, t) Z(s', t')}\notag\\
&=\sum_{k=-\infty}^{\infty}
\E \pr{f_{s, t}(X_1, Y_1)f_{s', t'}(X_{k+1}, Y_{k+1})},
\end{align}
where
\begin{align*}
f_{s, t}(X_j, Y_j)\defeq \left(\exp(isX_j)-\varphi_X(s)\right)\left(\exp(itY_j)-\varphi_Y(t)\right). \notag
\end{align*}
\item[(ii)]  If $G_1=G_2=\text{id}$ and $D_{\xi}+D_{\eta}<1$, it holds
 that
 \begin{multline*}
n^{\frac{D_{\xi}+D_{\eta}}{2}}L_{\xi}^{-\frac{1}{2}}(n)L_{\eta}^{-\frac{1}{2}}(n)\pr{\varphi_{X, Y; h}^{(n)}(s, t)-\varphi_X^{(n)}(s)\varphi_Y^{(n)}(t)}
\overset{\mathcal{D}}{\longrightarrow}\\
\int_{\left[-\pi, \pi\right)^2}\left[\left(\frac{\e^{ix}-1}{ix}\right)\left(\frac{\e^{iy}-1}{iy}\right)-\frac{\e^{i(x+y)}-1}{i(x+y)}\right]
Z_{X}(dx)Z_{Y}(dy)\\
\times st\exp\left(-\frac{s^2+t^2}{2}\right),
 \end{multline*}
where $Z_{X}$ and $Z_{Y}$ are  random spectral measures defined  subsequently by \eqref{eq:random_spectral_measures}.
\end{itemize}
\end{theorem}

\begin{remark}
\begin{enumerate}
\item[(i)] Note that the results stated in Theorem \ref{thm:main_result} differ mar\-ked\-ly, depending on whether $D_\xi\geq 1/2$ and $D_\eta \geq 1/2$, or $D_\xi+D_\eta<1$. Both, the normalization as well as the limit distribution, are completely different for the two cases. In addition, in the case when $D_\xi+D_\eta<1$, our results only cover Gaussian processes, while in the other case, we can also treat subordinated Gaussian processes. Moreover,  note that Theorem~\ref{thm:main_result} does not cover the case  $D_\xi+D_\eta >1$, but requires either $D_\xi<1/2$ or $D_\eta<1/2$. 
\item[(ii)] In real-life data, one  typically encounters LRD coefficients that are larger than $1/2$, which corresponds to  Hurst coefficients smaller than $0.75$. Such data is covered by part (i) of Theorem \ref{thm:main_result}. Notably, this part of Theorem \ref{thm:main_result} not only applies to Gaussian time series, but to time series allowing for a representation as a subordinated Gaussian process in general.
\end{enumerate}
\end{remark}

An outline of a step-by-step proof 
of Theorem \ref{thm:main_result} through a number of auxiliary results is given in Section \ref{sec:outline_of_proofs}.

As an immediate consequence  of Theorem \ref{thm:main_result}, an application of the continuous mapping theorem establishes the limit distribution of the distance cross-covariance:
\begin{corollary}\label{cor:asymp_distr_dcov}
 Let $X_i=G_1(\xi_i)$, $i\geq 1$, and $Y_i=G_2(\eta_i)$, $i\geq 1$, where $(\xi_i)_{i\geq 1}$ and $(\eta_i)_{i\geq 1}$  are two independent,  stationary, long-range dependent   Gaussian 
processes with $\E\pr{\xi_1}=\E\pr{\eta_1}=0$, $\Var\pr{\xi_1}=\Var \pr{\eta_1}=1$, 
$\rho_{\xi}(k)=\Cov(\xi_1, \xi_{1+k})=k^{-D_{\xi}}L_{\xi}(k) \text{ and } \rho_{\eta}(k)=\Cov(\eta_1, \eta_{1+k})=k^{-D_{\eta}}L_{\eta}(k)$
for 
 $D_{\xi}, D_{\eta} \\\in (0, 1)$ and 
 slowly varying functions $L_{\xi}$ and $L_{\eta}$. 
  Assume that $\E|X_1|<\infty$ and  $\E|Y_1|<\infty$.
\begin{itemize}
\item[(i)] 
 If  $D_{\xi}, D_{\eta}\in (\frac{1}{2},1)$, it holds that
 \begin{align*}
n\dcov_n(X, Y; h)
\overset{\mathcal{D}}{\longrightarrow}
\int_{\mathbb{R}}\int_{\mathbb{R}}\left|Z(s, t)\right|^2w(s,t)dsdt,
\end{align*}
where $\left(Z(s, t)\right)_{s, t\in \mathbb{R}}$ is the complex-valued Gaussian process defined in Theorem \ref{thm:main_result}.
\item[(ii)] If $G=\text{id}$ and $D_{\xi}+D_{\eta}<1$, it holds
 that
 \begin{multline*}
n^{D_{\xi}+D_{\eta}}L_{\xi}^{-1}(n)L_{\eta}^{-1}(n)
\dcov_n(X, Y; h)
\overset{\mathcal{D}}{\longrightarrow}\\
\left|\int_{\left[-\pi, \pi\right)^2}\left[\left(\frac{\e^{ix}-1}{ix}\right)\left(\frac{\e^{iy}-1}{iy}\right)-\frac{\e^{i(x+y)}-1}{i(x+y)}\right]
Z_{X}(dx)Z_{Y}(dy)\right|^2\\
\times\int_{\mathbb{R}}\int_{\mathbb{R}}s^2t^2\exp\left(-s^2+t^2\right)w(s,t)dsdt,
 \end{multline*}
where $Z_{X}$ and $Z_{Y}$ are  random spectral measures defined  subsequently by \eqref{eq:random_spectral_measures}.
\end{itemize}
\end{corollary}

Since the proposed  test statistic $\sum_{h=0}^\infty a_h \dcov_n(X,Y;h)$ is a linear combination of the distance cross-covariances at all lags $h$,  its asymptotic distribution  cannot be derived 
from Corollary \ref{cor:asymp_distr_dcov}.
Instead, for this, 
 joint convergence of    distance cross-covariances at different lags
 is required.
A corresponding result is established by Proposition \ref{prop:convergence_SRD} in Section \ref{sec:outline_of_proofs}.
As a corollary 
of Proposition \ref{prop:convergence_SRD}, 
we obtain the following limit theorem for the test statistic.

\begin{theorem}\label{thm:test_statistic}
Let $X_i=G_1(\xi_i)$, $i\geq 1$, and $Y_i=G_2(\eta_i)$, $i\geq 1$, where $(\xi_i)_{i\geq 1}$ and $(\eta_i)_{i\geq 1}$  are two independent, stationary, long-range dependent   Gaussian 
processes with $\E\pr{\xi_1}=\E\pr{\eta_1}=0$, $\Var\pr{\xi_1}=\Var \pr{\eta_1}=1$, 
$\rho_{\xi}(k)=\Cov(\xi_1, \xi_{1+k})=k^{-D_{\xi}}L_{\xi}(k) \text{ and } \rho_{\eta}(k)=\Cov(\eta_1, \eta_{1+k})=k^{-D_{\eta}}L_{\eta}(k)$
for 
 $D_{\xi}, D_{\eta} \- \in (\frac{1}{2}, 1)$ and 
 slowly varying functions $L_{\xi}$ and $L_{\eta}$.  Assume that $\E|X_1|<\infty$ and  $\E|Y_1|<\infty$.
Given  a real-valued sequence $(a_h)_{h\geq 0}$ with $\sum_{h=0}^{\infty}|a_h|<\infty$ it holds that
 \begin{align*}
n\sum\limits_{h=0}^{\infty}a_h\dcov_n(X, Y; h)\overset{\mathcal{D}}{\longrightarrow}  \sum\limits_{h=0}^{\infty}a_hZ_h,
\end{align*}
where
\begin{align*}
Z_h\defeq \int_{\R}  \int_{\R} \big|Z(s, t; h)\big|^2 w(s,t) ds \,  dt,
\end{align*}
and $\left(Z(s, t; 0), Z(s, t; 1), \ldots, Z(s, t; H)\right)_{s, t\in \mathbb{R}}$ is the complex-valued Gaussian process defined in Theorem~\ref{thm:main_result}.
\end{theorem}

Theorem \ref{thm:test_statistic}
shows that the considered test statistic converges in distribution to a non-degenerate limit.
As the limit distribution is unknown, we base test decisions on a subsampling procedure. 
For the theoretical results on subsampling, we 
do not particularly focus on the proposed test, but we presuppose the following more general situation: Given observation $X_1, \ldots, X_n$
and $Y_1, \ldots, Y_n$
stemming from two independent time series $(X_i)_{i\geq 1}$ and $(Y_i)_{i\geq 1}$, 
 our goal is
to decide on the testing problem $(H_0, H_1)$.
For this purpose, we consider a test statistic  $T_n\defeq T_n(X_1, \ldots, X_n, Y_1, \ldots, Y_n)$, such that we intend to 
 approximate the distribution $F_{T_n}$ of $T_n$.

Therefore, 
the subsampling procedure has to be designed in such a way that it mimics the behavior of the test statistic for two independent time series regardless of whether the  data has been generated according to the model assumptions under the  hypothesis or under the alternative.
To not destroy the dependence structure of the individual time series, it seems reasonable to consider blocks of observations. 
For this, we define blocks
\begin{align*}
B_{k, l_n}\defeq (X_k, \ldots, X_{k+l_n-1}), \ C_{k, l_n}\defeq (Y_k, \ldots, Y_{k+l_n-1})
\end{align*}
based on a predefined block length $l_n$, where  $l_n\leq n$.
To mimic the behavior of two independent time series, it seems reasonable to compute the distance cross-covariance of blocks that are far apart. 
For this reason, we compute the test statistic on blocks that are separated by a lag $d_n$, i.e.  we compute
\begin{align*}
T_{l_n, k}\defeq T_{l_n}\left(X_k, \ldots, X_{k+l_n-1}, Y_{k+d_n}, \ldots, Y_{k+d_n+l_n-1}\right),  \ k=1, \ldots, m_n,
\end{align*}
 where $m_n\defeq n-l_n-d_n$ with $d_n\leq n-l_n$. As a result, we obtain multiple (though dependent) realizations of the test statistic $T_{l_n}$.
Due to the fact that 
 consecutive observations are chosen, the subsamples
retain the dependence structure of the original sample, so that 
the empirical distribution function of  $T_{l_n, 1}, \ldots, T_{l_n, m_n}$, defined by
\begin{align}\label{eq:subsampling_estimator}
\widehat{F}_{m_n,  l_n}(t)\defeq\frac{1}{m_n}\sum\limits_{k=1}^{m_n}1_{\left\{T_{l_n, k}\leq t\right\}},
\end{align}
can be considered as an appropriate estimator for $F_{T_n}$.
 
In order  to establish the validity of the subsampling procedure, i.e. in order to show that the empirical distribution function of  $T_{l_n, 1}, \ldots, T_{l_n, m_n}$ can be considered as a suitable approximation of $F_{T_n}$, we aim at proving that the distance between $\widehat{F}_{m_n,  l_n}$ and  $F_{T_n}$
vanishes as the number of observations tends to $\infty$.
For this, we have to make the following technical assumptions:

\begin{assumption}\label{ass:spectral_density_2}
Let $(\xi_k)_{k\geq 1}$  denote a stationary, long-range dependent Gaussian process  with   $\E(\xi_1)=0$,  $\Var(\xi_1)=1$, LRD parameter $D$ and spectral density  $f(\lambda)=~|\lambda|^{D-1}L_f(\lambda)$ for a slowly varying function $L_f$ which is bounded away from $0$ on $\left[0, \pi\right]$. Moreover, assume that $\lim_{\lambda\rightarrow0}L_f(\lambda)\in(0,\infty]$ exists.
\end{assumption}

\begin{assumption}\label{ass:covariance_function}
Let $(\xi_k)_{k\geq 1}$  denote a stationary, long-range dependent Gaussian process  with   $\E(\xi_1)=0$,  $\Var(\xi_1)=1$,  and   covariance function
\begin{align*}
\rho(k)\defeq~\Cov(\xi_1,\xi_{k+1})=~k^{-D}L_{\rho}(k)
\end{align*}
for some parameter $D\in \left(0, 1\right)$ and some slowly varying function $L_{\rho}$. Assume that there exists a constant $K\in (0, \infty)$, such that for all $n\in \mathbb{N}$
\begin{align*} 
\max\limits_{n+1\leq j\leq n+2l-2}\left|L_{\rho}(n)-L_{\rho}(j)\right|\leq K\frac{l}{n}\min\left\{L_\rho(n),1\right\}
\end{align*}
for  $l\in\{l_n,\ldots,n\}$, where $l_n$ denotes the block length.
\end{assumption}

Given Assumptions \ref{ass:spectral_density_2} and \ref{ass:covariance_function}, consistency of the subsampling procedure is established by the following theorem:

\begin{theorem}\label{thm:subsampling}
Given two independent, stationary, subordinated Gaussian LRD   time series $(X_i)_{i\geq 1}$ and $(Y_i)_{i\geq 1}$  satisfying Assumptions \ref{ass:spectral_density_2} and \ref{ass:covariance_function} with LRD parameters $D_X$ and $D_Y$ and  (measurable) statistics $T_n=T_n(X_1,\ldots,X_n, Y_1, \ldots, Y_n)$  that converge in distribution to a (non-degenerate) random variable $T$. Let $F_T$ and $F_{T_n}$ denote the distribution functions of $T$ and $T_n$.
Moreover, let $(l_n)_{n\geq 1}$ be an increasing, divergent series of integers. If \textcolor{black}{$l_n=\mathcal{O}\left(n^{(1+\min(D_X, D_Y))/2 -\varepsilon}\right)$}  for some $\varepsilon>0$, 
then
\begin{align*}
\left|\widehat{F}_{m_n,  l_n}(t)-F_{T_n}(t)\right|\overset{\mathcal{P}}{\longrightarrow}0,  \ \ \text{as $n\rightarrow\infty$,}
\end{align*}
 for all points of continuity $t$ of $F_T$, i.e. the sampling-window method  is consistent.
\end{theorem}

\begin{remark}
If, additionally, $F_T$ is continuous,
the usual Glivenko-Cantelli argument for  uniform convergence of empirical distribution functions implies that
\begin{align*}
\sup\limits_{t\in \mathbb{R}}\left|\widehat{F}_{m_n,  l_n}(t)-F_{T_n}(t)\right|
\overset{\mathcal{P}}{\longrightarrow}0, \   \ \text{as $n\rightarrow\infty$.}
\end{align*}
\end{remark}

The proof of Theorem \ref{thm:subsampling} is based on arguments that have been established in \cite{betken:wendler:2016}. It can be found in the supplement.

The previous results analyze the asymptotic behaviour of the empirical distance cross-covariance computed on the basis of two independent time series.
The following results characterize the  statistic under more general assumptions. More precisely, we establish consistency of the empirical distance cross-covariance as an approximation of the  distance cross-covariance for stationary, ergodic bivariate processes and derive consistency of the proposed testing procedure for these processes under the additional assumption that for some lag $h$ the random variables $X_j$ and $Y_{j+h}$ are dependent.

\begin{theorem}\label{thm:consistency_dcov_n}
Assume that $(X_j,Y_j)_{j\geq 1} $ is a stationary ergodic process with   $\E|X_1|<\infty$ and  $\E|Y_1|<\infty$. Then, as $n\rightarrow \infty$,
\begin{align*}
 \dcov_n(X,Y;h) \longrightarrow \dcov(X,Y; h)
\end{align*}
almost surely. 
\end{theorem}

Theorem \ref{thm:consistency_dcov_n} can be directly derived from an ergodic theorem for Hilbert space-valued random variables; see the supplement to this manuscript.
An alternative proof based on $U$-statistic theory can be found in \cite{kroll:2020}. 
As an immediate  consequence of Theorem \ref{thm:consistency_dcov_n}, we obtain consistency of the proposed hypothesis test for a broad class of alternatives:

\begin{theorem}\label{thm:distr_test_statistic}
Let $(X_j,Y_j)_{j\geq 1}$ be a stationary ergodic process  $\E|X_1|<\infty$ and  $\E|Y_1|<\infty$, and such that, for some lag $h$, the  random variables $X_j$ and $Y_{j+h}$ are dependent.   Given a real-valued sequence $(a_h)_{h\geq 0}$ with $a_h\neq 0$, as $n\rightarrow \infty$,
\[
 n\, \sum_{h=0}^\infty a_h \dcov_n(X,Y;h) \rightarrow \infty. 
\]
\end{theorem}

\begin{remark} Preliminary calculations show that our test is able to detect deviations from independence of the order $n^{-\frac{1}{2}}$ when $D_\xi > \frac{1}{2}, D_\eta>\frac{1}{2}$, and of order $n^{1-\frac{D_\xi+D_\eta}{2}}$ when $D_\xi+D_\eta<1$. The same rates apply for tests based on empirical cross-covariances as considered in Theorem \ref{thm:sample_covariance}.
\end{remark}

\subsection{A non-central limit theorem for Hilbert space-valued LRD processes}
\label{subsec:CLT}

We aim
at basing our theoretical results on 
limit theorems for Hilbert space-valued random elements. To this end, recall that
for a measure space $\pr{S, \mathcal{S}, \mu}$ the set
\[\mathcal{L}^2\pr{S, \mu}\defeq\left\{f:S\longrightarrow\mathbb{C} \ \text{measurable}\left|\right. \|f\|_{2}<\infty\right\},\] 
equipped with $\|\cdot\|_{2}$, where
  $\|f\|_{2}\defeq\left(\int_{S}\left|f\right|^2d\mu\right)^{\frac{1}{2}}$,
is a semi-normed vector space.
The quotient space 
$L^{2}(S,  \mu)\defeq \mathcal{L}^{2}(S,  \mu)/{\mathcal {N}} \
\text{with} \
\mathcal{N}\defeq \text{ker}(\|\cdot\|_{2})$,
equipped with $\|\cdot\|_{2}$, is then a normed vector space and, in particular, a Hilbert space.

For the analysis of the distance cross-covariance of two random vectors $X$ and $Y$, we note that 
$\dcov_n(X, Y; h)
=\|\varphi_{X, Y; h}^{(n)}-\varphi_X^{(n)}\varphi_Y^{(n)}\|_{2}^2$, where
\begin{align*}
\|f\|_{2}^2=\int_{\mathbb{R}}\int_{\mathbb{R}}\left|f(s, t)\right|^2w(s,t)dsdt
\end{align*}
for $f\in L^{2}(\mathbb{R}^2,  w(s, t)dsdt)$.

In order to derive the asymptotic distribution of  $\dcov_n(X, Y; h)$,  recall that
\begin{align}\label{eq:decomposition_3}
&\varphi_{X, Y; h}^{(n)}(s, t)-\varphi_X^{(n)}(s)\varphi_Y^{(n)}(t)\\
=&-\left(\varphi_X^{(n)}(s)-\varphi_X(s)\right)\left(\varphi_Y^{(n)}(t)-\varphi_Y(t)\right)\notag\\
&+\frac{1}{n}\sum\limits_{j=1}^{n-h}\left(\exp(isX_j)-\varphi_X(s)\right)\left(\exp(itY_{j+h})-\varphi_Y(t)\right) +o(n^{-\gamma})\notag
\end{align}
for any $\gamma<1$.
 According to Lemma \ref{lemma:existence_1} we can  consider $\varphi_X^{(n)}(s)-\varphi_X(s)$ and $\varphi_Y^{(n)}(t)-\varphi_Y(t)$ 
as elements of  $L^{2}(\mathbb{R},  w(s)ds)$, where $w(s)\defeq (cs^2)^{-1}$,
and $\left(\varphi_Y^{(n)}(t)-\varphi_Y(t)\right)\left(\varphi_X^{(n)}(s)-\varphi_X(s)\right)$ as an element of $L^{2}(\mathbb{R}^2,  w(s, t)dsdt)$.
For an analysis of these terms,  we thus 
establish a non-central limit theorem for processes with values in an $L^2$-Hilbert space. 
Against the background of analyzing the  distance cross-covariance of time series, this limit theorem
provides a basis for deriving the asymptotic distribution of the  distance cross-covariance of subordinated Gaussian processes. 
Yet, the limit theorem is  not specially geared to this problem and can thus be considered of  independent interest.

\begin{theorem}\label{thm:1_parameter}
 Let  $(X_i)_{i\geq 1}$ be a  stationary, long-range dependent Gaussian 
process with $\E X_1=0$, $\Var \pr{X_1}=1$, and auto-covariance function
$\rho(k)\sim k^{-D}L(k)$
for the LRD parameter $D\in (0,1)$ and a slowly varying function $L$.
 Given a positive weight function $w:~\mathbb{R}\longrightarrow\mathbb{R}_+$, consider the Hilbert space $S\defeq L^2\left(\mathbb{R}, w(t)dt\right)$. Let 
  $f:\mathbb{R}\longrightarrow S$ map $x\in \mathbb{R}$ to the function $t\mapsto f_t(x)$ with $(t, x)\mapsto f_t(x)$ measurable and $f_t\in L^2(\mathbb{R}, \varphi(x)dx)$ for all $t\in \mathbb{R}$.
Moreover, assume that  the  LRD parameter $D$ meets the condition $0<D<~\frac{1}{r}$, where $r$ denotes the Hermite rank of the  class of functions $\left\{f_t(X_1)-\E f_t(X_1), \ t\in \mathbb{R}\right\}$, i.e. 
\begin{align*}
r\defeq~\min \left\{q\geq 1: J_q(t)\neq 0 \ \text{for  some} \ t\in \mathbb{R}\right\}, \ J_{q}(t)=~\E \left(f_t(X_1)H_q(X_1)\right). 
\end{align*} 
If $\E\left(\|f_{X}\|_{2}^2\right)<~\infty$, where $f_X(t)\defeq f_t(X_1)$, then
\begin{align*}
\left\| n^{\frac{rD}{2}-1}L^{-\frac{r}{2}}(n)\sum\limits_{j=1}^n\left[\left(f_{t}(X_j)-\E f_{t}(X_j) \right)-\frac{1}{r!}J_r(t)H_r(X_j)\right]\right\|_{2}=o_P\left(1\right).
\end{align*}
Moreover, it follows that
\begin{align*}
n^{\frac{rD}{2}-1}L^{-\frac{r}{2}}(n)\sum\limits_{j=1}^n\left(f_t(X_j)-\E f_t(X_j) \right)\overset{\mathcal{D}}{\longrightarrow} \frac{1}{r!}J_r(t)Z_{r, H} (1), \ t \in \mathbb{R},
\end{align*}
where
 $Z_{r, H}$ denotes an $r$-th order Hermite process with  $H=1-\frac{rD}{2}$, and   $\overset{\mathcal{D}}{\longrightarrow}$ denotes  convergence in distribution in $L^2\left(\mathbb{R}, w(t)dt\right)$.
\end{theorem}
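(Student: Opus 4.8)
\emph{Proof plan.} The plan is to prove the two displayed assertions in turn: first the $L^2$-reduction of the normalized sum to its leading Hermite term (which immediately yields the $o_P(1)$ claim), and then to read off the weak limit in $S:=L^2(\mathbb{R},\omega(t)dt)$ from the classical scalar non-central limit theorem combined with a continuous-mapping/Slutsky argument in the Hilbert space. To begin, for each fixed $t$ I would expand $f_t(X_1)-\E f_t(X_1)$ in Hermite polynomials in $L^2(\mathbb{R},\varphi(x)dx)$; since the Hermite rank of the family $\{f_t(X_1)-\E f_t(X_1):t\in\mathbb{R}\}$ equals $r$, all coefficients $J_q(t)$ with $1\le q<r$ vanish identically in $t$, so
\[
f_t(X_j)-\E f_t(X_j)=\frac{J_r(t)}{r!}H_r(X_j)+R_t(X_j),\qquad R_t(x):=\sum_{q>r}\frac{J_q(t)}{q!}H_q(x),
\]
with the series converging in $L^2(\mathbb{R},\varphi(x)dx)$ for every $t$. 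The first display is precisely the assertion that $n^{\frac{rD}{2}-1}L^{-\frac r2}(n)\sum_{j=1}^n R_\cdot(X_j)\to0$ in probability in $S$, and since convergence in quadratic mean implies convergence in probability, it suffices to control the second moment.

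\emph{The variance estimate (the crux).} Using Tonelli and the orthogonality relations $\E[H_p(X_i)H_q(X_j)]=\delta_{pq}\,q!\,\rho(i-j)^q$ (together with $L^2$-continuity of the covariance and $|\rho|\le1$ to justify the term-by-term expansion of $\E[R_t(X_i)R_t(X_j)]$), I obtain
\[
\E\Bigl\|\,\sum_{j=1}^n R_\cdot(X_j)\Bigr\|_2^2=\int_{\mathbb{R}}\ \sum_{q>r}\frac{J_q(t)^2}{q!}\ \sum_{i,j=1}^n\rho(i-j)^q\ \omega(t)\,dt .
\]
Since $|\rho(k)|\le\rho(0)=1$, one has $|\rho(i-j)|^q\le|\rho(i-j)|^{r+1}$ for all $q\ge r+1$, so $\bigl|\sum_{i,j}\rho(i-j)^q\bigr|\le S_n:=\sum_{i,j=1}^n|\rho(i-j)|^{r+1}$ uniformly in $q$; pulling $S_n$ out leaves the $t$-integral $\int_{\mathbb{R}}\sum_{q>r}\frac{J_q(t)^2}{q!}\omega(t)dt\le\int_{\mathbb{R}}\Var(f_t(X_1))\,\omega(t)dt\le\E(\|f_X\|_2^2)<\infty$, so $\E\|\sum_{j\le n}R_\cdot(X_j)\|_2^2\le C\,S_n$ for a finite constant $C$. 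It then remains to check $n^{rD-2}L^{-r}(n)S_n\to0$: writing $S_n=\sum_{|k|<n}(n-|k|)|\rho(k)|^{r+1}$, Karamata's theorem gives $S_n\sim c_r\,n^{2-(r+1)D}L(n)^{r+1}$ when $(r+1)D<1$ and $S_n=O(n\,\ell(n))$ for some slowly varying $\ell$ when $(r+1)D\ge1$; in both cases $rD<1$ forces $n^{rD-2}L^{-r}(n)S_n\to0$. This proves the first display, hence the $o_P(1)$ statement and convergence in probability in $S$.

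\emph{The limit law.} By the reduction just established, $n^{\frac{rD}{2}-1}L^{-\frac r2}(n)\sum_{j=1}^n(f_t(X_j)-\E f_t(X_j))=A_n\,g+o_P(1)$ in $S$, where $A_n:=n^{\frac{rD}{2}-1}L^{-\frac r2}(n)\sum_{j=1}^n H_r(X_j)$ is a scalar random variable and $g:=J_r(\cdot)/r!$ is a fixed element of $S$, since $\|g\|_2^2=\frac1{r!}\int_{\mathbb{R}}\frac{J_r(t)^2}{r!}\omega(t)dt\le\frac1{r!}\E(\|f_X\|_2^2)<\infty$. The classical non-central limit theorem for partial sums of a single Hermite polynomial of a long-range dependent Gaussian sequence (see, e.g., \cite{pipiras:taqqu:2017, beran:feng:ghosh:kulik:2013}), which applies because $\rho(k)\sim k^{-D}L(k)$ with $rD<1$, gives $A_n\overset{\mathcal{D}}{\longrightarrow}Z_{r,H}(1)$ with $H=1-\frac{rD}{2}$ under the normalization matching the statement. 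Since $a\mapsto a\,g$ is continuous from $\mathbb{R}$ to $S$, the continuous mapping theorem yields $A_n g\overset{\mathcal{D}}{\longrightarrow}Z_{r,H}(1)\,g=\frac1{r!}J_r(\cdot)\,Z_{r,H}(1)$ in $S$, and Slutsky's lemma absorbs the $o_P(1)$ remainder; this is the second display.

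\emph{Main obstacle.} The only genuinely delicate point is the variance estimate: the remainder carries an infinite Hermite series and one must control it uniformly in the order $q$ while simultaneously integrating over $t\in\mathbb{R}$ against the weight $\omega$. The elementary bound $|\rho|\le1$ decouples the dependence on $q$ from the covariance sum $\sum_{i,j}\rho(i-j)^q$, after which Tonelli and the hypothesis $\E(\|f_X\|_2^2)<\infty$ close the estimate; the scalar non-central limit theorem and the Hilbert-space continuous-mapping/Slutsky step are then routine.
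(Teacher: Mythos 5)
Your proposal is correct and follows essentially the same route as the paper's proof: Hermite expansion of $f_t$, a Tonelli/orthogonality computation of the second moment of the remainder, the uniform bound $|\rho(i-j)|^q\le|\rho(i-j)|^{r+1}$ for $q\ge r+1$ combined with the Karamata-type estimate $\sum_{i,j\le n}|\rho(i-j)|^{r+1}=\mathcal{O}(n^{1\vee(2-(r+1)D)}L'(n))$ (which the paper quotes from Dehling and Taqqu), the bound $\sum_{q>r}|J_q(t)|^2/q!\le\E|f_t(X_1)|^2$ integrated against $\omega$, and finally the scalar non-central limit theorem of Taqqu/Dobrushin--Major together with the continuous-mapping/Slutsky step in the Hilbert space. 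No gaps.
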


 Theorem \ref{thm:1_parameter}
allows to characterize the asymptotic behavior of the empirical characteristic function through the following corollary:

\begin{corollary}\label{cor:convergence_of_ecf}
Let $X_i=G(\xi_i)$, $i\geq 1$, where $(\xi_i)_{i\geq 1}$ is a  stationary, long-range dependent   Gaussian 
process with $\E\pr{\xi_1}=0$, $\Var\pr{\xi_1}=1$,  and
$\rho(k)=\Cov(\xi_1, \xi_{1+k})=k^{-D}L(k)$
for 
 $D \in (0, 1)$ and  a
 slowly varying function $L$.
 Given a positive weight function $w:~\mathbb{R}\longrightarrow\mathbb{R}_+$, consider the Hilbert space $S\defeq L^2\left(\mathbb{R}, w(t)dt\right)$.
Moreover, let $f_{s, 1}(x)=\cos(sG(x))$, $f_{s, 2}(x)=\sin(sG(x))$, and let $r_1$ and $r_2$ denote the corresponding Hermite ranks, i.e. $r_i:=\min\{q\geq 1:J_{q, i}(s)\neq 0 \ \text{for some $s\in \mathbb{R}$}\}$, where $J_{q, i}(s)=\E\left(f_{s, i}(\xi_1)H_{q}(\xi_1)\right), \ i=1, 2$. Assume that $0< D<\frac{1}{r_i}$, $i=1, 2$.
Then, we have
\begin{align*}
&n^{\frac{r_1 D}{2}}L^{-\frac{r_1}{2}}(n)\left\|\Ret\left(\varphi_X^{(n)}(s)-\varphi_X(s)\right)-\frac{1}{r_1!}J_{r_1, 1}(s)\frac{1}{n}\sum\limits_{i=1}^nH_{r_1}(\xi_i)\right\|_{2}=o_P\left(1\right),\\
&n^{\frac{r_2 D}{2}}L^{-\frac{r_2}{2}}(n)\left\|\Imt\left(\varphi_X^{(n)}(s)-\varphi_X(s)\right)-\frac{1}{r_2!}J_{r_2, 2}(s)\frac{1}{n}\sum\limits_{i=1}^nH_{r_2}(\xi_i)\right\|_{2}=o_P\left(1\right).
\end{align*}
Moreover, when $G=\text{id}$, it follows that
\begin{align*}
&n^{\frac{D}{2}}L^{-\frac{1}{2}}(n)\Ret\left(\varphi_X^{(n)}(s)-\varphi_X(s)\right)\overset{\mathcal{D}}{\longrightarrow}0, \; s\in \mathbb{R}, \\
\intertext{while}
&n^{\frac{D}{2}}L^{-\frac{1}{2}}(n)\Imt\left(\varphi_X^{(n)}(s)-\varphi_X(s)\right)\overset{\mathcal{D}}{\longrightarrow}\text{$c_D$}s\exp\left(-\frac{s^2}{2}\right)Z, \; s\in \mathbb{R}, 
\end{align*}
where  $Z$ is a standard normally distributed random variable and $c_D=\sqrt{\frac{2}{(1-D)(2-D)}}$. 
\end{corollary}

\begin{remark}
For $G=\text{id}$ the assertion of
Corollary \ref{cor:convergence_of_ecf} follows from
\begin{align*}
\E\left(\cos(s\xi_1)\xi_1\right)=0 \ \text{and} \
\E\left(\sin(s \xi_1)\xi_1\right)
=\exp\left(-\frac{s^2}{2}\right)s.
\end{align*}
Note that, since $w(s)=(cs^2)^{-1}$,
\begin{align*}
\int\left|\exp\left(-\frac{s^2}{2}\right)s\right|^2w(s)ds<\infty,
\end{align*}
such that the limit in the above corollary takes values in $L^2\left(\mathbb{R}, w(s)ds\right)$.
\end{remark}

\section{Outline of proofs and auxiliary results}\label{sec:outline_of_proofs}

In the following, we outline a step-by-step proof 
of Theorem \ref{thm:main_result}.
For this purpose, recall that
\begin{align}\label{eq:decomposition_2}
&\varphi_{X, Y; h}^{(n)}(s, t)-\varphi_X^{(n)}(s)\varphi_Y^{(n)}(t)\\
=&-\left(\varphi_X^{(n)}(s)-\varphi_X(s)\right)\left(\varphi_Y^{(n)}(t)-\varphi_Y(t)\right)\notag\\
&+\frac{1}{n}\sum\limits_{j=1}^{n-h}\left(\exp(isX_j)-\varphi_X(s)\right)\left(\exp(itY_{j+h})-\varphi_Y(t)\right) +o(n^{-\gamma})\notag
\end{align}
for any $\gamma<1$.
Under the assumption of independence or short-range dependence within the sequences  $(X_i)_{i\geq 1}$ and $(Y_i)_{i\geq 1}$ 
$$
\sqrt{n}\left(\varphi_X^{(n)}(s)-\varphi_X(s)\right)\left(\varphi_Y^{(n)}(t)-\varphi_Y(t)\right)=o_P(1),$$
i.e. with a corresponding normalization the first summand on the right-hand side of the above equation is asymptotically negligible, while the second summand determines the asymptotic distribution of the left-hand side.
For long-range dependent time series  $(X_i)_{i\geq 1}$ and  $(Y_i)_{i\geq 1}$ the asymptotic behavior of the second summand depends on 
an interplay of dependence within the time series, such that both summands may contribute to the limit distribution.
Accordingly, 
 we take account of both summands for our analysis.
For this, we consider the two summands separately and state
corresponding intermediate results.
Detailed proofs of these
are left to the supplement.\\

For the first summand, we prove the following result:

\begin{proposition}\label{prop:reduction}
Let $(X_i)_{i\geq 1}$ and $(Y_i)_{i\geq 1}$ be two independent,  stationary, long-range dependent Gaussian 
processes with $\E\pr{X_1}=\E\pr{Y_1}=0$, $\Var\pr{X_1}=\Var\pr{Y_1}=1$, 
$\rho_X(k)=\Cov(X_1, X_{1+k})=k^{-D_X}L_X(k)$ and $\rho_Y(k)=\Cov(Y_1, Y_{1+k}) = k^{-D_Y}L_Y(k)
$ for $D_X, D_Y \in (0, 1)$ and slowly varying functions $L_X$ and $L_Y$. 
Then, it holds
 that
 \begin{multline}\label{eq:reduction_prod}
 \left\|\left(\varphi_X^{(n)}(s)-\varphi_X(s)\right)\left(\varphi_Y^{(n)}(t)-\varphi_Y(t)\right)-
 J_1(s)\frac{1}{n}\sum\limits_{j=1}^nX_j J_1(t)\frac{1}{n}\sum\limits_{j=1}^nY_j 
 \right\|_{2}\\
 =o_P\left(n^{-\frac{D_X+D_Y}{2}}L_X^{\frac{1}{2}}(n)L_Y^{\frac{1}{2}}(n)\right),
 \end{multline}
 where $J_1(s)=i\exp\left(-\frac{s^2}{2}\right)s$.
 Moreover, it follows that 
 \begin{align*}
n^{\frac{D_X+D_Y}{2}}L_X^{-\frac{1}{2}}(n)L_Y^{-\frac{1}{2}}(n)\left(\varphi_X^{(n)}(s)-\varphi_X(s)\right)\left(\varphi_Y^{(n)}(t)-\varphi_Y(t)\right)
\overset{\mathcal{D}}{\longrightarrow}st\exp\left(-\frac{s^2+t^2}{2}\right)Z_XZ_Y,
 \end{align*}
where $Z_X$, $Z_Y$ are independent standard normally distributed random variables.  
 \end{proposition}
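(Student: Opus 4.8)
The plan is to deduce both assertions from Corollary \ref{cor:convergence_of_ecf}, which provides the Hermite reduction of $\varphi_X^{(n)}(s)-\varphi_X(s)$ (and symmetrically for $Y$) together with the joint convergence of the leading term. First I would write the product $\bigl(\varphi_Y^{(n)}(t)-\varphi_Y(t)\bigr)\bigl(\varphi_X^{(n)}(s)-\varphi_X(s)\bigr)$ and insert the decomposition $\varphi_X^{(n)}(s)-\varphi_X(s)=J_1(s)\,\bar X_n+R_n^X(s)$, where $\bar X_n\defeq\frac1n\sum_{j=1}^nX_j$ and $R_n^X(s)$ is the remainder with $n^{D_X/2}L_X^{-1/2}(n)\|R_n^X\|_2=o_P(1)$ by Corollary \ref{cor:convergence_of_ecf}; similarly for $Y$. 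Expanding the product gives four terms: the main term $J_1(s)\bar X_n\,J_1(t)\bar Y_n$, two mixed terms of the form $J_1(s)\bar X_n\,R_n^Y(t)$ and $R_n^X(s)\,J_1(t)\bar Y_n$, and the remainder--remainder term $R_n^X(s)R_n^Y(t)$. The claim \eqref{eq:reduction_prod} is that the last three terms, measured in $\|\cdot\|_2$ on $L^2(\mathbb{R}^2,\omega(s,t)\,dsdt)$, are $o_P\bigl(n^{-(D_X+D_Y)/2}L_X^{1/2}(n)L_Y^{1/2}(n)\bigr)$.

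The key point is that the $L^2(\mathbb{R}^2)$-norm of a product $g(s)h(t)$ factorizes: $\|g(s)h(t)\|_2=\|g\|_{2}\,\|h\|_{2}$ since $\omega(s,t)=\omega(s)\omega(t)$ up to the constant $c$. Hence for the remainder--remainder term, $\|R_n^X(s)R_n^Y(t)\|_2=\|R_n^X\|_2\,\|R_n^Y\|_2$, and multiplying by the normalization yields $\bigl(n^{D_X/2}L_X^{-1/2}(n)\|R_n^X\|_2\bigr)\bigl(n^{D_Y/2}L_Y^{-1/2}(n)\|R_n^Y\|_2\bigr)=o_P(1)\cdot o_P(1)=o_P(1)$. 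For a mixed term such as $J_1(s)\bar X_n\,R_n^Y(t)$, the factorized norm is $\|J_1\|_{2}\,|\bar X_n|\,\|R_n^Y\|_2$; here $\|J_1\|_{2}=\bigl(\int|\exp(-s^2/2)s|^2\omega(s)\,ds\bigr)^{1/2}<\infty$ by the remark following Corollary \ref{cor:convergence_of_ecf}, and $n^{D_X/2}L_X^{-1/2}(n)|\bar X_n|=O_P(1)$ because $n^{D_X/2-1}L_X^{-1/2}(n)\sum_{j=1}^nX_j$ converges in distribution (the $r=1$ Hermite process limit, i.e.\ fractional Gaussian noise limit), so after the full normalization this term is $O_P(1)\cdot o_P(1)=o_P(1)$. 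The symmetric mixed term is handled identically. This establishes \eqref{eq:reduction_prod}.

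For the second assertion, I would apply the normalization $n^{(D_X+D_Y)/2}L_X^{-1/2}(n)L_Y^{-1/2}(n)$ to the main term $J_1(s)\bar X_n\,J_1(t)\bar Y_n=-st\exp(-(s^2+t^2)/2)\,\bar X_n\bar Y_n$, giving $-st\exp(-(s^2+t^2)/2)\bigl(n^{D_X/2-1}L_X^{-1/2}(n)\sum_jX_j\bigr)\bigl(n^{D_Y/2-1}L_Y^{-1/2}(n)\sum_jY_j\bigr)$. Since $X_k$ and $Y_k$ are independent processes, the two scalar factors are independent and each converges in distribution to a standard normal $Z_X$, resp.\ $Z_Y$ (the Hermite process of order one at time one with the given slowly varying normalization is Gaussian); hence the scalar product converges jointly to $Z_XZ_Y$. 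Convergence of the scalar pair together with the fixed deterministic Hilbert-space element $-st\exp(-(s^2+t^2)/2)\in L^2(\mathbb{R}^2,\omega\,dsdt)$ (finiteness of its norm is again immediate) upgrades to weak convergence in $L^2(\mathbb{R}^2,\omega(s,t)\,dsdt)$ of the product, and combining with \eqref{eq:reduction_prod} via Slutsky's theorem in the Hilbert space gives the stated limit. The main obstacle is a slightly delicate measurability/joint-convergence bookkeeping: one must make sure the convergence in Corollary \ref{cor:convergence_of_ecf} holds jointly with the convergence of $\bar X_n$ (it does, since both are continuous functionals of the same partial-sum process), and that the factorization of the $L^2(\mathbb{R}^2)$-norm is applied only to genuinely product-form elements, which is exactly the structure produced by the decomposition above.
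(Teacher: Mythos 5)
Your proposal is correct and follows essentially the same route as the paper: decompose the product via the Hermite reduction of Corollary \ref{cor:convergence_of_ecf}, exploit that $\omega(s,t)=\omega(s)\omega(t)$ so the $L^2(\mathbb{R}^2,\omega(s,t)\,ds\,dt)$-norm of each product-form term factorizes, bound the cross and remainder terms, and then pass the main term $J_1(s)\bar X_n J_1(t)\bar Y_n$ to the limit using independence of the two partial sums. In fact your expansion $BA-ab=R_n^XR_n^Y+a R_n^Y+b R_n^X$ (with $A=a+R_n^X$, $B=b+R_n^Y$) is the algebraically exact version of the paper's decomposition, and it is what actually delivers the claimed $o_P$ rate: your mixed terms carry a remainder factor and hence an $o_P(1)$ after normalization, whereas the paper's cross terms are written with the full differences $\varphi_Y^{(n)}-\varphi_Y$ and are only shown to be $\mathcal{O}_P$ of the target rate. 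So your argument is, if anything, the cleaner one; no gaps.
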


 The asymptotic behavior of the second summand in the decomposition in formula
\eqref{eq:decomposition_2} depends on the values of the long-range dependence parameters $D_X$ and $D_Y$.
For this reason, the following propositions treat different values of these separately.
Initially, we consider small values of $D_X$ and $D_Y$. Following this, we focus on bigger values of $D_X$ and $D_Y$.

\begin{proposition}\label{prop:reduction_LRD_2}
Let  $(X_i)_{i\geq 1}$ and $(Y_i)_{i\geq 1}$ be two independent,  stationary, long-range dependent Gaussian 
processes with $\E\pr{X_1}=\E\pr{Y_1}=0$, $\Var \pr{X_1}=\Var\pr{Y_1}=1$,  $\rho_X(k)=k^{-D_X}L_X(k)$, and $\rho_Y(k)=k^{-D_Y}L_Y(k)$ for $D_X+D_Y \in (0, 1)$ and  slowly varying functions $L_X$, $L_Y$.
Then, it holds that
\begin{multline*}
\Biggl\|\frac{1}{n}\sum\limits_{j=1}^{n-h}\left(\exp(isX_j)-\varphi_X(s)\right)\left(\exp(itY_{j+h})-\varphi_Y(t)\right)
\\
+\exp\left(-\frac{s^2}{2}\right)s\exp\left(-\frac{t^2}{2}\right)t\frac{1}{n}\sum\limits_{j=1}^{n-h}X_jY_{j+h}\Biggr\|_{2}=o_P\left(n^{-\frac{D_X+D_Y}{2}}L_X^{\frac{1}{2}}(n)
L_Y^{\frac{1}{2}}(n)\right).
\end{multline*}
\end{proposition}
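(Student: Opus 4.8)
The plan is to run, for this bivariate integrand, the same Hermite‑reduction argument as in the proof of Theorem~\ref{thm:1_parameter}, exploiting the independence of the two processes so that the relevant expansion factorizes. First I would write $\varphi_X(s)=\e^{-s^2/2}$, $\varphi_Y(t)=\e^{-t^2/2}$ and set $f_s(x)\defeq\e^{isx}-\e^{-s^2/2}$, so that $\E f_s(X_1)=0$ and $\E|f_s(X_1)|^2=1-\e^{-s^2}<\infty$ for every $s$. From $\E\pr{\e^{isX_1}H_p(X_1)}=(is)^p\e^{-s^2/2}$ the Hermite coefficients are $J_p(s)=(is)^p\e^{-s^2/2}$, in agreement with $J_1(s)=i\e^{-s^2/2}s$ from Proposition~\ref{prop:reduction}. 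Since $X_k$, $k\geq1$, and $Y_k$, $k\geq1$, are independent, the integrand $g_{s,t}(x,y)\defeq f_s(x)f_t(y)$ has the bivariate Hermite expansion $g_{s,t}(x,y)=\sum_{p\geq1}\sum_{q\geq1}\frac{J_p(s)J_q(t)}{p!\,q!}H_p(x)H_q(y)$, and its leading $(p,q)=(1,1)$ term is $J_1(s)J_1(t)\,xy=-\e^{-s^2/2}s\,\e^{-t^2/2}t\,xy$. Hence the expression inside the norm in the proposition is exactly $\frac1n\sum_{j=1}^n R_j(s,t)$, where
\begin{align*}
R_j(s,t)\defeq\sum_{\substack{p\geq1,\,q\geq1\\ (p,q)\neq(1,1)}}\frac{J_p(s)J_q(t)}{p!\,q!}H_p(X_j)H_q(Y_j),
\end{align*}
so the task reduces to showing $\bigl\|\tfrac1n\sum_{j=1}^n R_j\bigr\|_{2}=o_P\pr{n^{-D}L(n)}$.

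By Markov's inequality it suffices to prove $\E\bigl(\bigl\|\tfrac1n\sum_{j=1}^n R_j\bigr\|_{2}^2\bigr)=o\pr{n^{-2D}L^2(n)}$. I would bring the expectation inside the $\omega(s,t)\,ds\,dt$‑integral by Fubini (all terms being nonnegative) and compute, using orthogonality of the Hermite polynomials, $\E\pr{H_p(X_i)H_{p'}(X_j)}=\delta_{pp'}p!\,\rho_X(i-j)^p$ (and likewise for $Y$), and the independence of the two processes,
\begin{align*}
\E\left(\left|\frac1n\sum_{j=1}^n R_j(s,t)\right|^2\right)=\frac1{n^2}\sum_{\substack{p\geq1,\,q\geq1\\ (p,q)\neq(1,1)}}\frac{|J_p(s)|^2\,|J_q(t)|^2}{p!\,q!}\sum_{i=1}^n\sum_{j=1}^n\rho_X(i-j)^p\rho_Y(i-j)^q.
\end{align*}

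The decisive step is a bound on the inner double sum that is uniform in $(p,q)$ and beats the normalization. For $p,q\geq1$ with $(p,q)\neq(1,1)$ one has $p+q\geq3$, whence, using $|\rho_X(k)|\leq1$ and $|\rho_Y(k)|\leq1$,
\begin{align*}
|\rho_X(k)|^p|\rho_Y(k)|^q\leq|\rho_X(k)\rho_Y(k)|\,\bigl(|\rho_X(k)|\vee|\rho_Y(k)|\bigr),
\end{align*}
and the right‑hand side is, up to a constant, regularly varying of index $-3D$. The estimate on p.~1777 of \cite{dehling:taqqu:1989} then gives $\sum_{i,j=1}^n|\rho_X(i-j)|^p|\rho_Y(i-j)|^q=\mathcal{O}\pr{n^{1\vee(2-3D)}L'(n)}$ for some slowly varying $L'$, uniformly in $(p,q)$. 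Combining this with $\sum_{p\geq1}\sum_{q\geq1}\frac{|J_p(s)|^2|J_q(t)|^2}{p!\,q!}\leq\bigl(1-\e^{-s^2}\bigr)\bigl(1-\e^{-t^2}\bigr)$ and the finiteness of $\int_{\mathbb{R}}\int_{\mathbb{R}}\bigl(1-\e^{-s^2}\bigr)\bigl(1-\e^{-t^2}\bigr)\omega(s,t)\,ds\,dt$ (using $\omega(s,t)=(c^2s^2t^2)^{-1}$ and $\int_{\mathbb{R}}(1-\e^{-s^2})s^{-2}\,ds<\infty$) yields
\begin{align*}
\E\left(\left\|\frac1n\sum_{j=1}^n R_j\right\|_{2}^2\right)=\mathcal{O}\pr{n^{-\min(1,\,3D)}L'(n)}=o\pr{n^{-2D}L^2(n)},
\end{align*}
the last step because $D\in(0,\frac{1}{2})$ forces $\min(1,3D)>2D$ and slowly varying factors are harmless; Markov's inequality then finishes the proof.

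I expect the main obstacle to be precisely the third display: the naive bound $|\rho_X(k)|^p|\rho_Y(k)|^q\leq|\rho_X(k)\rho_Y(k)|$ only produces $\mathcal{O}(n^{-2D}L^2(n))$, which is \emph{not} $o(n^{-2D}L^2(n))$, so one genuinely has to extract the additional factor $|\rho_X(k)|\vee|\rho_Y(k)|$; this is legitimate exactly because every surviving index pair satisfies $p+q\geq3$, and it is the point at which the hypothesis $D<\frac{1}{2}$ is used. The remaining ingredients — the Fubini interchange, the Hermite‑orthogonality bookkeeping, and the integrability against $\omega$ — are routine and mirror the one‑dimensional argument in the proof of Theorem~\ref{thm:1_parameter}.
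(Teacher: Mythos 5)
Your proof is correct and follows essentially the same route as the paper's: a bivariate Hermite-expansion reduction, orthogonality of Hermite polynomials, the bound $|\rho(k)|^{p+q}\le |\rho(k)|^{3}$ for the surviving indices with $p+q\ge 3$, the Dehling--Taqqu estimate $\sum_{i,j}|\rho(i-j)|^{3}=\mathcal{O}\pr{n^{1\vee(2-3D)}L'(n)}$, and the hypothesis $D<\tfrac{1}{2}$ to beat the normalization $n^{-2D}L^{2}(n)$. The only difference is presentational: you work with the complex integrand directly, exploiting the factorization $f_s(x)f_t(y)$ so that all terms with $p=0$ or $q=0$ vanish automatically, whereas the paper splits into real and imaginary parts and verifies separately that the imaginary part has Hermite rank greater than $2$.
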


Taking Proposition \ref{prop:reduction}
and the decomposition in \eqref{eq:decomposition_2}
 into consideration
 and noting that
\begin{align*}
\int_{\mathbb{R}}\int_{\mathbb{R}}
\left|\exp\left(-\frac{s^2}{2}\right)s\exp\left(-\frac{t^2}{2}\right)t\right|^2w(s, t)dsdt=\frac{\pi}{c^2},
\end{align*} it follows that the limit
of 
\begin{align*}
&n^{D_X+D_Y}L_X^{-1}(n)L_Y^{-1}(n)\dcov_n(X, Y; h)\\
=&\int_{\mathbb{R}}\int_{\mathbb{R}}\left|n^{\frac{D_X+D_Y}{2}}L_X^{-\frac{1}{2}}(n)L_Y^{-\frac{1}{2}}(n)\left(\varphi_{X, Y; h}^{(n)}(s, t)-\varphi_X^{(n)}(s)\varphi_Y^{(n)}(t)\right)\right|^2w(s,t)dsdt
\end{align*}
equals  the limit of
\begin{align*}
\frac{\pi}{c^2}\biggl(n^{\frac{D_X+D_Y}{2}-2}L_X^{-\frac{1}{2}}(n)L_Y^{-\frac{1}{2}}(n)\sum\limits_{i=1}^n\sum\limits_{j=1}^{n}X_iY_{j}-n^{\frac{D_X+D_Y}{2}-1}L_X^{-\frac{1}{2}}(n)L_Y^{-\frac{1}{2}}(n)\sum\limits_{j=1}^{n-h}X_jY_{j+h}\biggr)^2.
\end{align*}

In order to derive the limit distribution of the above expression, we make use of the theory on spectral distributions established in \cite{major:2020}. 
For this, we consider the following representation of Gaussian random variables:
\begin{align}\label{eq:random_spectral_measures}
X_j=\int_{\left[-\pi, \pi\right)}\e^{ijx}Z_{ X}(dx), \
Y_j=\int_{\left[-\pi, \pi\right)}\e^{ijy}Z_{ Y}(dy),
\end{align}
where $Z_{ X}$ and $Z_{ Y}$ are
corresponding random spectral measures determined by the positive semidefinite matrix-valued, even  spectral measure $(G_{j, j'})$, $1\leq j, j'\leq 2$, on the torus $\left[-\pi, \pi\right)$ with coordinates $G_{j, j'}$ satisfying
\begin{flalign*}
&\E(X_j X_{j+k})=\int_{\left[-\pi, \pi\right)}\e^{ikx}G_{1, 1}(dx), \ 
\E(Y_j Y_{j+k})=\int_{\left[-\pi, \pi\right)}\e^{ikx}G_{2, 2}(dx), \\
&\E(X_j Y_{j+k})=\int_{\left[-\pi, \pi\right)}\e^{ikx}G_{1, 2}(dx)=0, \ 
\E(Y_j X_{j+k})=\int_{\left[-\pi, \pi\right)}\e^{ikx}G_{2, 1}(dx)=0.
\end{flalign*}

According to 
\cite{major:2020}
the random spectral measures
\begin{align*}
&Z_{X}^{(n)}(A)=\sqrt{n^{D_X}L_X^{-1}(n)}Z_{X}\left(\frac{A}{n}\right)=n^{\frac{D_X}{2}}L_X^{-\frac{1}{2}}(n)Z_{X}\left(\frac{A}{n}\right)
\intertext{and}
&Z_{G, Y}^{(n)}(A)=\sqrt{n^{D_Y}L_Y^{-1}(n)}Z_{G, Y}\left(\frac{A}{n}\right)=n^{\frac{D_Y}{2}}L_Y^{-\frac{1}{2}}(n)Z_{G, Y}\left(\frac{A}{n}\right)
\end{align*}
converge to limits $Z_{X, 0}$ und $Z_{Y, 0}$.

\begin{proposition}\label{prop:conv_LRD}
Let  $(X_i)_{i\geq 1}$ and $(Y_i)_{i\geq 1}$ be two independent,  stationary, long-range dependent Gaussian 
processes with $\E\pr{X_1}=\E\pr{Y_1}=0$, $\Var\pr{X_1}=\Var\pr{Y_1}=1$, 
$\rho_X(k)=\Cov(X_1, X_{1+k})=k^{-D_X}L_X(k)$  and  $\rho_Y(k)=\Cov(Y_1, Y_{1+k})=k^{-D_Y}L_Y(k)$
for $D_X +D_Y \in (0, 1)$ and slowly varying functions $L_X$ and $L_Y$.
Then, it holds that
 \begin{multline*}
n^{\frac{D_X+D_Y}{2}-2}L_X^{-\frac{1}{2}}(n)L_Y^{-\frac{1}{2}}(n)\sum\limits_{i=1}^n\sum\limits_{j=1}^nX_iY_j-n^{\frac{D_X+D_Y}{2}-1}L_X^{-\frac{1}{2}}(n)L_Y^{-\frac{1}{2}}(n)\sum\limits_{j=1}^nX_jY_j\\
\overset{\mathcal{D}}{\longrightarrow}
\int_{\left[-\pi, \pi\right)^2}\left[\left(\frac{\e^{ix}-1}{ix}\right)\left(\frac{\e^{iy}-1}{iy}\right)-\frac{\e^{i(x+y)}-1}{i(x+y)}\right]
Z_{X, 0}(dx)Z_{Y, 0}(dy).
 \end{multline*}
\end{proposition}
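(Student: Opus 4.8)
The plan is to rewrite the left-hand side as a single double Wiener--Itô integral against the product random spectral measure $Z_{G,X}(dx)\,Z_{G,Y}(dy)$ and then identify its limit by a multiple-integral limit theorem of the Dobrushin--Major type, in the form developed in \cite{major:2020}. First I would use the spectral representation \eqref{eq:random_spectral_measures} to write
\[
\frac1{n^2}\sum_{i=1}^n\sum_{j=1}^n X_iY_j
=\int_{[-\pi,\pi)^2}\Bigl(\tfrac1n\sum_{i=1}^n\e^{ ix}\Bigr)\Bigl(\tfrac1n\sum_{j=1}^n\e^{ jy}\Bigr)Z_{G,X}(dx)\,Z_{G,Y}(dy),
\]
and similarly $\frac1n\sum_{j=1}^n X_jY_j=\int_{[-\pi,\pi)^2}\bigl(\tfrac1n\sum_{j=1}^n\e^{ j(x+y)}\bigr)Z_{G,X}(dx)\,Z_{G,Y}(dy)$; here one must be slightly careful that the diagonal $x+y\equiv 0$ contributes only a vanishing term because $X$ and $Y$ are independent, so $G_{1,2}=G_{2,1}=0$ and the integral against $Z_{G,X}\otimes Z_{G,Y}$ is a genuine double integral with no diagonal correction (unlike a single-process double integral). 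Hence, after multiplying by the normalization, the left-hand side equals
\[
\int_{[-\pi,\pi)^2} f_n(x,y)\,Z_{G,X}(dx)\,Z_{G,Y}(dy),\qquad
f_n(x,y)=n^{\frac{D_X+D_Y}{2}-2}L_X^{-1/2}(n)L_Y^{-1/2}(n)\Bigl[D_n(x)D_n(y)-\tfrac1n\textstyle\sum_{j=1}^n\e^{ j(x+y)}\cdot n\Bigr],
\]
with $D_n(u)=\sum_{j=1}^n\e^{ ju}$ the (conjugate) Dirichlet kernel; I would keep careful bookkeeping of the two normalizations $n^{-2}$ and $n^{-1}$ attached to the two sums.

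Next I would perform the change of variables $x\mapsto x/n$, $y\mapsto y/n$, exploiting that the long-range dependence hypotheses $\rho_X(k)\sim k^{-D_X}L_X(k)$, $\rho_Y(k)\sim k^{-D_Y}L_Y(k)$ translate, via the standard Tauberian correspondence, into the spectral densities $g_X(u)\sim c_X|u|^{D_X-1}L_X(1/|u|)$ and $g_Y(u)\sim c_Y|u|^{D_Y-1}L_Y(1/|u|)$ near the origin. Under the rescaling, the rescaled random spectral measures converge to the self-similar Gaussian random measures $W_X,W_Y$ with spectral densities $|u|^{D_X-1}$, $|u|^{D_Y-1}$, and the kernel $n\,f_n(x/n,y/n)$ converges pointwise (and in the relevant $L^2$ sense against the limiting spectral density) to
\[
\Bigl(\tfrac{\e^{ix}-1}{ix}\Bigr)\Bigl(\tfrac{\e^{iy}-1}{iy}\Bigr)-\tfrac{\e^{i(x+y)}-1}{i(x+y)},
\]
because $\tfrac1n D_n(u/n)\to \tfrac{\e^{iu}-1}{iu}$ and $\tfrac1n\sum_{j=1}^n\e^{ij(x+y)/n}\to\tfrac{\e^{i(x+y)}-1}{i(x+y)}$. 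I would then invoke the continuity/convergence theorem for multiple Wiener--Itô integrals in \cite{major:2020}: if the kernels $f_n$ converge to $f$ in the weighted $L^2$-norm determined by the (matrix-valued) limiting spectral measure, and the spectral measures rescale to the limiting ones, then $\int f_n\,Z_{G,X}(dx)Z_{G,Y}(dy)$ converges in distribution to $\int f\,W_X(dx)W_Y(dy)$, which is exactly the asserted limit.

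The main obstacle I anticipate is the $L^2$-control needed to upgrade the pointwise convergence of the rescaled kernels to convergence in the weighted $L^2$-norm, uniformly enough to apply the limit theorem — in particular, dominating the Dirichlet-kernel products $|\tfrac1n D_n(x/n)|^2|\tfrac1n D_n(y/n)|^2$ and the cross term near the anti-diagonal $x+y\approx 0$ by an integrable majorant against $|x|^{D_X-1}|y|^{D_Y-1}$, and checking that the contribution from frequencies bounded away from $0$ (where the Dirichlet kernel is not small but the rescaling compresses them) is asymptotically negligible. A secondary technical point is to justify rigorously that no diagonal term arises: because $Z_{G,X}$ and $Z_{G,Y}$ are orthogonal (the off-diagonal blocks of $G$ vanish under independence), the product integral is well defined without subtracting a trace term, but one should still verify the second-moment bound $\E\bigl|\int f_n\,Z_{G,X}Z_{G,Y}\bigr|^2=\int|f_n|^2\,g_X\,g_Y$ to legitimize all the manipulations above. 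Once these estimates are in place, combining them with Propositions \ref{prop:reduction} and \ref{prop:reduction_LRD_2} and the continuous mapping theorem yields part (2) of Theorem \ref{thm:main_result}.
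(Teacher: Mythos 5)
Your proposal follows essentially the same route as the paper's proof: rewrite both double sums as integrals against the product spectral measure $Z_{G,X}(dx)\,Z_{G,Y}(dy)$ with geometric-sum (Dirichlet) kernels, apply the change of variables $x\mapsto x/n$, $y\mapsto y/n$ together with the rescaled spectral measures $Z^{(n)}_{G,X}$, $Z^{(n)}_{G,Y}$, and then invoke the multiple Wiener--It\^{o} limit theory of \cite{major:2020} to pass to the limit kernel $\bigl(\tfrac{\e^{ix}-1}{ix}\bigr)\bigl(\tfrac{\e^{iy}-1}{iy}\bigr)-\tfrac{\e^{i(x+y)}-1}{i(x+y)}$. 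The only difference is that you spell out the $L^2$-domination and no-diagonal-term checks that the paper subsumes into the citation of Major's results; the argument itself is the same and correct.
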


For $D_X, D_Y\in \left(\frac{1}{2}, 1\right)$, we derive the asymptotic distribution  of the second summand in the decomposition \eqref{eq:decomposition_2} under the general assumption of subordinated Gaussian processes $X_i=G_1(\xi_i)$, $i\geq 1$, and $Y_i=G_2(\eta_i)$, $i\geq 1$.
Most notably, the following Proposition does not only establish convergence of the summand for a single lag $h$, but also  joint convergence of  summands for different lags.
The latter is needed for  deriving the asymptotic distribution of the test statistic $ n\, \sum_{h=0}^\infty a_h \dcov_n(X,Y;h)$, i.e. for a proof of Theorem \ref{thm:distr_test_statistic}.

\begin{proposition}\label{prop:convergence_SRD}
Let $X_i=G_1(\xi_i)$, $i\geq 1$, and $Y_i=G_2(\eta_i)$, $i\geq 1$, where $(\xi_i)_{i\geq~1}$ and $(\eta_i)_{i\geq 1}$ are two independent, stationary, long-range dependent   Gaussian 
processes with $\E\pr{\xi_1}=\E\pr{\eta_1}=0$, $\Var\pr{\xi_1}=\Var \pr{\eta_1}=1$, 
$\rho_{\xi}(k)=\Cov(\xi_1, \xi_{1+k})=k^{-D_{\xi}}L_{\xi}(k)$ and  $\rho_{\eta}(k)=\Cov(\eta_1, \eta_{1+k})=k^{-D_{\eta}}L_{\eta}(k)$
for 
 $D_{\xi}, D_{\eta} \in (\frac{1}{2}, 1)$ and slowly varying functions $L_{\xi}$ and $L_{\eta}$.
Assume that $\E|X_1|<\infty$   and $\E|Y_1|<\infty$ and define
 \begin{align*}
   Z_n(s, t; h)&=\frac{1}{\sqrt{n}} \sum\limits_{j=1}^{n-h}f_{s, t}(X_j, Y_{j+h}), \
\intertext{where}
\
f_{s, t}(X_j, Y_{j+h})&=\left(\exp(isX_j)-\varphi_X(s)\right)\left(\exp(itY_{j+h})-\varphi_Y(t)\right).
\end{align*}
Then, it holds that
 \begin{align*}
\left(Z_n(s, t; 0), Z_n(s, t; 1), \ldots, Z_n(s, t; H) \right)\overset{\mathcal{D}}{\longrightarrow}
 \left(Z(s, t; 0), Z(s, t; 1), \ldots, Z(s, t; H)\right),
\end{align*}
where  $\overset{\mathcal{D}}{\longrightarrow}$ denotes convergence in $\mathcal{L}^{2}(\mathbb{R}^2,  w(s, t)dsdt)\otimes \cdots \otimes \mathcal{L}^{2}(\mathbb{R}^2,  w(s, t)dsdt)$\\
 and $\left(Z(s, t; 0), Z(s, t; 1), \ldots, Z(s, t; H)\right)_{s, t\in \mathbb{R}}$ is a complex-valued Gaussian process  with covariance structure 
 \begin{align*}
\Gamma_{s,t, s', t'}(i, j)&=\Cov\pr{Z(s, t; i), Z(s', t'; j)}=\E\pr{Z(s, t; i) \overline{Z(s', t'; j)}}\\
&=\sum\limits_{k=-\infty}^{\infty}
\E \pr{f_{s, t}(X_1, Y_{1+i})\overline{f_{s', t'}(X_{k+1}, Y_{k+1+j})}},\\
C_{s,t,  s', t'}(i, j)&=\Cov\pr{Z(s, t; i), \overline{Z(s', t'; j)}}=\E\pr{Z(s, t; i) Z(s', t'; j)}\\
&=\sum\limits_{k=-\infty}^{\infty}
\E \pr{f_{s, t}(X_1, Y_{1+i})f_{s', t'}(X_{k+1}, Y_{k+1+j})}.
\end{align*}
\end{proposition}

\section{Finite sample performance}\label{sec:simulations_and_data}

So far, we focused on analyzing the asymptotic behavior of the distance cross-covariances with respect to data
$\pr{X_i, Y_i}$, $i=1, \ldots, n$, stemming from long-range dependent time series  $(X_i)_{i\geq 1}$ and $(Y_i)_{i\geq 1}$.

In Section \ref{subsec:simulations}, we will assess the finite sample performance of the corresponding hypothesis test. In particular, we will compare its finite sample performance to that of a hypothesis test based on the empirical cross-covariance
\begin{align*}
\cov_n(X, Y; h)\defeq\frac{1}{n}\sum\limits_{i=1}^{n-h}(X_i-\bar{X})(Y_{i+h}-\bar{Y}).
\end{align*}

In Section \ref{sec:data},
we apply both hypothesis tests for an analysis of
 the mean monthly discharges of three different rivers with regard to cross-dependence between the corresponding data-generating processes.

\subsection{Simulations}\label{subsec:simulations}

Prior to a comparison of the finite sample performance of the two dependence measures, we 
derive a limit theorem for 
the  empirical cross-covariance 
complementing our main theoretical results stated in Theorem \ref{thm:main_result}.

\begin{theorem}\label{thm:sample_covariance}
 Let $X_i=G_1(\xi_i)$, $i\geq 1$, and $Y_i=G_2(\eta_i)$, $i\geq 1$, where $(\xi_i)_{i\geq 1}$ and $(\eta_i)_{i\geq 1}$  are two independent,  stationary, long-range dependent   Gaussian 
processes with $\E\pr{\xi_1}=\E\pr{\eta_1}=0$, $\Var\pr{\xi_1}=\Var \pr{\eta_1}=1$, 
$\rho_{\xi}(k)=\Cov(\xi_1, \xi_{1+k})=k^{-D_{\xi}}L_{\xi}(k) \text{ and } \rho_{\eta}(k)=\Cov(\eta_1, \eta_{1+k})=k^{-D_{\eta}}L_{\eta}(k)$
for 
 $D_{\xi}, D_{\eta} \in (0, 1)$ and 
 slowly varying functions $L_{\xi}$ and $L_{\eta}$. 
 Assume that $\E(X_1^2)<\infty$   and  $\E(Y_1^2)<\infty$.
\begin{itemize}
\item[(i)] If  $D_{\xi}r_1+ D_{\eta} r_2> 1$, where  $r_1$ and $r_2$ denote the Hermite ranks of $G_1$ and $G_2$,  it holds that
 \begin{align*}
\frac{1}{\sqrt{n}}\sum_{i=1}^{n-h}(X_i-\bar{X})(Y_{i+h}-\bar{Y})
\overset{\mathcal{D}}{\longrightarrow}\mathcal{N}(0, \sigma^2),
\end{align*}
where  
$\sigma^2=
\sum_{k=-\infty}^{\infty}\rho_X(k)\rho_Y(k)$.
\item[(ii)] 
If $G=\text{id}$ and $D_{\xi} + D_{\eta}<1$, it holds that
 \begin{align*}
&n^{\frac{D_X+D_Y}{2}}L_X^{-\frac{1}{2}}(n)L_Y^{-\frac{1}{2}}(n)\frac{1}{n}\sum_{i=1}^{n-h}(X_i-\bar{X})(Y_{i+h}-\bar{Y})\\
&\overset{\mathcal{D}}{\longrightarrow}
\int_{\left[-\pi, \pi\right)^2}\left[\left(\frac{\e^{ix}-1}{ix}\right)\left(\frac{\e^{iy}-1}{iy}\right)-\frac{\e^{i(x+y)}-1}{i(x+y)}\right]
Z_{X, 0}(dx)Z_{Y, 0}(dy),
 \end{align*}
where $Z_{X, 0}$ and $Z_{Y, 0}$ correspond to the limit measures in Proposition \ref{prop:conv_LRD}.
\end{itemize}
\end{theorem}

Analogous to  Theorem \ref{thm:main_result}  for the empirical distance cross-covariance, Theorem \ref{thm:sample_covariance} focuses on a characterization of the limit distribution for the empirical cross-covariances in the case of relatively large values of $D_{\xi}$ and $D_{\eta}$.
According to the corresponding restrictions of the two theorems, the simulation results presented in this section are all based on long-range dependent time series  satisfying these restrictions.
In particular, simulation result are based on LRD time series characterized by LRD parameters  $D_{\xi}$, $D_{\eta}\in \left(\frac{1}{2}, 1\right)$.
Moreover, we restrict our considerations to tests based on the empirical distance cross-covariance and the empirical cross-covariance at lag $h=0$, i.e. we choose the empirical distance covariance $\dcov_n(X, Y; 0)$ and the empirical covariance $\cov_n(X, Y; 0)$ as test statistics.
 
In order to compare the performance of hypothesis tests based on empirical distance covariances to that  based on the  empirical covariances, we consider four different scenarios:  \\
1. \enquote{linearly} correlated data, i.e. we simulate
$k=5000$ repetitions of $(X_1, \ldots, X_n,\\
 Y_1\ldots, Y_n)\defeq (2\Phi(Z_1)-1, \ldots, 2\Phi(Z_{2n})-1)$, where $(Z_1, \ldots, Z_{2n})$ is multivariate normally distributed with mean $0$ and
covariance matrix
\begin{align*}
\sigma_{i, j} \defeq \begin{cases}
\rho(|i-j|) \ &\text{for} \ 1\leq i,j\leq n\\
\sigma_{i-n, j-n} \ &\text{for} \ n+1\leq i,j\leq 2n\\
r\sigma_{i-n, j} \ &\text{for} \ n+1\leq i\leq 2n, 1\leq j\leq n\\
r\sigma_{i, j-n} \ &\text{for} \ 1\leq i\leq n, n+1\leq j\leq 2n\\
\end{cases},
\end{align*}
where
$\rho(k)=\frac{1}{2}\left(\left|k+1\right|^{2H}-2\left|k\right|^{2H}+\left|k-1\right|^{2H}\right)$;
see Figure \ref{fig:linear} for an illustration of different parameter combinations.
\begin{figure}[htbp]
\begin{center} 
\includegraphics[width=.75\textwidth]{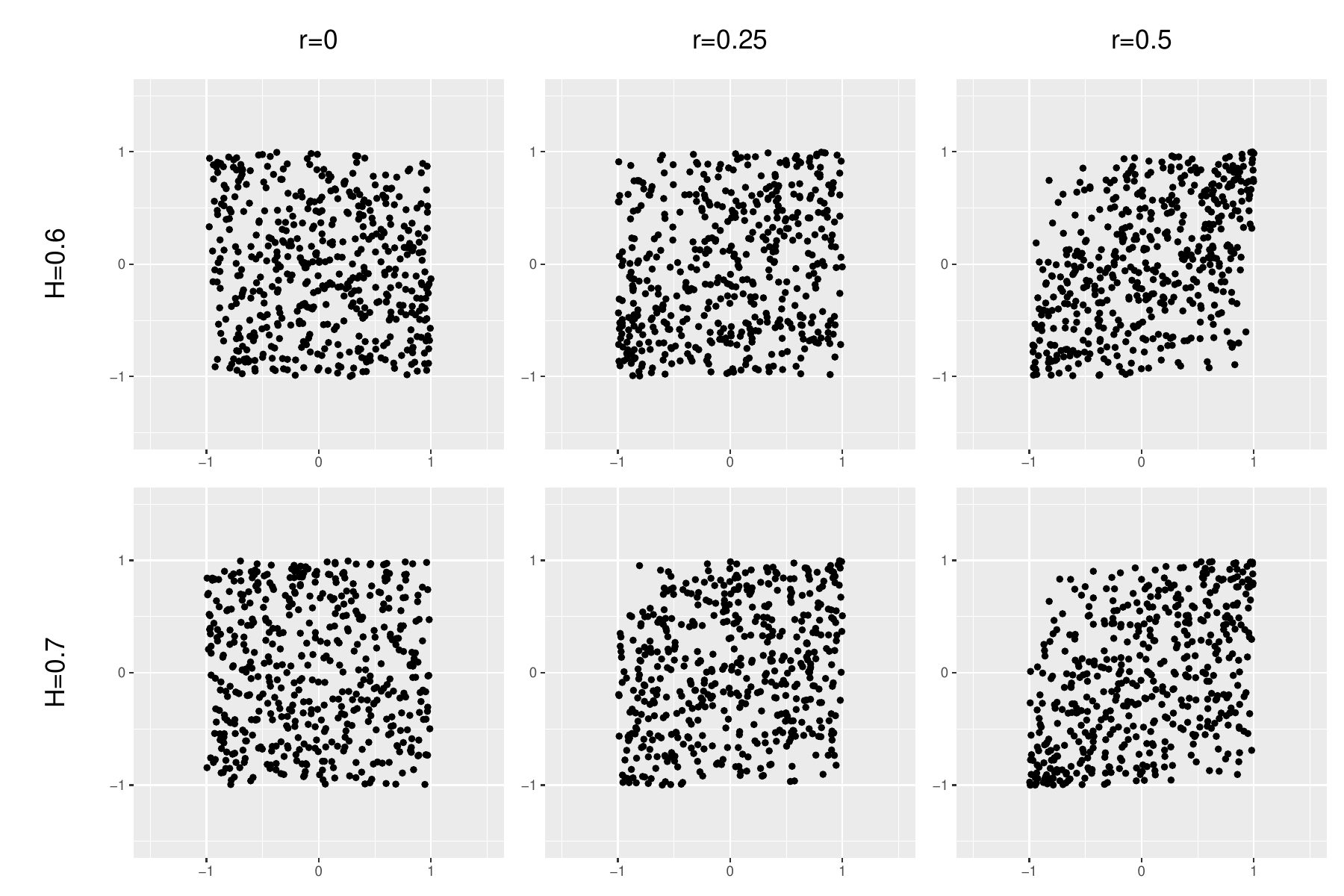}
\caption{ \enquote{Linearly} correlated data $(X_i,Y_i)$,   $i=1, \ldots, 500$,  with parameters $H$ and $r$.}
\label{fig:linear}
\end{center}
\end{figure}

2. \enquote{parabolically} correlated data, i.e.  we simulate
$k=5000$ repetitions of $(X_1, \ldots, X_n, Y_1\ldots, Y_n)$, where 
$(X_1, \ldots, X_n)\defeq (2\Phi(Z_1)-1, \ldots, 2\Phi(Z_{n})-1)$, for fractional Gaussian noise $(Z_i)_{i\geq 1}$ with parameter $H$, and 
\begin{align}\label{eq:squares_transform}
Y_i\defeq v\pr{X_i^2-\frac{1}{3}}+w\xi_i,  \ w=\sqrt{1-\frac{4}{15}v^2},
\end{align}
where  $(\xi_i)_{i\geq 1}$  are independent uniformly on $[-1,1]$ distributed random variables; see Figure \ref{fig:squares}  for an illustration of different parameter combinations.
The choice of the parameter $w$ guarantees $\E Y_1=\E X_1=0$ and $\Var\pr{Y_1}=\Var\pr{X_1}=\frac{1}{3}$.
\begin{figure}[htbp]
\begin{center} 
\includegraphics[width=.75\textwidth]{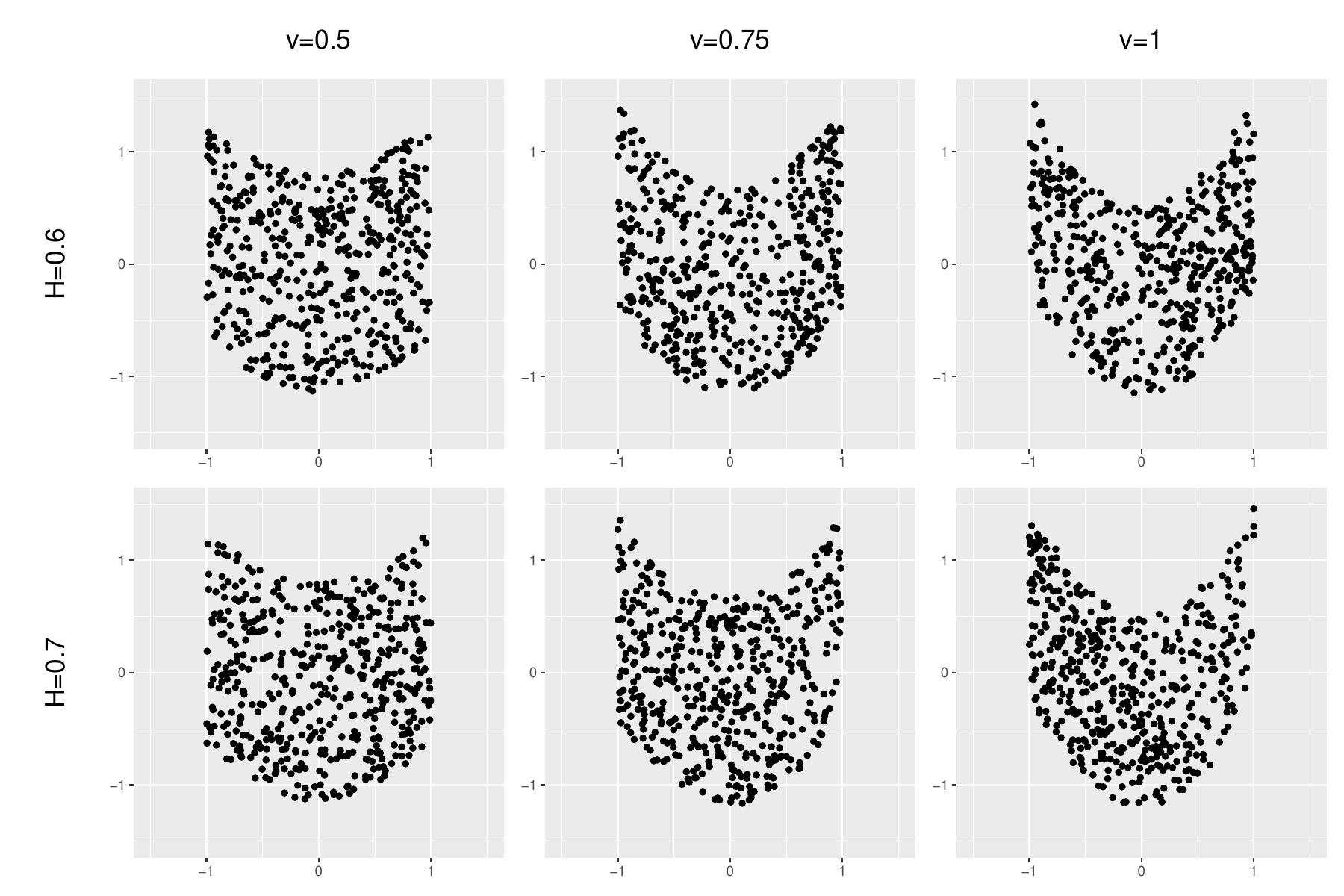}
\caption{ \enquote{Parabolically} correlated data  $(X_i,Y_i)$,   $i=1, \ldots, 500$,  with parameters $H$ and $v$.}
\label{fig:squares}
\end{center}
\end{figure}

3. \enquote{wavily} correlated data, i.e.  we simulate
$k=5000$ repetitions of $(X_1, \ldots, X_n, \\
Y_1\ldots, Y_n)$, where 
$(X_1, \ldots, X_n)\defeq (2\Phi(Z_1)-1, \ldots, 2\Phi(Z_{n})-1)$, for fractional Gaussian noise  $(Z_i)_{i\geq 1}$  with parameter $H$, and 
\begin{align}\label{eq:wave_transform}
Y_j\defeq v\pr{\pr{X_j^2-\frac{1}{3}}^2-3/45}+w\xi_j,  \ w=\sqrt{1-242/4725v^2},
\end{align}
where  $(\xi_i)_{i\geq 1}$  are independent $\mathcal{U}[-1,1]$ distributed random variables; see Figure \ref{fig:wave}  for an illustration of different parameter combinations.
The choice of the parameter $w$ guarantees $\E Y_1=\E X_1=0$ and $\Var\pr{Y_1}=\Var\pr{X_1}=\frac{1}{3}$.
\begin{figure}[htbp]
\begin{center} 
\includegraphics[width=.75\textwidth]{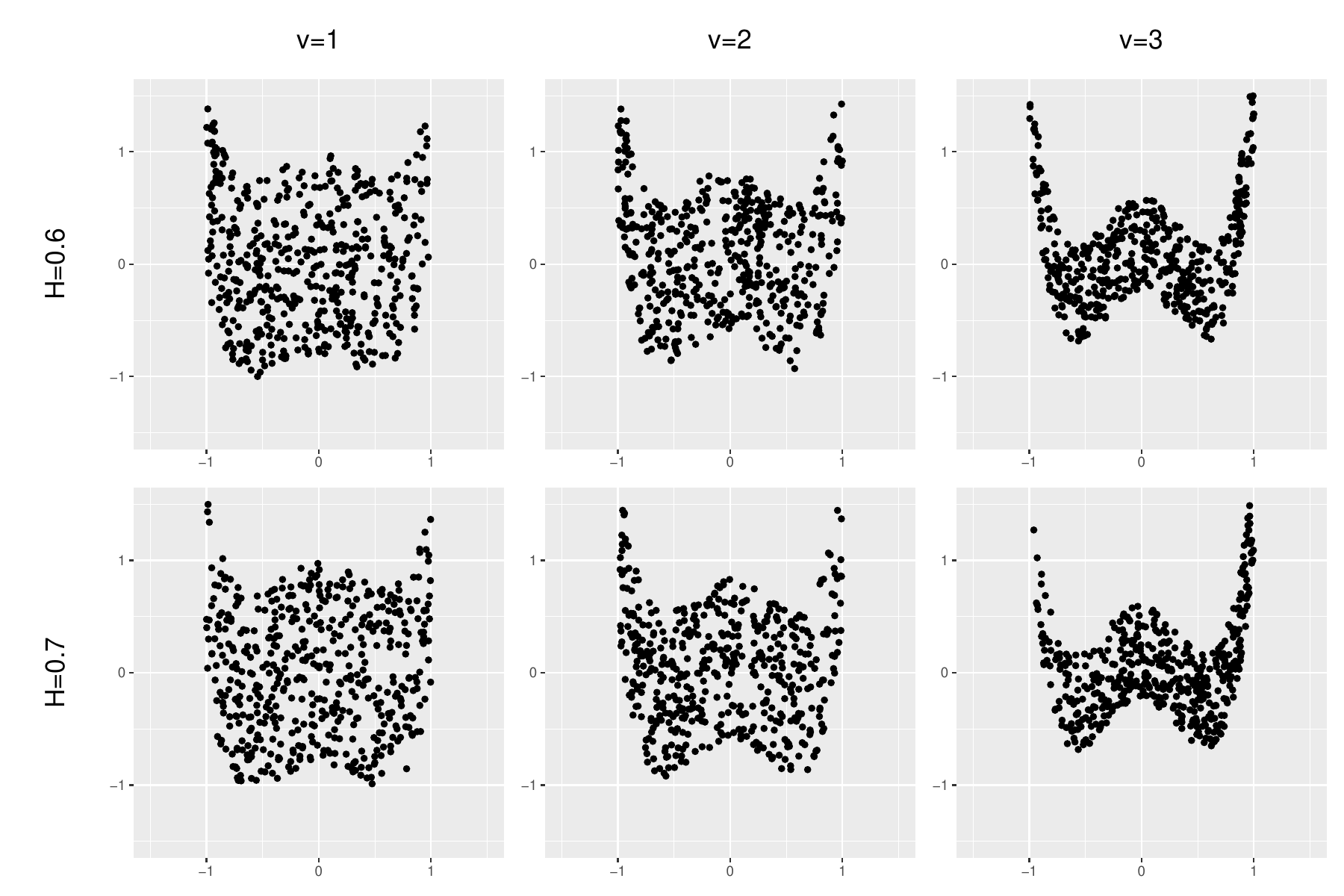}
\caption{\enquote{Wavily} correlated data $(X_i,Y_i)$,   $i=1, \ldots, 500$,  with parameters $H$ and $v$.}
\label{fig:wave}
\end{center}
\end{figure}

4. \enquote{rectangularly} correlated data, i.e.
  we simulate
$k=5000$ repetitions of $(X_1, \ldots, X_n, Y_1\ldots, Y_n)$, where 
\begin{align}\label{eq:rec_transform}
\begin{pmatrix}X_1 & Y_1\\ X_2 & Y_2\\ \vdots & \vdots\\
X_n & Y_n \end{pmatrix}=\begin{pmatrix}\xi_1 & \eta_1\\ \xi_2 & \eta_2\\ \vdots & \vdots\\
\xi_n & \eta_n \end{pmatrix}\begin{pmatrix}\cos(\frac{\pi}{12}v) & -\sin(\frac{\pi}{12}v) \\ \sin(\frac{\pi}{12}v) & \cos(\frac{\pi}{12}v) \end{pmatrix}
\end{align}
and for  two independent fractional Gaussian noise sequences  $(Z_i)_{i\geq 1}$  and  $(\tilde{Z}_i)_{i\geq 1}$  each with parameter $H$,
$(X_1, \ldots, X_n)\defeq (2\Phi(Z_1)-1, \ldots, 2\Phi(Z_{n})-1)$ and
$(Y_1, \ldots, Y_n)\defeq~(2\Phi(\tilde{Z}_1)-~1, \ldots, 2\Phi(\tilde{Z}_{n})-1)$; see Figure \ref{fig:rec}  for an illustration of different parameter combinations.
 \begin{figure}[htbp]
\begin{center}
\label{fig:rec} 
\includegraphics[width=.75\textwidth]{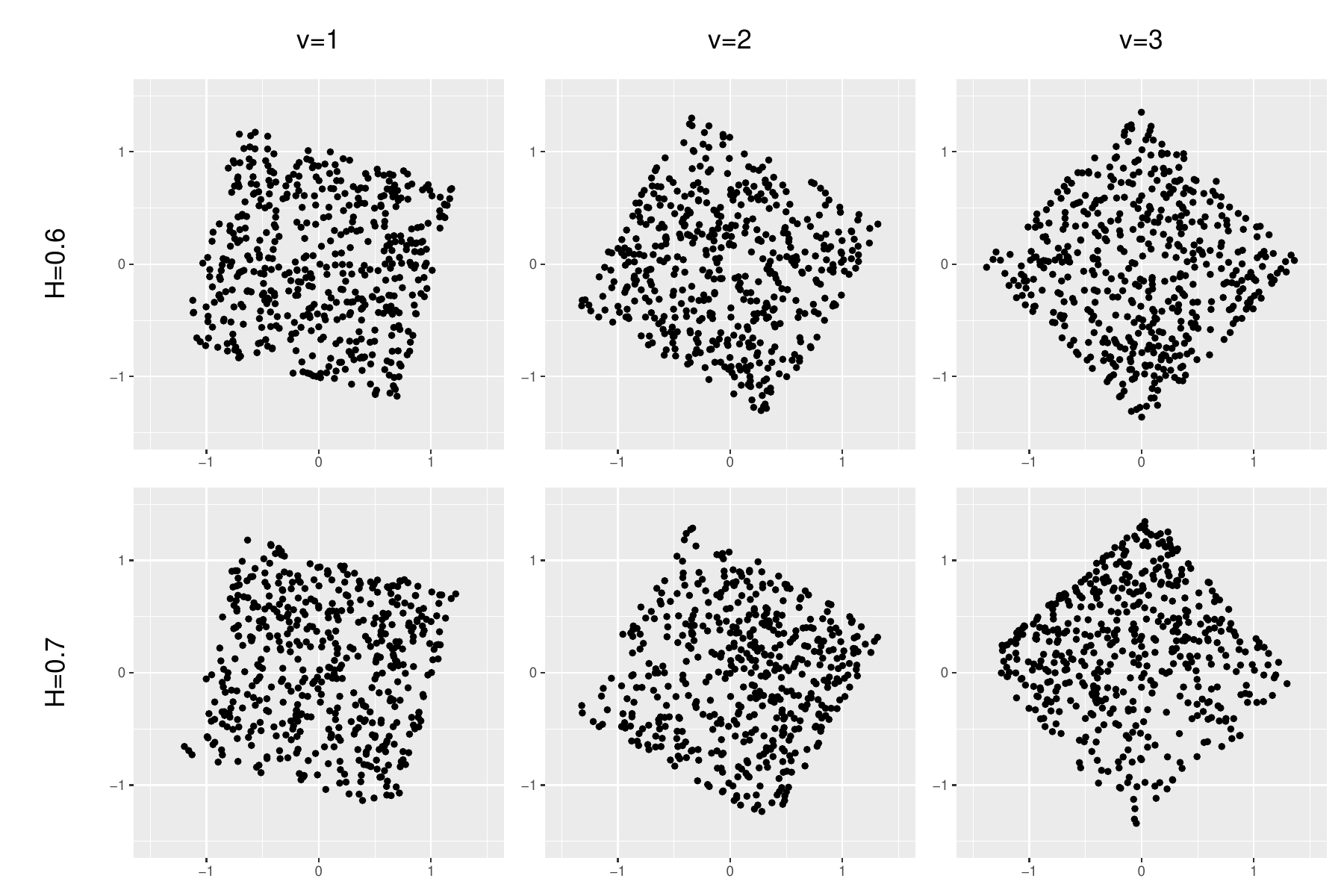}
\caption{\enquote{Rectangularly} correlated data $(X_i,Y_i)$,   $i=1, \ldots, 500$,  with parameters $H$ and $v$.}
\label{fig:rec}
\end{center}
\end{figure}

All calculations are based on $5,000$ realizations of simulated time series and test decisions are based on an application of the sampling-window method for a significance level of $5\%$, meaning that the values of the test statistics are compared to the   95\%-quantile of the empirical distribution function $\widehat{F}_{m_n, l_n}$ defined by  \eqref{eq:subsampling_estimator}. 
The  fractional Gaussian noise sequences are generated by the function \verb$simFGN0$ from the \verb$longmemo$ package in \verb$R$.
Detailed simulation results can be found in  Tables 
\ref{table:linear} -- \ref{table:rectangular} 
in the supplement. These display results for sample sizes $n=100, 300, 500, 1000$,   block lengths $l_n~=~\lfloor n^{\gamma}\rfloor$ with  $\gamma\in \left\{0.4, 0.5, 0.6\right\}$, Hurst parameters $H=0.6, 0.7$ and different values of the parameters $r$ and $v$.

 \begin{figure}[htbp]
\begin{center}
\label{fig:lin_dep_rej_rates} 
\includegraphics[width=.75\textwidth]{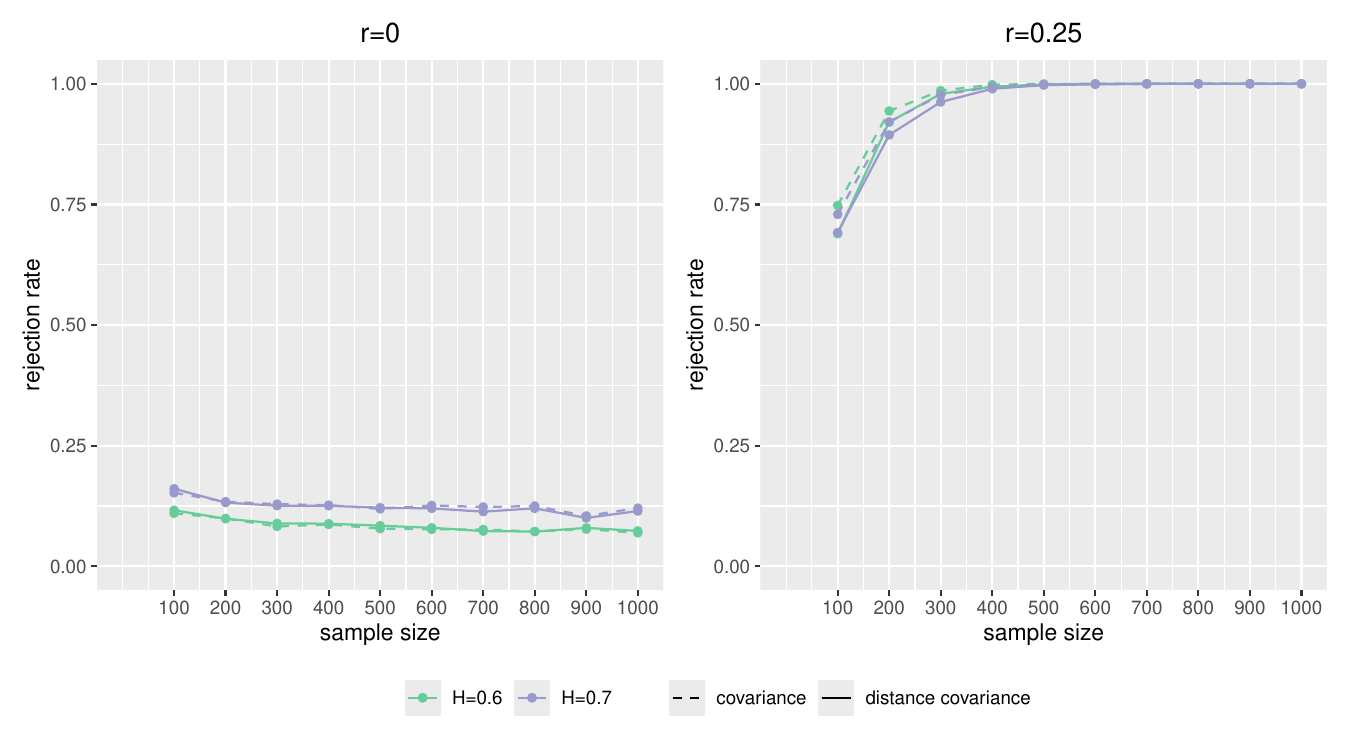}
\caption{Rejection rates of the hypothesis tests resulting from the empirical covariance  and the empirical distance covariance
obtained by  subsampling   based on \enquote{linearly} correlated fractional Gaussian noise time series $X_j$, $j=1, \ldots, n$, $Y_j$, $j=1, \ldots, n$ with block length $l_n~=~\lfloor \sqrt{n}\rfloor$, $d=0.1n$, Hurst parameters $H=0.6, 0.7$,  and cross-covariance $\Cov(X_i, Y_{j})=r \Cov(X_i, X_{j})$,  $1\leq i, j\leq n$, with $r=0$ and $r=0.25$.
The level of significance equals 5\%.}
\end{center}
\end{figure}
 \begin{figure}[htbp]
\begin{center}
\label{fig:squares_rej_rates} 
\includegraphics[width=.75\textwidth]{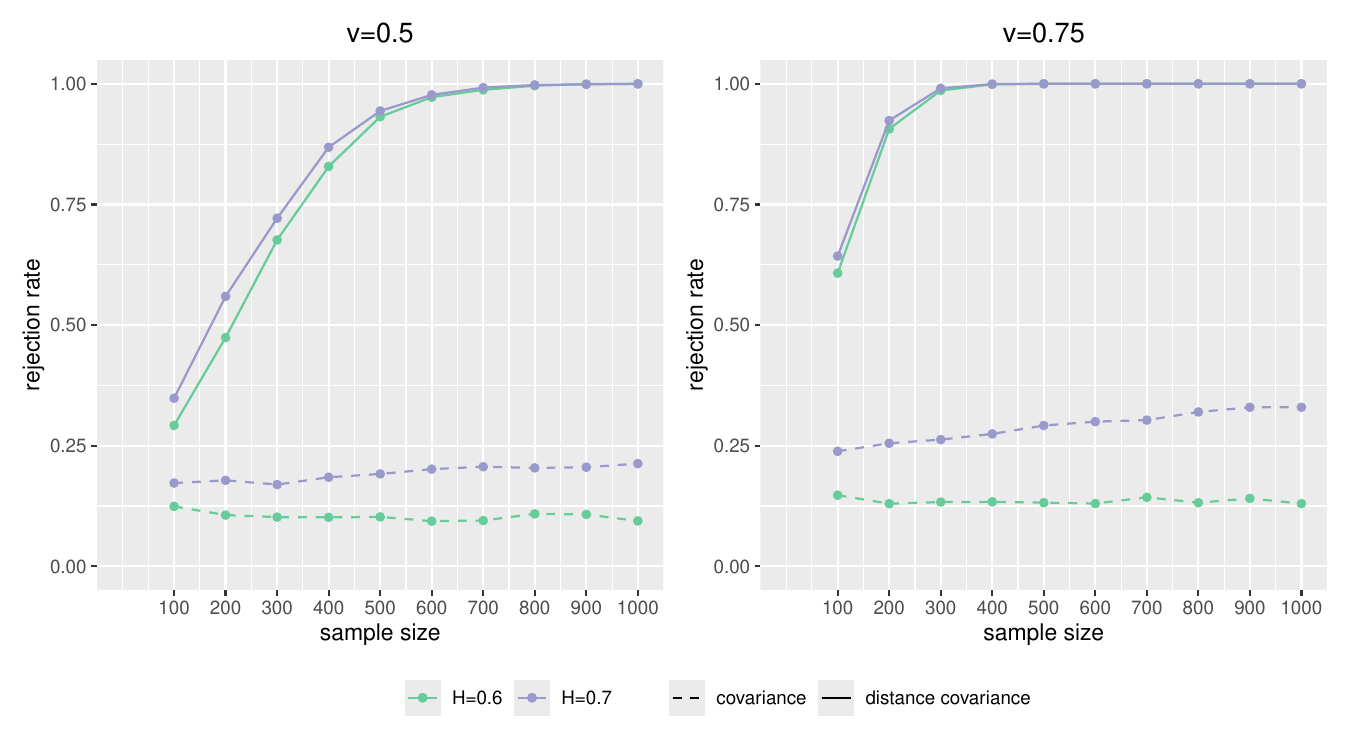}
\caption{Rejection rates of the hypothesis tests resulting from the empirical covariance  and the empirical distance covariance
obtained by  subsampling   based on  \enquote{parabolically} correlated  time series $X_j$, $j=~1, \ldots, n$, $Y_j$, $j=1, \ldots, n$ according to \eqref{eq:squares_transform} with block length $l_n~=~\lfloor \sqrt{n}\rfloor$,  $d=0.1n$, Hurst parameters $H=0.6, 0.7$,  and  with $v=0.5$ and $v=0.75$.
}
\end{center}
\end{figure}
 \begin{figure}[htbp]
\begin{center}
\label{fig:wave_rej_rates} 
\includegraphics[width=.75\textwidth]{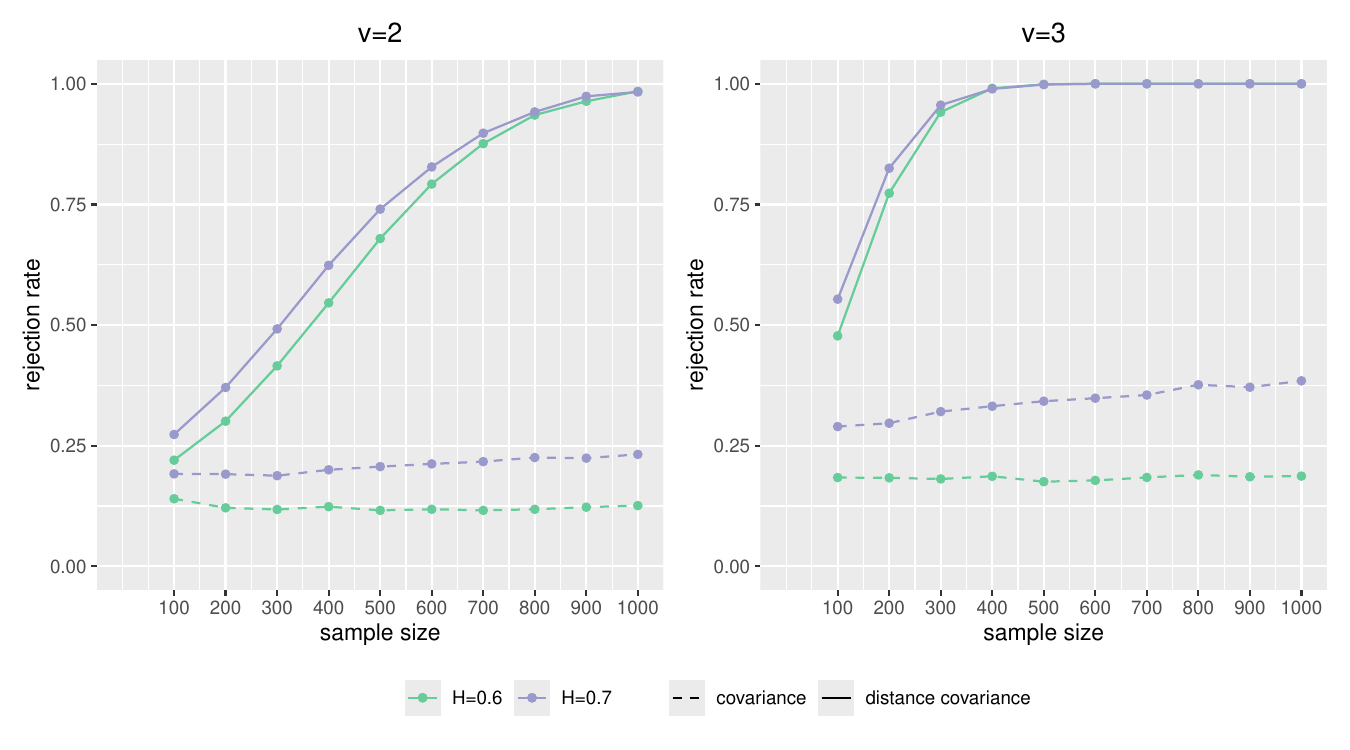}
\caption{Rejection rates of the hypothesis tests resulting from the empirical covariance  and the empirical distance covariance
obtained by  subsampling   based on  \enquote{wavily} correlated  time series $X_j$, $j=1, \ldots, n$, $Y_j$, $j=1, \ldots, n$ according to \eqref{eq:wave_transform} with block length $l_n~=~\lfloor \sqrt{n}\rfloor$,  $d=0.1n$,  Hurst parameters $H=0.6, 0.7$,  and with $v=2$ and $v=3$.
The level of significance equals 5\%.}
\end{center}
\end{figure}
 \begin{figure}[htbp]
\begin{center}
\label{fig:rec_rej_rates} 
\includegraphics[width=.75\textwidth]{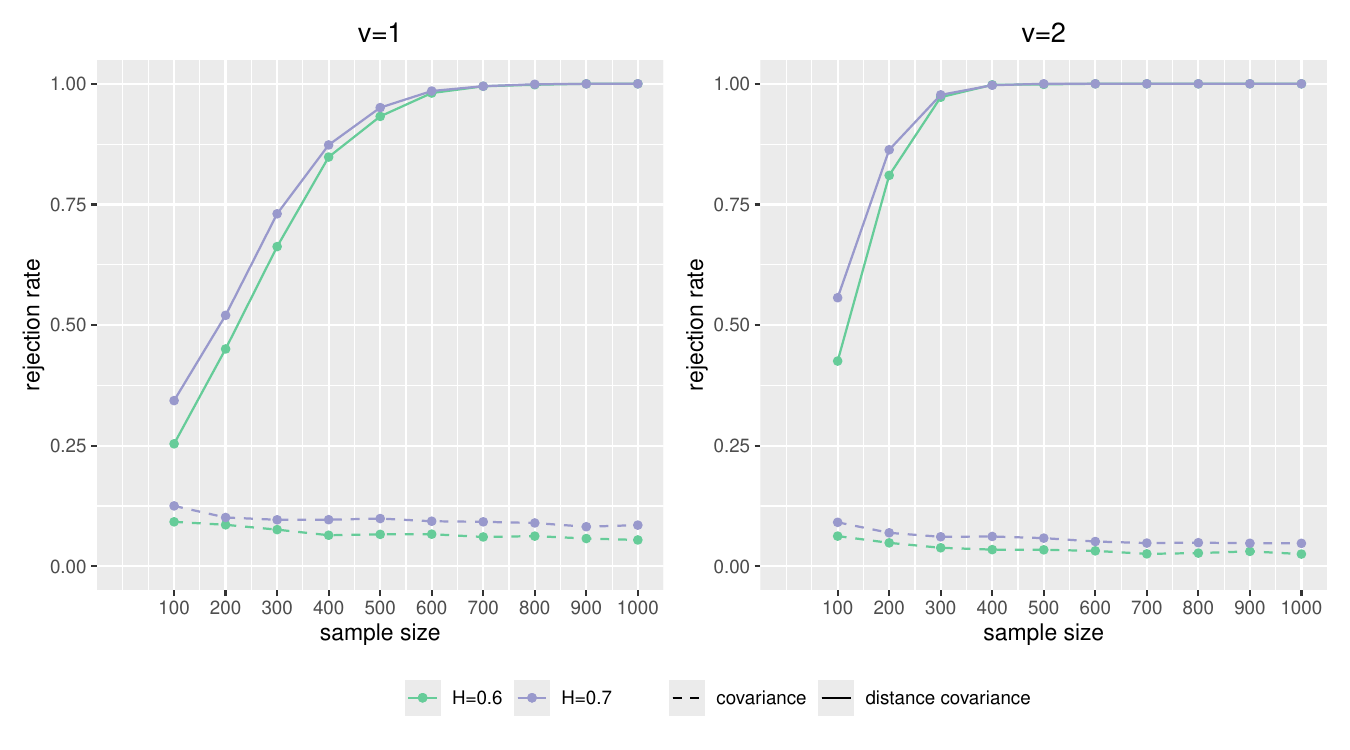}
\caption{Rejection rates of the hypothesis tests resulting from the empirical covariance  and the empirical distance covariance
obtained by  subsampling   based on  \enquote{rectangularly} correlated  time series $X_j$, $j=~1, \ldots, n$, $Y_j$, $j=1, \ldots, n$ according to \eqref{eq:rec_transform} with block length $l_n~=~\lfloor \sqrt{n}\rfloor$,  $d=0.1n$, Hurst parameters $H=0.6, 0.7$,  and with $v=1$ and $v=2$.
The level of significance equals 5\%.}
\end{center}
\end{figure}

As a whole, the simulation results concur with the expected behaviour of hypothesis tests for independence of time series: For both testing procedures, an increasing sample size goes along with an improvement of the finite sample performance of the test, i.e. the
empirical size (that can  be found in the columns of Table \ref{table:linear}   
superscribed by $r=0$) approaches the level of significance and the empirical power increases; stronger deviations from the hypothesis, i.e. an increase of the parameters $r$ and $v$ leads to an increase of the rejection rates.
Moreover, the testing procedures seem to be sensitive to a dependence within the individual time series, as an increase  of the Hurst parameter $H$ results in significantly higher or lower  rejection rates.

Both testing procedures tend to be oversized for small sample sizes. Table \ref{table:linear}  
shows that linear correlation as well as independence of two time series are slightly better detected by a test based on the empirical covariance than by a test based on the empirical distance covariance.
For linearly correlated data, an increase of dependence within the time series, i.e. an increase of the parameter $H$, results in a decrease of the empirical power of both testing procedures.
For \enquote{parabolically} and \enquote{wavily} correlated data, this observation can only be made with respect to the finite sample performance of the test based on  the empirical distance covariance. Most notably, in these two cases,  the test based on  the empirical distance covariance clearly outperforms the test based on the empirical covariance in that it yields decisively higher empirical power.
In addition to this, it seems remarkable that the test based on the empirical distance covariance 
interprets a rotation  of  data points  generated by independent stochastic processes as dependence between the coordinates, while the test that is based on the empirical covariance tends to classify these as being generated by independent processes.

\subsection{Data example}\label{sec:data}

In the following, the mean monthly discharges of three different rivers are analyzed with regard to 
cross-dependence between the corresponding data-generating processes by an application of the test statistics considered in the previous sections. 

The data was provided by the Global Runoff Data
Centre (GRDC) in  Koblenz, Germany; see \cite{grdc}. The GRDC is an international archive  currently comprising river discharge data of more than 9,900 stations from 159 countries.

The time series  we are considering consist of $n=96$ measurements of the mean monthly discharge from January 2000 to December 2007, i.e. a time period of 8 years, for the 
Amazon River, monitored at a station in S\~{a}o Paulo de Oliven\c{c}a, Brazil (corresponding to GRDC-No. 3623100), the Rhine,  monitored at a station in Cologne, Germany (corresponding to GRDC-No. 6335060), and the Juta\'{i} River,  a tributary of the Amazon River, monitored at a station in Coloca\c{c}\~{a}o Caxias
(corresponding to GRDC-No. 3624201).
(We chose the Juta\'{i} River because its discharge volume compares to that of the Rhine.)

As the discharge volume of rivers is affected by seasonalities and trends, we eliminated these effects from the original data sets by the Small Trend Method, see \cite{brockwell:davis:1991}, Chapter 1.4, p. 21, before our analysis. Figures  \ref{fig:rhine}, \ref{fig:amazonas}, and \ref{fig:jutai} depict the values of the detrended and deseasonalized  time series.

\begin{figure}[htbp]
\begin{center} 
\includegraphics[width=0.7\textwidth]{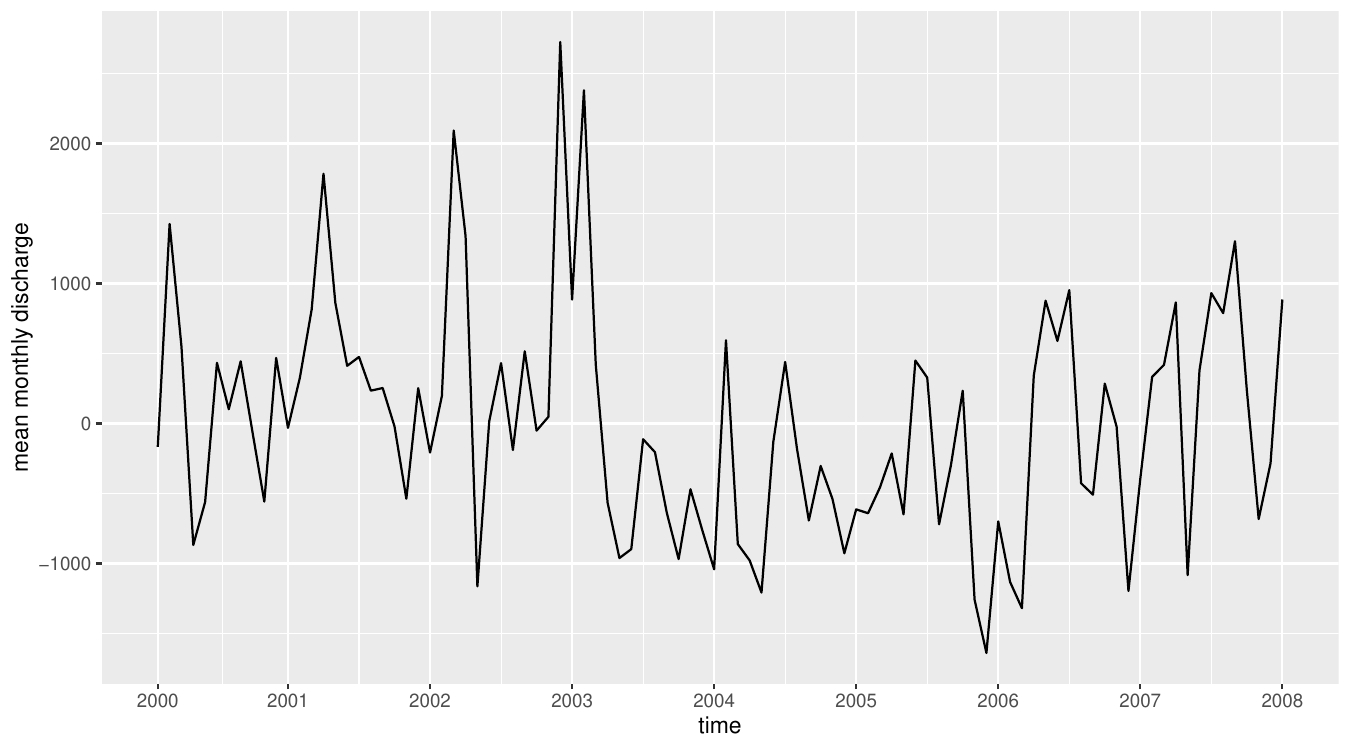}
\caption{Detrended and deseasonalized mean  monthly discharges of the  Rhine River.}\label{fig:rhine}
\end{center}
\end{figure}

\begin{figure}[htbp]
\begin{center} 
\includegraphics[width=0.7\textwidth]{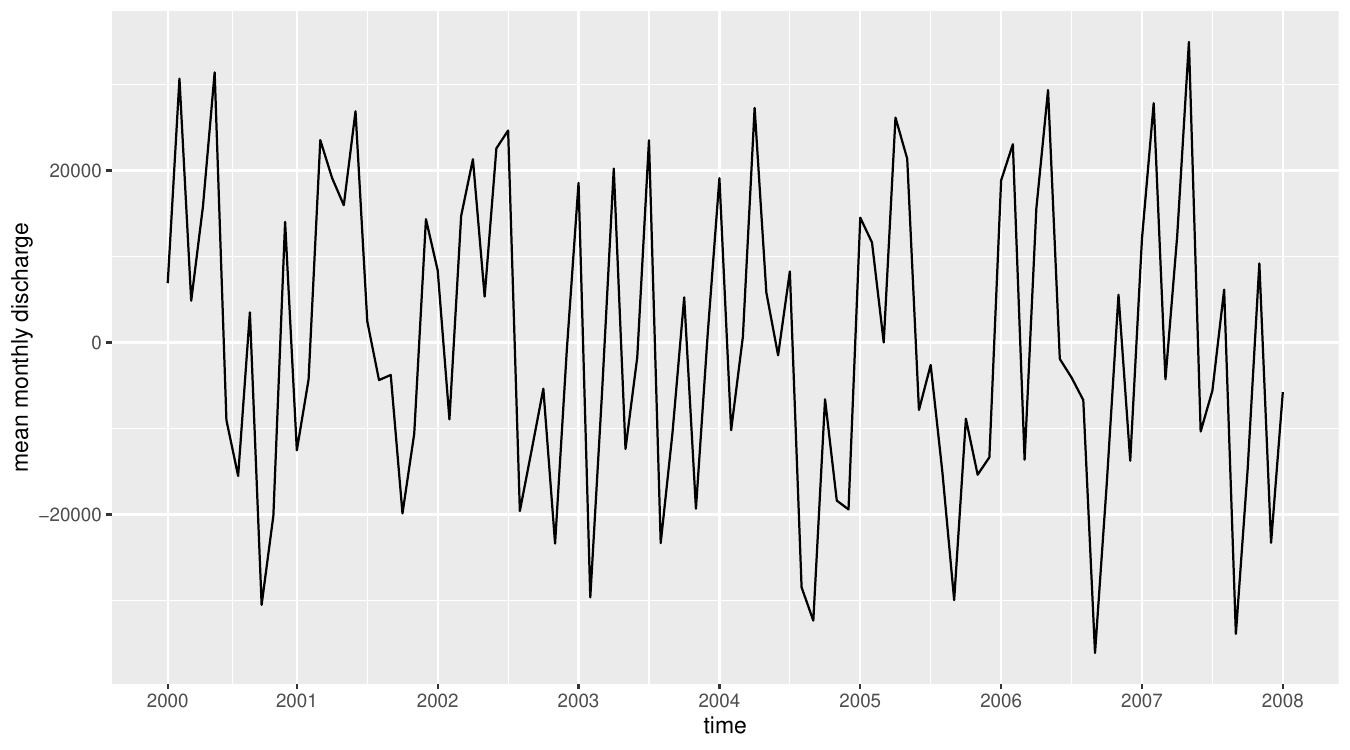}
\caption{Detrended and deseasonalized mean  monthly discharges of the   Amazon River.}\label{fig:amazonas}
\end{center}
\end{figure}

\begin{figure}[htbp]
\begin{center} 
\includegraphics[width=0.7\textwidth]{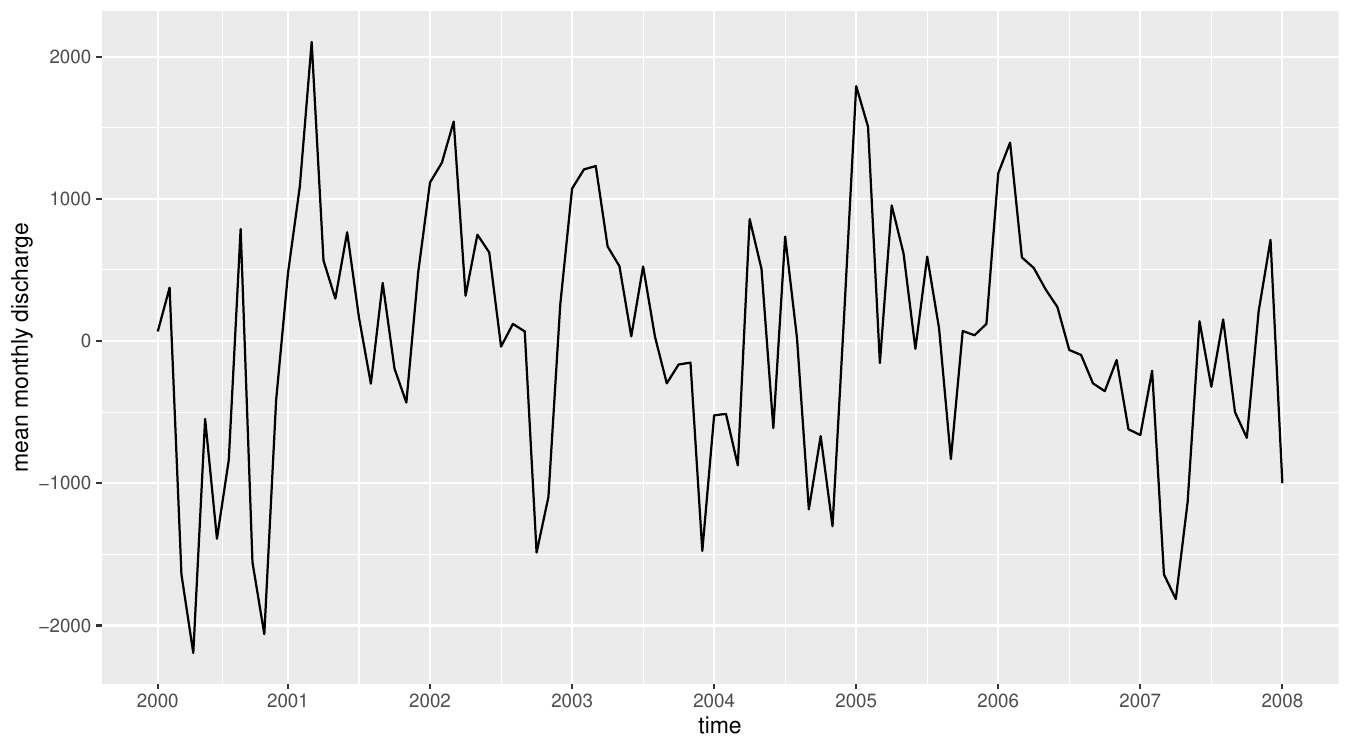}
\caption{Detrended and deseasonalized mean  monthly discharges of the Juta\'{i} River.}\label{fig:jutai}
\end{center}
\end{figure}

The mean monthly discharges of rivers typically display long-range dependence
characterized by a Hurst parameter $H$ that is close to $0.7$, meaning the long-range dependence parameter $D$ of the data-generating time series may be assumed  to be close to $0.6$. 
Under a corresponding assumption, 
a test decision based on the distance cross-covariance rejects the hypothesis  for large values of 
\begin{align*}
\sqrt{n}\int_{\mathbb{R}^{p}}\int_{\mathbb{R}^{q}}\left|\varphi_{X, Y}^{(n)}(s, t)-\varphi_X^{(n)}(s)\varphi_Y^{(n)}(t)\right|^2 s^{-2}t^{-2}ds dt,
\end{align*}
while a test decision
based on the  empirical cross-covariance rejects the hypothesis  for large values of 
\begin{align*}
\frac{1}{\sqrt{n}}\left|\sum\limits_{i=1}^n(X_i-\bar{X})(Y_i-\bar{Y})\right|.
\end{align*}

In our analysis, we apply both tests to the data.
We base test decisions on an approximation of the distribution of the test statistics by the sampling-window method with block size $l=~\lfloor\sqrt{n}\rfloor=9$. As  significance level we choose $\alpha=5\%$.

As the Rhine is geographically separated from the other two rivers, we expect the tests to decide in favor of the hypothesis of independence, when applied to the discharges of the Rhine and one of the Brazilian rivers.
Due to the fact that the  Juta\'{i} River is a tributary of the Amazon River, and due to
the spatial proximity of the two measuring stations in Brazil, which are  approximately 200 kilometers apart, we expect a test for independence of the discharge volumes of these two rivers to reject the hypothesis.
In fact, both tests do not reject the hypothesis of two independent time series when applied to the Rhine's discharge and the Amazon River's or Juta\'{i} River's discharge, respectively, and reject the hypothesis when applied to the discharges of the two Brazilian rivers.

\section*{Acknowledgments}

The authors would like to thank the Global Runoff Data Centre for providing the considered data.
Both authors were supported by 
  Collaborative Research Center SFB 823 {\em Statistical modelling of non-linear dynamic processes}.

\bibliographystyle{imsart-nameyear}

\bibliography{arXiv_distance_correlation}

\newpage

\begin{appendix}

\section{Gaussian subordination and long-range dependence}

This article  focuses on the consideration of  subordinated Gaussian time series, i.e. on random observations generated by  transformations of  Gaussian processes.

\begin{definition}\label{def_sub_gauss}
Let $(\xi_t)_{t\in T}$ be a Gaussian process with index set $T$.
A process $(Y_t)_{t\in~T}$ satisfying $Y_t=G(\xi_t)$ for some measurable function $G:\mathbb{R}\longrightarrow \mathbb{R}$ is called  subordinated Gaussian process.
\end{definition}

\begin{remark}
For any particular distribution function $F$, 
an appropriate choice of the transformation $G$ in Definition \ref{def_sub_gauss} 
yields subordinated Gaussian processes  with marginal distribution $F$.
Moreover, there exist algorithms for generating Gaussian processes that,
after suitable  transformation, yield subordinated Gaussian processes with marginal distribution $F$ and a predefined covariance structure;  see \cite{pipiras:taqqu:2017}.
\end{remark}

\subsection*{Univariate Hermite expansion}

The subordinated random variables $Y_t=G(\xi_t)$, $t\in~T$, can be considered as elements of the Hilbert space $L^2(\mathbb{R}, \varphi(x)dx)=~\mathcal{L}^2(\mathbb{R}, \varphi(x)dx)/\mathcal{N}$, where $\mathcal{L}^2(\mathbb{R}, \varphi(x)dx)$ denotes  the space of all measurable, real-valued  functions which are square-integrable with respect to the measure $\varphi(x)dx$ associated with the standard normal density function $\varphi$ and $\mathcal{N}\defeq~\text{ker}(\|\cdot\|_{L^2})$.
For two functions  $G_1, G_2 \in L^2(\mathbb{R}, \varphi(x)dx)$ the corresponding inner product is defined by
\begin{align}\label{eq:inner_product}
\langle G_1, G_2 \rangle_{L^2}\defeq\displaystyle\int_{-\infty}^{\infty}G_1(x)G_2(x)\varphi(x)dx=\E G_1(X)G_2(X)
\end{align}
 with $X$ denoting a standard normally distributed random variable.

A collection of orthogonal elements in $L^2(\mathbb{R}, \varphi(x)dx)$ is given by the sequence of Hermite polynomials; see \cite{pipiras:taqqu:2017}.

\begin{definition}
For $n\geq 0$, the Hermite polynomial of order $n$ is defined by
\begin{align*}
H_n(x)=(-1)^{n}\e^{\frac{1}{2}x^2}\frac{d^n}{d x^n}\e^{-\frac{1}{2}x^2}, \ x\in \mathbb{R}.
\end{align*}
\end{definition}

Orthogonality of the  sequence $(H_n)_{n\geq 0}$  in $L^2(\mathbb{R}, \varphi(x)dx)$ follows from
\begin{align*}
\langle H_n, H_m \rangle_{L^2}
=\begin{cases}
n! \ \ &\text{if $n=m$,}\\
0 \  \ &\text{if $n\neq m$.}
\end{cases} 
\end{align*}
 Moreover, it can be shown that
the Hermite polynomials form an orthogonal basis of 
$L^2(\mathbb{R}, \varphi(x)dx)$.  As a result,  every $G\in L^2(\mathbb{R}, \varphi(x)dx)$
has an expansion in Hermite polynomials, i.e. for $G\in L^2(\mathbb{R}, \varphi(x)dx)$ and $\xi$ standard normally distributed, we have
\begin{align}\label{eq:Hermite_expansion}
G(\xi)=\sum\limits_{r=0}^{\infty}\frac{J_r(G)}{r!}H_r(\xi),
\end{align}
where the so-called {\em Hermite coefficient} $J_r(G)$ is given by
\begin{align*}
J_r(G): =\langle G, H_r\rangle_{L^2}=\E G(X)H_r(X).
\end{align*}

Equation \eqref{eq:Hermite_expansion} 
holds in an $L^2$-sense, meaning
\begin{align*}
\lim\limits_{n\rightarrow \infty}\left\|G(\xi)-\sum\limits_{r=0}^{n}\frac{J_r(G)}{r!}H_r(\xi)\right \|_{L^2}
= 0, 
\end{align*}
where $\|\cdot\|_{L^2}$ denotes the norm induced by the inner product $\langle \cdot, \cdot\rangle_{L^2}$.

Given the Hermite expansion
\eqref{eq:Hermite_expansion}, it is possible to characterize the dependence structure of subordinated Gaussian time series  $G(\xi_n)$, $n\in \mathbb{N}$.
In fact, it holds that
\begin{align}\label{eq:cov_sub_Gaussian}
\Cov(G(\xi_1), G(\xi_{k+1}))=\sum\limits_{r=1}^{\infty}\frac{J^{2}_r(G)}{r!}\left(\rho(k)\right)^{r},
\end{align}
where $\rho$ denotes the auto-covariance function of $(\xi_n)_{n\geq 1}$; see \cite{pipiras:taqqu:2017}.
 Under the assumption that, as $k$ tends to $\infty$,  $\rho(k)$ converges to $0$ with a certain rate, the asymptotically dominating term in the series \eqref{eq:cov_sub_Gaussian} is the summand corresponding to the smallest integer $r$ for which the Hermite coefficient $J_r(G)$ is non-zero. This index, which decisively depends on $G$, is called {\em Hermite rank}.

\begin{definition}
Let $G\in L^2(\mathbb{R}, \varphi(x)dx)$ with $\E G(X)=0$ for standard normally distributed $X$ and let $J_r(G)$, $r\geq 0$, be the Hermite coefficients in the Hermite expansion of $G$. The smallest index $k\geq 1$ for which $J_k(G)\neq 0$ is called the  Hermite rank of $G$, i.e.
\begin{align*}
r\defeq \min\left\{k\geq 1: J_k(G)\neq 0\right\}.
\end{align*}
\end{definition}

\subsection*{Multivariate Hermite expansion}

Let $(\xi_t)_{t\in T}$, be a  multivariate Gaussian process with index set $T$. More precisely, assume that $\xi_t\defeq (\xi^{(1)}_t, \xi^{(2)}_t, \ldots, \xi^{(d)}_t)$ are Gaussian random vectors  with mean $\mathbf{0}\defeq (0, \ldots, 0)^{\top}$ and covariance matrix $\Sigma$.
We write  $\varphi_{\Sigma}$ for the corresponding density
and denote by $I_d$ the $d\times d$ identity matrix. 
Set $\mathbf{q}=(q_1, \ldots, q_d)^{\top}$, $\mathbf{q}=q_1!\cdots q_d!$, $|\mathbf{q}|=q_1+\ldots +q_d$, $\mathbf{x}=(x_1, \ldots, x_d)^{\top}$, $\mathbf{x}^{\mathbf{q}}=x_1^{q_1}\cdots x_d^{q_d}$, $\partial\mathbf{x}^{\mathbf{q}}=\partial x_1^{q_1}\cdots \partial x_d^{q_d}$ and
\begin{align*}
\left(\frac{d}{d\mathbf{x}}\right)^{\mathbf{q}}=
\frac{\partial^{|\mathbf{q}|}}{\partial \mathbf{x}^{\mathbf{q}}}=
\frac{\partial^{q_1+\ldots +q_d}}{\partial x_1^{q_1}\cdots \partial x_d^{q_d}}.
\end{align*}

Given a measurable function $G:\mathbb{R}^d\longrightarrow \mathbb{R}$, subordinated random variables $Y_t=G(\xi_t)$, $t\in T$, can be considered as elements of the Hilbert space $L^2(\Omega, \varphi_{\Sigma})=\mathcal{L}^2(\Omega, \varphi_{\Sigma})/\mathcal{N}$, where $\mathcal{L}^2(\Omega, \varphi_{\Sigma})$ denotes  the space of all measurable, real-valued  functions which are square-integrable with respect to the measure associated with the  density function $\varphi_{\Sigma}$ and $\mathcal{N}\defeq~\text{ker}(\|\cdot\|_{L^2})$.
For two functions  $G_1, G_2 \in L^2(\mathbb{R}^d, \varphi_{I_2})$ the corresponding inner product is defined by
\begin{align}\label{eq:inner_product_bivariate}
\langle G_1, G_2 \rangle_{L^2}\defeq\displaystyle\int_{-\infty}^{\infty}G_1(x)G_2(x)\varphi_{I_d}(x)dx=\E G_1(\mathbf{X})G_2(\mathbf{X})
\end{align}
 with $\mathbf{X}$ denoting a standard normally distributed, $d$-variate  random vector.

\begin{definition}
For $\mathbf{x}=(x_1, \ldots, x_d)^{\top}\in \mathbb{R}^d$ and $\mathbf{q}=(q_1, \ldots, q_k)^{\top}\in \mathbb{N}^d$ we call $H_{\mathbf{q}}$,  defined by
\begin{align*}
H_{\mathbf{q}}(\mathbf{x}; \Sigma)\defeq \frac{(-1)^{|\mathbf{q}|}}{\varphi_{\Sigma}(\mathbf{x})}\left(\frac{d}{d\mathbf{x}}\right)^{q}\varphi_{\Sigma}(\mathbf{x}),
\end{align*}
a multivariate  Hermite polynomial of degree $k=|\mathbf{q}|$.
\end{definition}

If $\Sigma=I_d$ then $H_{\mathbf{q}}(\mathbf{x}; \Sigma)=\prod_{j=1}^dH_{q_j}(x_j)$, i.e. if the components of the vector $\mathbf{X}$ are independent, then a multivariate Hermite polynomial is a product of univariate ones. 
In the following it is shown that, in fact, it is sufficient to consider Gaussian random vectors with independent, identically distributed components, i.e.
\begin{align*}
\tilde{\mathbf{X}}=(\tilde{X}_1, \ldots, \tilde{X}_d)^{\top}\sim \mathcal{N}(0, I_d).
\end{align*}
Let $G\in L^2(\mathbb{R}^d, \varphi_{I_d})$ and define
\begin{align*}
J(G, \tilde{\mathbf{X}}, \mathbf{q})=J(G, I_d, \mathbf{q})=\langle G, H^*_{\mathbf{q}}\rangle =\E\left(G(\tilde{\mathbf{X}})H^*_{\mathbf{q}}(\tilde{\mathbf{X}})\right),
\intertext{where}
H^*_{\mathbf{q}}(\mathbf{x})=H^*_{q_1, \ldots, q_k}(x_1, \ldots, x_d)=\prod\limits_{j=1}^dH_{q_j}(x_j).
\end{align*}
The Hermite rank $r(G, \tilde{\mathbf{X}})=r(G, I_d)$ of $G$
with respect to $\tilde{\mathbf{X}}$, i.e. with respect to the distribution $\mathcal{N}(0, I_d)$, is the largest integer $r$
such that $J(G, I_d, \mathbf{q})=0$ for all $0<|\mathbf{q}|<r$, where $|\mathbf{q}|=q_1+\ldots +q_d$. Note that this is the same as the largest integer $r$ such that 
\begin{align*}
\langle G(\tilde{\mathbf{X}}),\tilde{\mathbf{X}}^{\mathbf{q}} \rangle =\E\left(G(\tilde{\mathbf{X}})\prod\limits_{j=1}^{d}\tilde{X}_j^{q_j}\right)=0 \ \text{for all} \ 0<|\mathbf{q}|<r.
\end{align*}
As in the univariate case, we have the orthogonal expansion 
\begin{align*}
G(\tilde{X}_1, \ldots, \tilde{X}_d)=\E(G(\tilde{\mathbf{X}}))+\sum\limits_{|\mathbf{q}|\geq r(G, I_d)}\frac{J(G, I_d, \mathbf{q})}{q_1!\cdots q_d!}\prod\limits_{j=1}^d H_{q_j}(\tilde{X}_j).
\end{align*}

Since $\mathbf{X}\sim \mathcal{N}(0, \Sigma)$ is equal in distribution to $U(\tilde{\mathbf{X}})=\Sigma^{\frac{1}{2}}\tilde{\mathbf{X}}$,
we  have the expansion
\begin{align*}
G(\mathbf{X})=\tilde{G}(\tilde{\mathbf{X}})=
\E(G(\mathbf{X})+\sum\limits_{|\mathbf{q}|\geq r(\tilde{G}, I_d)}\frac{J(G\circ U, I_d, \mathbf{q})}{q_1!\cdots q_d!}\prod\limits_{j=1}^d H_{q_j}(\tilde{X}_j).
\end{align*}

\begin{definition}
Let $\mathbf{X}\sim \mathcal{N}(0, \Sigma)$ and $G\in L^2(\mathbb{R}^d, \varphi_{I_d})$.
We define the Hermite coefficients of $G$ with respect to $\mathbf{X}$ by
\begin{align*}
J(G, \mathbf{X}, \mathbf{q})=J(G, \Sigma, \mathbf{q})=\E\left(G(\mathbf{X})H_{\mathbf{q}}^{*}(\mathbf{X})\right).
\end{align*}
The Hermite rank $r(G, \mathbf{X})=r(G, \Sigma)$  is defined as the largest integer $r$ 
such that 
\begin{align*}
J(G, \Sigma, \mathbf{q})=0 \ \text{for all} \ 0<|\mathbf{q}|<r.
\end{align*}
\end{definition}

\begin{remark}
It follows that 
\begin{align*}
r(G, \Sigma)=r(G\circ U, I_d).
\end{align*}
\end{remark}

\subsection*{Long-range dependence}

In this article, we study the asymptotic behaviour of distance covariance for long-range dependent time series. 
The rate of decay  of the auto-covariance function is crucial to the definition of long-range dependent time series.
A relatively slow decay of the auto-covariances characterizes long-range dependent time series, while a relatively fast decay characterizes short-range dependent processes; see \cite{pipiras:taqqu:2017}, p. 17.

\begin{definition}
A  (second-order) stationary, real-valued time series $(X_k)_{k\geq 1}$, is called  long-range dependent if its auto-covariance function $\rho$ satisfies
\begin{align*}
\rho(k)\defeq\Cov(X_1, X_{k+1})\sim k^{-D}L(k), \ \ \text{as $k\rightarrow \infty$,}
\end{align*}
with $D\in \left(0, 1\right)$ for  some slowly varying function $L$.  We refer to $D$ as long-range dependence (LRD) parameter. 
\end{definition}

It follows from  \eqref{eq:cov_sub_Gaussian} that subordination of long-range dependent Gaussian time series  potentially generates time series whose auto-covariances  decay faster than the auto-covariances of the underlying Gaussian process. In some cases, the subordinated time series is long-range dependent as well, in other cases subordination may even yield short-range dependence.
Given that 
  $\Cov(\xi_1, \xi_{k+1})\sim k^{-D}L(k)$, as $k\rightarrow\infty$, for some slowly varying function $L$ and $D\in (0,1)$
and given that $G\in  L^2(\mathbb{R}, \varphi(x)dx)$ is a function with Hermite rank $r$, we have
\begin{align*}
\Cov(G(\xi_1), G(\xi_{k+1}))\sim J_r^2(G)r!k^{-Dr}L^r(k), \ \ \text{as $k\rightarrow \infty$.}
\end{align*}

It immediately follows  that subordinated Gaussian time series  $G(\xi_n)$, $n\geq 1$, are long-range dependent  with LRD parameter $D_G\defeq Dr$ and slowly-varying function $L_G(k)=~J_r^2(G)r!L^r(k)$ whenever $Dr<1$.

\section{Proofs}\label{app:proofs}

\begin{proof}[Proof of Lemma  \ref{lemma:existence_1}]
	Define $u_j(s)\defeq\exp(isX_j)-\varphi_X(s)$. Then, it follows that
	\begin{align*}
	\varphi_X^{(n)}(s)-\varphi_X(s)
	=\frac{1}{n}\sum\limits_{j=1}^nu_j(s).
	\end{align*}
	
	In order to show that $\varphi_X^{(n)}(s)-\varphi_X(s)$ is an element 
	of the vector space $L^{2}(\mathbb{R},  w(s)ds)$, it thus suffices to show that 
	$u_j\in L^{2}(\mathbb{R},  w(s)ds)$.
	For this, note that
	\begin{align*}
	|u_j(s)|^2
	&=\left(\exp(isX_j)-\E\left(\exp(isX_j)\right)\right)\left(\exp(-isX_j)-\E\left(\exp(-isX_j)\right)\right)\\
	&=1+\varphi_X(s)\bar{\varphi}_X(s)-\exp(isX_j)\bar{\varphi}_X(s)
	-\exp(-isX_j)\varphi_X(s).
	\end{align*}
	For $X'\overset{\mathcal{D}}{=}X$, independent of $X$, and $\E_X$ denoting the expected value taken with respect to $X$, we have
	\begin{align*}
	\varphi_X(s)\bar{\varphi}_X(s)
	&=\E\left[\left(\cos(sX)+i\sin(sX)\right)\left(\cos(sX')-i\sin(sX')\right)\right]\\
	&=\E\left[\cos(sX)\cos(sX')+\sin(sX)\sin(sX')\right]\\
	&=\E\left[\cos(s(X-X'))\right],
	\end{align*}
	\begin{align*}
	&\exp(isX_j)\bar{\varphi}_X(s)\\
	=&\left(\cos(sX_j)+i\sin(sX_j)\right)\E\left(\cos(sX)-i\sin(sX)\right)\\
	=&\E_X\left(\cos(sX_j)\cos(sX)+\sin(sX)\sin(sX_j)\right)\\
	&+i\sin(sX_j)\E\left(\cos(sX)\right)-i\cos(sX_j)\E\left(\sin(sX)\right)\\
	=&\E_X\left(\cos (s(X_j-X))\right)+i\sin(sX_j)\E\left(\cos(sX)\right)-i\cos(sX_j)\E\left(\sin(sX)\right), 
	\end{align*}
	and
	\begin{align*}
	&\exp(-isX_j)\varphi_X(s)\\
	=&\left(\cos(sX_j)-i\sin(sX_j)\right)\E\left(\cos(sX)+i\sin(sX)\right)\\
	=&\E_X\left(\cos(sX_j)\cos(sX)+\sin(sX)\sin(sX_j)\right)\\
	&-i\sin(sX_j)\E\left(\cos(sX)\right)+i\cos(sX_j)\E\left(\sin(sX)\right)\\
	=&\E_X\left(\cos (s(X_j-X))\right)-i\sin(sX_j)\E\left(\cos(sX)\right)+i\cos(sX_j)\E\left(\sin(sX)\right).
	\end{align*}
	
	It follows that
	\begin{align*}
	|u_j(s)|^2=2 \E_X\left[1-\cos(s(X_j-X))\right]-\E\left[1-\cos(s(X-X'))\right].
	\end{align*}
	
	As a result, and according to Lemma 1 in 
	\cite{szekely:2007}, we arrive at
	\begin{align*}
	&\int_{\mathbb{R}}\frac{|u_j(s)|^2}{|s|^2}ds\\
	=&\int_{\mathbb{R}}\frac{1}{|s|^2}\left(2 \E_X\left[1-\cos(s(X_j-X))\right]-\E\left[1-\cos(s(X-X'))\right]\right)ds\\
	=&\left(2 \E_X\left[\int_{\mathbb{R}}\frac{1}{|s|^2}\left(1-\cos(s(X_j-X))\right)ds\right]-\E\left[\int_{\mathbb{R}}\frac{1}{|s|^2}\left(1-\cos(s(X-X'))\right)ds\right]\right)\\
		=&2	\text{$\pi$}\E_X(|X_j-X|)-	\text{$\pi$}\E|X-X'|\leq 2	\text{$\pi$}\left(|X_j|+\E|X|\right).
	\end{align*}
	Consequently, $\varphi_X^{(n)}(s)-\varphi_X(s)$ takes values in $L^2\left(\mathbb{R}, w(s)ds\right)$.

	Furthermore, it holds that
	\begin{multline*}
	\int_{\mathbb{R}}\int_{\mathbb{R}}\left|\left(\varphi_Y^{(n)}(t)-\varphi_Y(t)\right)\left(\varphi_X^{(n)}(s)-\varphi_X(s)\right)\right|^2w(s, t)ds dt\\
	=
	\int_{\mathbb{R}}\left|\varphi_X^{(n)}(s)-\varphi_X(s)\right|^2(cs^2)^{-1}ds \int_{\mathbb{R}}\left|\varphi_Y^{(n)}(t)-\varphi_Y(t)
	\right|^2(ct^2)^{-1}dt 
	<\infty, 
	\end{multline*}
	i.e. $\left(\varphi_Y^{(n)}(t)-\varphi_Y(t)\right)\left(\varphi_X^{(n)}(s)-\varphi_X(s)\right)$ takes values in $L^2\left(\mathbb{R}^2, w(s, t)dsdt\right)$.
\end{proof}

\begin{proof}[Proof of Theorem \ref{thm:test_statistic}]
In order to show convergence of the test statistic we apply Theorem 4.2 in \cite{billingsley:1968}.
For this, we define 
\begin{align*}
&V_n\defeq\sum\limits_{h=0}^{\infty}a_{h}
\dcov_n(X,Y;h), \ \
V\defeq \sum\limits_{h=0}^{\infty}a_{h}Z_h, \\
& U_{m, n}\defeq  \sum\limits_{h=0}^{m}a_{h}\dcov_n(X,Y;h), \ \
U_m\defeq  \sum\limits_{h=0}^{m}a_{h}Z_h.
\end{align*}
Based on
Proposition \ref{prop:convergence_SRD}
 it follows from an application of the continuous mapping theorem that
 $
U_{m, n} \overset{\mathcal{D}}{\longrightarrow} U_m$,   $n\rightarrow \infty$.
According to the proof of Proposition \ref{prop:convergence_SRD}
\begin{align*}
&\E \left|Z_n(s, t; h)\right|^{2}\leq f(s, t) \ \text{for all $\pr{s, t}\in \mathbb{R}^2$, $\; n\in \mathbb{N}$,}
\end{align*}
where $f$ is a positive, $w(s, t)dsdt$-integrable function that is independent of $h$.
Since $\E|\dcov_n(X,Y;h)|=\E \left|Z_n(s, t; h)\right|^{2}+o(1)$, $\E \left|Z_n(s, t; h)\right|^{2}\longrightarrow \E \left|Z(s, t; h)\right|^{2}$, and $Z_h=\int_{\mathbb{R}}\int_{\mathbb{R}}\left|Z(s, t; h)\right|^{2}w(s, t)dsdt$ by definition, it follows that 
\begin{align*}
 \E\left|U_{m}-V\right|\leq \sum\limits_{h=m+1}^{\infty}\left|a_h\right|\E\left|Z_h\right|\leq C\sum\limits_{h=m+1}^{\infty}\left|a_h\right|.
\end{align*}
The right-hand side of the above inequality goes to $0$
since, by assumption,
 $\lim_{m\rightarrow\infty}\sum_{h=m+1}^{\infty}|a_h|=0$.
Consequently, $U_{m} \overset{\mathcal{D}}{\longrightarrow} V$,   $m\rightarrow \infty$.
For any $\epsilon>0$ it holds that  
\begin{align*}
P\left(\left|U_{m, n}-V_n\right|\geq \epsilon\right)
\leq \frac{1}{\epsilon}\E\left|U_{m, n}-V_n\right|\leq \sum\limits_{h=m+1}^{\infty}|a_h|\E\left|\dcov_n(X,Y;h)\right|.
\end{align*}
Since  $\E\left(\left|\int_{\R^p}  \int_{\R^q} \big|Z_n(s, t; h)\big|^2 w(s,t) ds \,  dt\right|\right)\leq C$ for some of $h$ independent constant $C$, and for all $n\in\mathbb{N}$,  and  since  $\lim_{m\rightarrow\infty}\sum_{h=m+1}^{\infty}|a_h|=0$ by assumption, it follows that
\begin{align*}
\lim\limits_{m\rightarrow\infty}\limsup\limits_{n\rightarrow\infty }P\left(\left|U_{m, n}-V_n\right|\geq \epsilon\right)=0.
\end{align*}
 Theorem 4.2 in \cite{billingsley:1968} therefore yields $V_n \overset{\mathcal{D}}{\longrightarrow} V$.
\end{proof}

\begin{proof}[Proof of Theorem \ref{thm:subsampling}]

In order  to establish the validity of the subsampling procedure, note that  the triangular inequality  yields
\begin{align}\label{eq:triangular_inequality}
|\widehat{F}_{m_n,  l_n}(t)-F_{T_n}(t)|\leq |\widehat{F}_{m_n,  l_n}(t)-F_T(t)|+|F_{T}(t)-F_{T_n}(t)|.
\end{align}
The second term on the right-hand side of the  inequality converges to $0$ for all points of continuity $t$ of $F_T$ if
the  statistics $T_n$, $n\in\mathbb{N}$, are measurable and converge in distribution to a (non-degenerate) random variable $T$   with distribution function $F_T$.

It remains to show that the first summand on the right-hand side of inequality \eqref{eq:triangular_inequality} converges to $0$ as well.
 As $L^2$-convergence implies convergence in probability,
it suffices to show that $\lim_{n\rightarrow\infty}\E|\widehat{F}_{m_n, l_n}(t)-F_T(t)|^2=0$.
For this purpose, we consider the following bias-variance decomposition:
 \begin{align*}
  \E\left(|\widehat{F}_{m_n, l_n}(t)-F_T(t)|^2\right)
    =\Var\pr{\widehat{F}_{m_n, l_n}(t)}+\left|\E \widehat{F}_{m_n, l_n}(t)-F_T(t)\right|^2.
 \end{align*}

Stationarity and independence of the processes $(X_n)_{n\geq 1}$ and $(Y_n)_{n\geq 1}$  imply
that
$\E \widehat{F}_{m_n, l_n}(t) =F_{T_{l_n}}(t)$, so that,
due to  the convergence of $T_{l_n}$ to $T$, the bias term of the above equation converges to $0$ as $l_n$ tends to $\infty$.

As a result, it remains  to show that  the variance term vanishes as $n$ tends to $\infty$. 
Initially, note that
 \begin{align*}
 &\Var\pr{\widehat{F}_{m_n, l_n}(t)}\\
              =&\frac{1}{m_n}\Var \pr{1_{\left\{T_{l_n, 1}\leq t\right\}}}+\frac{2}{m_n^2}\sum\limits_{k=2}^{m_n}(m_n-k+1)\Cov\left(1_{\left\{T_{l_n, 1}\leq t\right\}}, 1_{\left\{T_{l_n, k}\leq t\right\}}\right)\\
 \leq &\frac{2}{m_n}\sum\limits_{k=1}^{m_n}\left|\Cov\left(1_{\left\{T_{l_n, 1}\leq t\right\}}, 1_{\left\{T_{l_n, k}\leq t\right\}}\right)\right|
\end{align*}
due to stationarity.

Since $T_{l_n, k}=T_{l_n}\left(X_k, \ldots, X_{k+l_n-1}, Y_{k+d_n}, \ldots, Y_{k+d_n+l_n-1}\right)$,
\begin{align*}
&\left|\Cov\left(1_{\left\{T_{l_n, 1}\leq t\right\}}, 1_{\left\{T_{l_n, k}\leq t\right\}}\right)\right|\\
&\leq
\rho\left(\sigma(X_i,  Y_{d_n+i}, 1\leq i\leq l_n), \sigma(X_j, Y_{d_n+j},  k\leq j\leq k+l_n-1)\right),
\end{align*}
where $\sigma(X_i,  Y_{d_n+i}, 1\leq i\leq l_n)$ and  $\sigma(X_j, Y_{d_n+j},  k\leq j\leq k+l_n-1)$)  denote the $\sigma$-fields generated by the random variables $X_i,  Y_{d_n+i}, 1\leq i\leq l_n$, and $X_j, Y_{d_n+j},  k\leq j\leq k+l_n-1$), respectively.

For  $\beta
\in (0,1)$, 
we split the sum of covariances into two parts:
\begin{align*}
&\frac{1}{m_n}\sum\limits_{k=1}^{m_n}\left|\Cov\left(1_{\left\{T_{l_n, 1}\leq t\right\}}, 1_{\left\{T_{l_n, k}\leq t\right\}}\right)\right|\\
=&\frac{1}{m_n}\sum\limits_{k=1}^{\lfloor n^{\beta}\rfloor}\!\!\left|\Cov\left(1_{\left\{T_{l_n, 1}\leq t\right\}}, 1_{\left\{T_{l_n, k}\leq t\right\}}\right)\right|
+\frac{1}{m_n}\sum\limits_{k=\lfloor n^{\beta}\rfloor+1}^{m_n}\!\!\!\!\left|\Cov\left(1_{\left\{T_{l_n, 1}\leq t\right\}}, 1_{\left\{T_{l_n, k}\leq t\right\}}\right)\right|\\
\leq&\frac{\lfloor n^{\beta}\rfloor}{m_n}+\frac{1}{m_n}\sum\limits_{k=\lfloor n^{\beta}\rfloor+1}^{m_n}\rho\left(\sigma(X_i,  Y_{d_n+i}, 1\leq i\leq l_n), \sigma(X_j, Y_{d_n+j},  k\leq j\leq k+l_n-1)\right)\\
\leq&\frac{\lfloor n^{\beta}\rfloor}{m_n}+\frac{1}{m_n}\sum\limits_{k=\lfloor n^{\beta}\rfloor+1}^{m_n}\rho_{k, l_n, d_n},
\end{align*} 
where
\begin{align*}
\rho_{k, l_n, d_n}\defeq\rho\left(\sigma(X_i,  Y_{d_n+i}, 1\leq i\leq l_n), \sigma(X_j, Y_{d_n+j},  k\leq j\leq k+l_n-1)\right).
\end{align*}
The first summand on the right-hand side of the inequality converges to $0$ if $l_n=o(n)$ and $d_n=o(n)$.
In order to show that the second summand converges to $0$,   a sufficiently good approximation to the sum of maximal correlations is needed.
In particular, we have to show that
\begin{align*}
\sum\limits_{k=\lfloor n^{\beta}\rfloor+1}^{m_n}\rho_{k, l_n, d_n}=o(m_n).
\end{align*}

According to \cite{bradley:2005}, Theorem 5.1, 
\begin{align*}
\rho(\mathcal{A}_1\vee \mathcal{A}_2, \mathcal{B}_1\vee \mathcal{B}_2)\leq\max\left\{\rho(\mathcal{A}_1, \mathcal{B}_1), \rho(\mathcal{A}_2, \mathcal{B}_2)\right\}, 
\end{align*}
if $\mathcal{A}_1\vee \mathcal{B}_1$
 and  $\mathcal{A}_2\vee \mathcal{B}_2$ are independent. 
 
As a result (and because of stationarity), it holds that
\begin{align*}
\rho_{k, l_n, d_n}\leq \max\{\rho_{k, l_n, X}, \rho_{k, l_n, Y}\},
\end{align*} 
where
\begin{align*}
\rho_{k, l_n, X}\defeq &\rho\left(\sigma(X_i,  1\leq i\leq l_n), \sigma(X_j,   k\leq j\leq k+l_n-1)\right), \\
\rho_{k, l_n, Y}\defeq &\rho\left(\sigma(Y_{i}, 1\leq i\leq l_n), \sigma(Y_{j},  k\leq j\leq k+l_n-1)\right).
\end{align*}

For this reason, it suffices to show that 
\begin{align*}
\sum\limits_{k=\lfloor n^{\beta}\rfloor+1}^{m_n}\rho_{k, l_n, X}=\sum\limits_{k=\lfloor n^{\beta}\rfloor+1}^{m_n}\rho_{k, l_n, Y}=o(m_n).
\end{align*} 

\cite{betken:wendler:2016} establish the following result:
\begin{lemma}[\cite{betken:wendler:2016}]\label{LemC} 
Given a time series $\xi_k$, $k\geq 1$, satisfying    Assumptions \ref{ass:spectral_density_2} and \ref{ass:covariance_function}, 
there exist constants $C_1,C_2\in (0, \infty)$, such that
\begin{multline*}
\rho\left(\sigma(\xi_i, 1\leq i\leq l), \sigma(\xi_j, k+l\leq j\leq k+2l-1)\right)\\
\leq C_1l^{D}k^{-D}L_{\rho}(k)+C_2l^2k^{-D-1}\max\{L_{\rho}(k),1\}
\end{multline*}
for all $k\in\mathbb{N}$ and all $l\in\{l_k,\ldots,k\}$.
\end{lemma}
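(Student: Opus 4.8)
The plan is to reduce the maximal correlation coefficient between the two block $\sigma$-fields to a finite-dimensional linear-algebra quantity and then estimate that quantity by separating a rank-one "bulk" contribution from a "variation" contribution. Throughout write $\Sigma=\Sigma_l$ for the $l\times l$ Toeplitz covariance matrix $(\rho(i-j))_{1\le i,j\le l}$, which by stationarity is the common covariance of both blocks, let $R=(R_{ij})$ be the $l\times l$ cross-covariance matrix with entries $R_{ij}=\rho(k+l-1+j-i)$, so that all its lags lie in $\{k,\dots,k+2l-2\}$, and set $\mathbf 1=(1,\dots,1)^{\top}$. First I would invoke the classical fact that for jointly Gaussian families the maximal correlation coefficient coincides with the largest canonical correlation (Kolmogorov--Rozanov): concretely,
\[
\rho\big(\sigma(\xi_i,1\le i\le l),\ \sigma(\xi_j,k+l\le j\le k+2l-1)\big)=\big\|\Sigma^{-1/2}R\,\Sigma^{-1/2}\big\|_{\mathrm{op}},
\]
the largest singular value of the normalised cross-covariance. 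Here I use that $\Sigma$ is invertible with smallest eigenvalue bounded below by a positive constant uniformly in $l$: by Assumption \ref{ass:spectral_density_2}, $f(\lambda)=|\lambda|^{D-1}L_f(\lambda)$ with $D\in(0,1)$ is bounded away from $0$ on $[-\pi,\pi]$ (its infimum being attained away from the origin), so $\lambda_{\min}(\Sigma)\ge 2\pi\inf_\lambda f(\lambda)>0$ and $\|\Sigma^{-1/2}\|_{\mathrm{op}}\le C$ uniformly in $l$ and $k$. The constraint $l\le k$ from the statement will be used repeatedly to guarantee $k+m\asymp k$ over the block, so that $(k+m)^{-D}\asymp k^{-D}$ and $L_\rho(k+m)\asymp L_\rho(k)$.

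Next I would split $R=\rho(k)\,\mathbf 1\mathbf 1^{\top}+\big(R-\rho(k)\,\mathbf 1\mathbf 1^{\top}\big)$ and bound the two normalised pieces separately through the triangle inequality for $\|\cdot\|_{\mathrm{op}}$. For the variation piece I use the crude estimate $\|\Sigma^{-1/2}(R-\rho(k)\mathbf 1\mathbf 1^{\top})\Sigma^{-1/2}\|_{\mathrm{op}}\le C^2\|R-\rho(k)\mathbf 1\mathbf 1^{\top}\|_{\mathrm{op}}$, and then that the operator norm of a matrix is dominated by the geometric mean of its maximal row and column sums, both of which are at most $l\max_{0\le m\le 2l-2}|\rho(k+m)-\rho(k)|$. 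Writing $\rho(k+m)-\rho(k)=[(k+m)^{-D}-k^{-D}]L_\rho(k+m)+k^{-D}[L_\rho(k+m)-L_\rho(k)]$, estimating the first bracket by $\asymp k^{-D-1}m\le k^{-D-1}l$ and the second by Assumption \ref{ass:covariance_function}, namely $|L_\rho(k+m)-L_\rho(k)|\le K\frac{l}{k}\min\{L_\rho(k),1\}$, gives $\max_m|\rho(k+m)-\rho(k)|\lesssim l\,k^{-D-1}\max\{L_\rho(k),1\}$. Hence the variation piece contributes the term $C_2\,l^2 k^{-D-1}\max\{L_\rho(k),1\}$.

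The bulk piece is where the real work lies. Since $\rho(k)\mathbf 1\mathbf 1^{\top}$ is positive and of rank one, its normalisation $\rho(k)(\Sigma^{-1/2}\mathbf 1)(\Sigma^{-1/2}\mathbf 1)^{\top}$ is positive semidefinite of rank one, so its operator norm equals $\rho(k)\,\mathbf 1^{\top}\Sigma^{-1}\mathbf 1$. The decisive estimate is therefore the uniform-in-$l$ bound $\mathbf 1^{\top}\Sigma^{-1}\mathbf 1\le C\,l^{D}$, which together with $\rho(k)=k^{-D}L_\rho(k)$ yields exactly the first term $C_1 l^D k^{-D}L_\rho(k)$. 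I expect this to be the main obstacle: the naive bound $\mathbf 1^{\top}\Sigma^{-1}\mathbf 1\le\lambda_{\min}(\Sigma)^{-1}\|\mathbf 1\|^2\le C\,l$ is too weak by a factor $l^{1-D}$, and the improvement genuinely exploits that $\mathbf 1$ points into the high-variance, low-frequency directions that $\Sigma^{-1}$ strongly contracts. The route I would take is spectral: approximate the inverse-Toeplitz quadratic form by the symbol integral
\[
\mathbf 1^{\top}\Sigma^{-1}\mathbf 1\ \asymp\ \int_{-\pi}^{\pi}\frac{|D_l(\lambda)|^2}{f(\lambda)}\,d\lambda,\qquad D_l(\lambda)=\sum_{j=1}^{l}\e^{ij\lambda},
\]
and then use $1/f(\lambda)\asymp|\lambda|^{1-D}$ near the origin together with $|D_l(\lambda)|^2\asymp l^2\wedge|\lambda|^{-2}$; splitting the integral at $|\lambda|=1/l$ produces an $l^D$ contribution from each of the two regimes, which is the asserted order (the slowly varying $L_f^{-1}$ is bounded and absorbed into $C_1$).

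The delicate step — and the one demanding the most care — is to justify replacing $\Sigma^{-1}$ by the Toeplitz operator with symbol $(2\pi f)^{-1}$ uniformly in $l$, i.e. to control the boundary error in the quadratic form. Here one uses that the inverse symbol $|\lambda|^{1-D}L_f(\lambda)^{-1}$ is bounded and vanishes at the origin (an \emph{antipersistent} symbol), so the associated Toeplitz inverse is well-conditioned and the error is of strictly lower order than $l^D$; the lower bound $l\in\{l_k,\dots,k\}$ ensures the block is large enough for this approximation to be effective. Collecting the bulk and variation pieces through the triangle inequality produces the claimed bound, with constants $C_1,C_2$ depending only on $D$, $\inf_\lambda f(\lambda)$, and the constant $K$ of Assumption \ref{ass:covariance_function}.
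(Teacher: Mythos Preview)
The paper does not prove this lemma; it is quoted from \cite{betken:wendler:2016} and used as a black box inside the proof of Theorem~\ref{thm:subsampling}. So there is no in-paper argument to compare against.

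That said, your outline is essentially the standard route and matches what one finds in the cited source: reduce the Gaussian maximal correlation to the largest canonical correlation $\|\Sigma^{-1/2}R\,\Sigma^{-1/2}\|_{\mathrm{op}}$ via Kolmogorov--Rozanov, split $R=\rho(k)\mathbf 1\mathbf 1^{\top}+(R-\rho(k)\mathbf 1\mathbf 1^{\top})$, and bound the two pieces separately. Your treatment of the variation piece through Assumption~\ref{ass:covariance_function} and the mean-value estimate for $(k+m)^{-D}-k^{-D}$ is correct and produces the $C_2 l^2 k^{-D-1}\max\{L_\rho(k),1\}$ term. The uniform lower bound $\lambda_{\min}(\Sigma)\ge 2\pi\inf f>0$ from Assumption~\ref{ass:spectral_density_2} is also right.

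You correctly isolate the crux as $\mathbf 1^{\top}\Sigma_l^{-1}\mathbf 1\le C\,l^{D}$, and your spectral heuristic gives the right order. One caveat: the step you flag as delicate --- replacing $\Sigma^{-1}$ by the Toeplitz operator with symbol $(2\pi f)^{-1}$ --- is not really how the rigorous bound is obtained. The quantity $(\mathbf 1^{\top}\Sigma_l^{-1}\mathbf 1)^{-1}$ is the variance of the BLUE of the mean, i.e.\ $\min\{\int|A|^2 f : A(0)=1,\ \deg A<l\}$, and the upper bound you need on $\mathbf 1^{\top}\Sigma_l^{-1}\mathbf 1$ is a \emph{lower} bound on this extremal integral. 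This is Adenstedt's (1974) theorem for regularly varying spectral densities, proved via orthogonal-polynomial and weighted extremal arguments rather than a Toeplitz-inverse approximation; in practice one simply cites it. With that ingredient in hand, your decomposition yields the claimed inequality with the stated constants.
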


We consider $\sum_{k=\lfloor n^{\beta}\rfloor+1}^{m_n}\rho_{k, l_n, X}$ only, since the same argument yields $$\sum\limits_{k=\lfloor n^{\beta}\rfloor+1}^{m_n}\rho_{k, l_n, Y}=~o(m_n).$$

Let $\varepsilon>0$. By assumption,   $l_n\leq C_l n^{\alpha}$ for $\alpha\defeq\frac{1}{2}(1+D_X)-\varepsilon$ and some constant $C_l\in (0, \infty)$. 
As a consequence of Potter's Theorem, for every $\delta >0$, there exists a constant $C_{\delta}\in (0, \infty)$ such that $L_{\rho}(k)\leq C_{\delta}k^{\delta}$ for all $k\in \mathbb{N}$; see Theorem 1.5.6 in  \cite{bingham:1987}.

Moreover, we choose $\beta>\alpha$ and $n$ large enough such that $l_n< \frac{1}{2}\lfloor n^{\beta}\rfloor$.
According to this, Lemma \ref{LemC} yields
\begin{align*}
\frac{1}{m_n}\sum\limits_{k=\lfloor n^{\beta}\rfloor+1}^{m_n}\rho_{k, l_n, X}
=&\frac{1}{m_n}\sum\limits_{k=\lfloor n^{\beta}\rfloor-l_n+1}^{m_n-l_n}\rho_{k+l_n, l_n, X}\\
\leq &C_1l_n^{D_X}\frac{1}{m_n}\sum\limits_{k=\lfloor n^{\beta}\rfloor-l_n+1}^{m_n-l_n}k^{-D_X}L_{\rho}(k)\\
&+C_2
\frac{l_n^2}{m_n}\sum\limits_{k=\lfloor n^{\beta}\rfloor-l_n+1}^{m_n-l_n}k^{-D_X-1}\max\{L_{\rho}(k),1\}\\
\leq &C_{\delta}C_1\frac{l_n^{D_X}}{m_n}\sum\limits_{k=\lfloor n^{\beta}\rfloor/2}^{m_n-l_n}k^{-D_X+\delta}\\
&+C_{\delta}C_2\frac{l_n^2}{m_n}\sum\limits_{k=\lfloor n^{\beta}\rfloor/2}^{m_n-l_n}k^{-D_X-1+\delta}\\
\leq &C\left(n^{D_X\alpha-\beta D_X+\beta\delta} 
+ n^{2\alpha-\beta D_X-\beta +\delta\beta}\right)
\end{align*}
for some constant $C\in (0, \infty)$. 
By definition of $\alpha$ and for a suitable choice of $\beta$, the right-hand side of the above inequality converges to $0$.
 \end{proof}

\begin{proof}[Proof of Theorem \ref{thm:consistency_dcov_n}]
By the decomposition (2) in the main document,
 we obtain
\begin{align}
&\varphi_{X,Y;h}^{(n)}(s,t) -\varphi_X^{(n)}(s)\, \varphi_Y^{(n)}(t)\label{eq:basic-decomp}  \\
 =& -(\varphi_X^{(n)} (s)-\varphi_X(s))(\varphi_Y^{(n)}(t) -\varphi_Y(t))
\nonumber\\
& \quad + \frac{1}{n} \sum_{j=1}^{n-h} (\exp(isX_j)-\varphi_X(s)) (\exp(itY_{j+h})-\varphi_Y(t)) +o(1). \nonumber
\end{align}
We now study the terms on the right hand side separately. We first apply the ergodic theorem for Hilbert space-valued random variables to the $L^2(\R, w(s) \, ds)$-valued process $(\exp(i\, s \, X_j)-\varphi(s))_{j\geq 1}$, and obtain
\[
  \int \Big| \frac{1}{n} \sum_{j=1}^n e^{i\, s\, X_j} -\varphi_X(s)  \Big|^2 w(s)\, ds \longrightarrow 0,
\]
almost surely. In the same way, we have $\int \Big| \frac{1}{n} \sum_{j=1}^n e^{i\, t\, Y_j} -\varphi_Y(t)  \Big|^2 w(t)\, dt \longrightarrow 0 $, almost surely, and thus we finally get
\begin{align*}
   &\iint \Big|   \Big( \frac{1}{n} \sum_{j=1}^n e^{i\, s\, X_j} -\varphi_X(s)     \Big)  \Big(\frac{1}{n} \sum_{j=1}^n e^{i\, t\, Y_j} -\varphi_Y(t)   \Big) \Big|^2 w(s,t) \, ds\, dt \\
     =  &\Big(  \int \Big| \frac{1}{n} \sum_{j=1}^n e^{i\, s\, X_j} -\varphi_X(s)  \Big|^2 w(s)\, ds \Big)
  \Big( \int \Big| \frac{1}{n} \sum_{j=1}^n e^{i\, t\, Y_j} -\varphi_Y(t)  \Big|^2 w(t)\, dt\Big)  \longrightarrow 0,
\end{align*}
again almost surely, as $n\rightarrow \infty$. In order to analyze the second term in \eqref{eq:basic-decomp}, we apply the ergodic theorem for Hilbert space-valued random variables to the  $L^2(\R^2, w(s,t) \, ds)$-valued process $\big((\exp(i\, s \, X_j)-\varphi(s))(\exp(i\, t\, Y_{j+h})-\varphi_Y(t))\big)_{j\geq 1}$. Observe that 
\[
  \E\Big( (\exp(i\, s \, X_j)-\varphi(s))(\exp(i\, t\, Y_{j+h})-\varphi_Y(t)) \Big)=\varphi_{X,Y;h}(s,t)-\varphi_X(s)\varphi_Y(t),
\]
and hence we obtain  
\begin{multline*}
   \iint \Big| \frac{1}{n} \sum_{j=1}^{n-h} \big(\exp^{isX_j}-\varphi_X(s)\big) \big(e^{itY_j}-\varphi_Y(t)\big) \\
   -  \big(\varphi_{X,Y;h}(s,t)-\varphi_X(s)\varphi_Y(t)\big) \Big|^2 w(s,t)\, ds\, dt 
   \rightarrow 0, 
\end{multline*}
which implies almost sure convergence of $\iint \big|(\exp(i\, s \, X_j)-\varphi(s))(\exp(i\, t\, Y_{j+h})-\varphi_Y(t)) \big|^2 w(s,t)\, ds\,dt$
to $\iint \big|\varphi_{X,Y;h}(s,t)-\varphi_X(s)\varphi_Y(t)\big|^2 w(s,t) \, ds\, dt$.
\end{proof}

\begin{proof}[Proof of Theorem \ref{thm:1_parameter}]
Since, due to Fubini's theorem,   
\[
\int_{\mathbb{R}} \E \left|f_t(X_1)\right|^2w(t)dt=\E\left(\|f_{X}\|_{2}^2\right)<\infty
\] 
by assumption, we have  $\E \left|f_t(X_1)\right|^2<\infty$ for almost every $t$, so that it is possible to expand the function $f_t$ in Hermite polynomials, meaning that 
\begin{align*}
f_t(X_j)-\E f_t(X_j)\overset{L^2}{=}\sum\limits_{q=r}^{\infty}\frac{J_q(t)}{q!}H_q(X_j), 
\end{align*}
i.e.
\begin{align*}
\lim\limits_{n\rightarrow \infty}\left\|f_t(X_j)-\E f_t(X_j)-\sum\limits_{q=r}^{n}\frac{J_q(t)}{q!}H_q(X_j)\right \|_{L^2}
= 0, 
\end{align*}
where $\|\cdot\|_{L^2}$ denotes the norm induced by the inner product \eqref{eq:inner_product}.

We will see that the first summand in the Hermite expansion of the function $f_t$ determines the asymptotic behaviour of the  sum.

To this end, we  show  $L_2$-convergence of
\begin{align*}
 n^{\frac{rD}{2}-1}L^{-\frac{r}{2}}(n)\sum\limits_{j=1}^n\left(f_t(X_j)-\E  f_t(X_j) -\frac{1}{r!}J_r(t)H_r(X_j)\right).
\end{align*}
 Fubini's theorem  yields
\begin{align*}
&\E\left(\int_{\mathbb{R}} \left|n^{\frac{rD}{2}-1}L^{-\frac{r}{2}}(n)\sum\limits_{j=1}^n\left(f_t(X_j)-\E f_t(X_j)-\frac{1}{r!}J_r(t)H_r(X_j)\right)\right|^2w(t)dt\right)\\
&=\int_{\mathbb{R}} \E\left(\left|n^{\frac{rD}{2}-1}L^{-\frac{r}{2}}(n)\sum\limits_{j=1}^n\left(f_t(X_j)-\E f_t(X_j)-\frac{1}{r!}J_r(t)H_r(X_j)\right)\right|^2\right)w(t)dt.
\end{align*}
Since $\E\pr{H_q(X_i)H_{q'}(X_j)}=0$ for $q\neq q'$ and $\E\pr{H_q(X_i)H_{q}(X_j)}=q!\rho(i-j)^q$, we have
\begin{align*}
&\E\left(\left|\sum\limits_{j=1}^n\left(f_t(X_j)-\E f_t(X_j)\right)  -\frac{1}{r!}J_r(t)\sum\limits_{j=1}^nH_r(X_j)\right|^2\right)\\
=&\E\left(\left|\sum\limits_{j=1}^n\sum\limits_{q=r+1}^{\infty}\frac{1}{q!}J_q(t)H_q(X_j)\right|^2\right)\\
 =&\sum\limits_{q=r+1}^{\infty}\frac{1}{q!^2}\left|J_q(t)\right|^2
\sum\limits_{i=1}^n\sum\limits_{j=1}^n\E\left(H_q(X_i)H_q(X_j)\right)\\
\leq &\sum\limits_{q=r+1}^{\infty}\frac{1}{q!}\left|J_q(t)\right|^2
\sum\limits_{i=1}^n\sum\limits_{j=1}^n|\rho(i-j)|^{q}\\
\leq &
\sum\limits_{q=r+1}^{\infty}\frac{1}{q!}\left|J_q(t)\right|^2
\sum\limits_{i=1}^n\sum\limits_{j=1}^n|\rho(i-j)|^{r+1}.
\end{align*}
In general, i.e. for an auto-covariance function $
\rho(k)= k^{-D}L(k), \  \text{as } k\rightarrow \infty,$ 
where $0~<~D<~1$ and where $L$ is a slowly varying function, it holds that
\begin{align*}
\sum\limits_{i=1}^n\sum\limits_{j=1}^n|\rho
(i-j)|^{r+1}=\mathcal{O}\pr{n^{1\vee (2-(r+1)D)}L^{\prime}(n)},
\end{align*}
where $L^{\prime}$ is some slowly varying function; see p. 1777 in 
\cite{dehling:taqqu:1989}. 

As a result, the previous considerations establish
\begin{align*}
&\E\left(\int_{\mathbb{R}} \left|n^{\frac{rD}{2}-1}L^{-\frac{r}{2}}(n)\sum\limits_{j=1}^n\left(f_t(X_j)-\E f_t(X_j)-\frac{1}{r!}J_r(t)H_r(X_j)\right)\right|^2w(t)dt\right)\\
&=\mathcal{O}\pr{n^{rD-2}L^{-r}(n)n^{1\vee (2-(r+1)D)}L^{\prime}(n)\int_{\mathbb{R}} \sum\limits_{q=r+1}^{\infty}\frac{1}{q!}\left|J_q(t)\right|^2w(t)dt}.
\end{align*}
Since $\sum_{q=r+1}^{\infty}\frac{1}{q!}\left|J_q(t)\right|^2\leq\sum_{q=1}^{\infty}\frac{1}{q!}\left|J_q(t)\right|^2=\E \left|f_t(X_1)-\E f_t(X_1)\right|^2$,
 we  conclude that the right-hand side of the above equality is
\begin{align*}
\mathcal{O}\left(n^{-\min{(1-rD, D)}}\tilde{L}(n)\int_{\mathbb{R}} \E \left|f_t(X_1)\right|^2w(t)dt\right)
\end{align*}
for some slowly varying function $\tilde{L}$.
This expression is $o(n^{-\delta})$ for some $\delta>0$ as $D<\frac{1}{r}$ and $\int_{\mathbb{R}} \E \left|f_t(X_1)\right|^2w(t)dt=\E\left(\|f_{X}\|_{2}^2\right)<\infty$ by assumption. 
The assertion then follows from the fact that 
\begin{align*}
n^{\frac{rD}{2}-1}L^{-\frac{r}{2}}(n)\sum\limits_{j=1}^nH_r(X_j)\overset{\mathcal{D}}{\longrightarrow} Z_{r, H}(1);
\end{align*}
see \cite{taqqu:1979} and \cite{dobrushin:major:1979}.
\end{proof}

\begin{proof}[Proof of Proposition \ref{prop:reduction}]
 Note that 
\begin{align*}
 &\left(\varphi_X^{(n)}(s)-\varphi_X(s)\right)\left(\varphi_Y^{(n)}(t)-\varphi_Y(t)\right)-
 J_1(s)\frac{1}{n}\sum\limits_{i=1}^nX_i J_1(t)\frac{1}{n}\sum\limits_{j=1}^nY_j 
\\
=&
\left(\varphi_X^{(n)}(s)-\varphi_X(s)-J_1(s)\frac{1}{n}\sum\limits_{i=1}^nX_i\right)\left(\varphi_Y^{(n)}(t)-\varphi_Y(t)\right)\\
&+ J_1(s)\frac{1}{n}\sum\limits_{i=1}^nX_i\left(\varphi_Y^{(n)}(t)-\varphi_Y(t)-J_1(t)\frac{1}{n}\sum\limits_{j=1}^nY_j\right).
\end{align*}
For the first summand on the right-hand side of the above equation Corollary  2.2
implies
\begin{align*}
  &\left\|\varphi_X^{(n)}(s)-\varphi_X(s)- J_1(s)\frac{1}{n}\sum\limits_{i=1}^nX_i
 \right\|_{2}\left\|\varphi_Y^{(n)}(t)-\varphi_Y(t)\right\|_{2}\\
 &=o_P\left(n^{-\frac{D_X}{2}}L_X^{\frac{1}{2}}(n)\right)
 \mathcal{O}_P\left(n^{-\frac{D_Y}{2}}L_Y^{\frac{1}{2}}(n)\right)\\
 &=o_P\left(n^{-\frac{D_X}{2}}L_X^{\frac{1}{2}}(n)n^{-\frac{D_Y}{2}}L_Y^{\frac{1}{2}}(n)\right).
\end{align*}
The second summand is    $o_P\left(n^{-\frac{D_X}{2}}L_X^{\frac{1}{2}}(n)n^{-\frac{D_Y}{2}}L_Y^{\frac{1}{2}}(n)\right)$ by means of the same argument.
\end{proof}

\begin{proof}[Proof of Proposition \ref{prop:reduction_LRD_2}]

Define the function 
\[
  f_{s,t}(x,y) := (e^{i\, s\, x}-\varphi_X(s)) (e^{i\, t\, y}-\varphi_Y(t)).
\]
For fixed values of $(s,t)$, this is a bounded function of $(x,y)$, which allows for an expansion in bivariate Hermite polynomials
\[
  f_{s,t}(x,y) =\sum_{q=r}^\infty \sum_{k,l\geq 0: k+l=q} \frac{J_{k,l}}{k!\, l!} H_k(x) H_l(y),
\]
where $J_{k,l}(s,t)=\E[f_{s,t}(X_1,Y_{1+h})H_k(X_1)H_l(Y_{1+h})]$, and where $r=r(s,t) =\min\{k+l: J_{k,l}(s,t)=0\}$ Note that, by indenpendence of the processes $(X_j){j\geq 1}$ and $(Y_j)_{j\geq 1}$, the Hermite coefficient $J_{k,l}(s,t)$ does not depend on the lag $h$. In addition, note that $H_0(x) \equiv 1$, and thus $J_{0,l}(s,t)=J_{k,0}(s,t)\equiv 0$ for all  indices $k,l\geq 0$. Thus, we obtain
\[
    f_{s,t}(x,y) =\sum_{q=3}^\infty \sum_{k,l\geq 0: k+l=q} \frac{J_{k,l}}{k!\, l!} H_k(x) H_l(y).
\]
This is an expansion in the Hilbert space $L_2(\R^2,\varphi_I)$, and thus we obtain
\begin{equation}
\sum_{j=1}^{n-h} f_{s,t}(X_j,Y_{j+h})  = \sum_{q=2}^\infty \sum_{k,l\geq 1: k+l=q} \frac{J_{k,l}(s,t)}{k!\, l!} 
\sum_{j=1}^{n-h} H_k(X_j)\, H_l(Y_{j+h}).
\label{eq:hermite}
\end{equation}
In what follows, we will show that the sum on the right hand side is dominated by the lowest order term 
$J_{1,1}(s,t) \sum_{j=1}^{n-h}X_j\, Y_{j+h}$. First, we observe that 
\begin{align*}
J_{1,1}(s,t) 
& =  \E\big[ (e^{i\, s\, X_1} -\varphi_X(s)) \, X_1\, (e^{i\, t\, Y_1} -\varphi_Y(t))\, Y_1  \big] \\
&= \E[X_1 \, e^{i\,s\,X_1}]  \,  \E[Y_1 \, e^{i\,s\,Y_1}]  =(i\, s\, e^{-\frac{s^2}{2}}) (i\, t\, e^{-\frac{t^2}{2}}) = -s\, t\, e^{-\frac{s^2+t^2}{2}}.
\end{align*}
Using this identity, the fact that $J_{0,2}(s,t)=J_{2,0}(s,t)=0$, and equation \eqref{eq:hermite}, we finally obtain
\[
 \sum_{j=1}^{n-h} \Big\{ f_{s,t}(X_j,Y_{j+h}) +s\, t\, e^{-\frac{s^2+t^2}{2}} \, X_j\, Y_{j+h}  \Big\}
 = \sum_{q=3}^\infty \sum_{k,l\geq 1:k+l=q} \frac{J_{k,l}(s,t)}{k!\, l!} 
\sum_{j=1}^{n-h} H_k(X_j)\, H_l(Y_{j+h}).
\]
By orthogononality of the Hermite polynomials, and using independence of the processes $(X_j)_{j\geq 1}$ and $(Y_j)_{j\geq 1}$, we obtain 
\begin{align*}
 & \E\big[ H_{k_1}(X_i) H_{l_1}(Y_{i+h}) H_{k_2}(X_j)H_{l_2}(Y_{j+h})   \big]\\[1mm]
 &\qquad = \E\big[ H_{k_1}(X_i)  H_{k_2}(X_j) \big] \E\big[ H_{l_1}(Y_{i+h}) H_{l_2}(Y_{j+h})   \big]\\[1mm]
 & \qquad =
 \left\{  
   \begin{array}{cl}
    k_1!l_1!\big[1\wedge  |j-i|^{-(k_1D_X+l_1D_Y)}L_X^{k_1}(j-i) L_Y^{l_1}(j-i)\big] & \mbox{ if } k_1=k_2 \mbox{ and } l_1=l_2 \\[1mm]
    0 & \mbox{ otherwise. }
   \end{array}
 \right.
\end{align*} 
Thus, 
\begin{align*}
 & \E\Big[\Big( \sum_{j=1}^{n-h} \big\{ f_{s,t}(X_j,Y_{j+h}) +s\, t\, e^{-\frac{s^2+t^2}{2}} \, X_j\, Y_{j+h}  \big\}
   \Big)^2  \Big]\\
 &\qquad   = \sum_{q=3}^\infty \sum_{k,l\geq 1: k+l=q} \frac{J_{k,l}^2(s,t)}{k! l!} \sum_{i=1}^{n-h} \sum_{j=1}^{n-h}  
\Big[1\wedge  |j-i|^{-(k \, D_X+l \, D_Y)}L_X^{k }(j-i) L_Y^{l}(j-i) \Big].
\end{align*}
By Karamata's theorem, a slowly varying function $L(n)$ grows slower than any power of $n$, i.e. for any $\epsilon>0$ there exists a constant $C=C_\epsilon$ such that 
\[
  L(n)\leq C\, n^\epsilon.
\]
Now, choose $\epsilon>0$ so small that that $\epsilon<\min(D_X,D_Y)/3$, and let $C\geq 1$ be such that $L_X(n)\leq C\, n^\epsilon$ and $L_Y(n)\leq C\, n^\epsilon$. Moreover, we assume without loss of generality that $D_X\leq D_Y$. Then we obtain
\begin{align*}
 \sum_{i=1}^{n-h} \sum_{j=1}^{n-h} \big[ 1\wedge  |j-i|^{-(k \, D_X+l \, D_Y)}L_X^{k }(j-i) L_Y^{l}(j-i)\big] &  \leq n+ 
 C \sum_{1\leq i\neq j \leq n-h}^{n-h}   |j-i|^{-(k\,(D_X-\epsilon) +l\, (D_Y-\epsilon))} \\
 &\leq n+ 2\, C\, \sum_{m=1}^{n} (n-m) m^{-(k\,(D_X-\epsilon) +l\, (D_Y-\epsilon))} \\
 & \leq 2\, C\, n\, \big(1+\sum_{m=1}^{n} m^{-(2\,(D_X-\epsilon) + (D_Y-\epsilon))} \big) \\
 & \leq \tilde{C}  \, n^{1\vee (2- 2(D_X-\epsilon)-(D_Y-\epsilon)) } \log n \\
 & = \tilde{C}\,  n^{1\vee((2-D_X-D_Y) -(D_X-3\epsilon) ) } \log n \\
 & = \tilde{C}\,  n^{2-D_X-D_Y} n^{-[(1-(D_X+D_Y))\wedge (D_X-3\, \epsilon)]} \log n,
\end{align*}
where $\tilde{C}=\tilde{C}(\epsilon, D_X,D_Y)$ is a constant that is independent of $k,l$ for $k, l\geq 1$ satisfying $k+l\geq 3$. 
Furthermore, we note that 
\[
  \sum_{q=3}^\infty \sum_{k, l\geq 1:k+l=q} \frac{J_{k,l}^2(s,t)}{k!l!}\leq
   \sum_{q=0}^\infty \sum_{k, l\geq 1:k+l=q} \frac{J_{k,l}^2(s,t)}{k!l!} =\E f_{s,t}^2(X_j,Y_{j+h}).
\]
Thus, putting everything together, we get
\begin{align*}
 & \E\Big[\Big( \sum_{j=1}^{n-h} \big\{ f_{s,t}(X_j,Y_{j+h}) +s\, t\, e^{-\frac{s^2+t^2}{2}} \, X_j\, Y_{j+h}  \big\}
   \Big)^2  \Big]\\
 & \leq   \tilde{C}\,  \big[n^{2-D_X-D_Y} n^{-[(1-(D_X+D_Y))\wedge (D_X-3\, \epsilon)]} \log n \big] \E f_{s,t}^2(X_j,Y_{j+h})   ,
\end{align*}
and thus we obtain by Fubini's theorem
\begin{align*}
& \E\Big[  \big\|  \frac{1}{n} \sum_{j=1}^{n-h} (e^{isX_j} -\varphi_X(s))(e^{itY_{j+h}} -\varphi_Y(t)) 
  + s\, t\, e^{-\frac{s^2+t^2}{2}} \frac{1}{n}\, \sum_{i=1}^{n-h}  X_jY_{j+h}   \big\|_2^2\Big] \\
&\quad= 
\E \Big[ \iint_{\R^2} \Big(\frac{1}{n} \sum_{j=1}^{n-h} \Big\{ f_{s,t}(X_j,Y_{j+h}) +s\, t\, e^{-\frac{s^2+t^2}{2}} \, X_j\, Y_{j+h}  \Big\}
   \Big)^2  ds\, dt \Big] \\
   & \quad \leq \tilde{C} \,  \big[n^{-(D_X+D_Y)} n^{-[(1-(D_X+D_Y))\wedge (D_X-3\, \epsilon)]} \log n \big] \iint_{\R^2} \E f_{s,t}^2(X_j,Y_{j+h})\,  ds\, dt \\
   &\quad =o\Big(n^{-(D_X+D_Y)} L_X(n) L_Y(n) \Big).
\end{align*}
In the last step, we have used the fact that $D_X+D_Y<1$, that $3\, \epsilon<D_X$, and that the integral $\iint_{\R^2} \E f_{s,t}^2(X_j,Y_{j+h})\,  ds\, dt $ is finite by the proof of Lemma \ref{lemma:existence_1}.
\end{proof}

\begin{proof}[Proof of Proposition \ref{prop:conv_LRD}]
It  holds that
\begin{align*}
\sum\limits_{i=1}^n\sum\limits_{j=1}^nX_iY_j
=&\sum\limits_{j=1}^n\int_{\left[-\pi, \pi\right)}\left(\e^{ix}\right)^jZ_{G, X}(dx)\sum\limits_{k=1}^n\int\limits_{\left[-\pi, \pi\right)}\left(\e^{iy}\right)^kZ_{G, Y}(dy)\\
=& \int_{\left[-\pi, \pi\right)^2}\e^{i(x+y)}\left(\sum\limits_{j=0}^{n-1}\left(\e^{ix}\right)^j\right)\left(\sum\limits_{j=0}^{n-1}\left(\e^{iy}\right)^j\right)Z_{G, X}(dx)Z_{G, Y}(dy)\\
=&\int_{\left[-\pi, \pi\right)^2} \e^{i(x+y)}\left(\frac{\e^{ixn}-1}{\e^{ix}-1}\right)\left(\frac{\e^{iyn}-1}{\e^{iy}-1}\right)Z_{G, X}(dx)Z_{G, Y}(dy).
\end{align*}

By the change of variables formula, it follows that
\begin{align*}
&n^{\frac{D_X+D_Y}{2}-2}L_X^{-\frac{1}{2}}(n)L_Y^{-\frac{1}{2}}(n)\sum\limits_{i=1}^n\sum\limits_{j=1}^nX_iY_j\\=&n^{\frac{D_X+D_Y}{2}-2}L_X^{-\frac{1}{2}}(n)L_Y^{-\frac{1}{2}}(n)\int\limits_{\left[-\pi, \pi\right)^2}   \e^{i(x+y)}\left(\frac{\e^{ixn}-1}{\e^{ix}-1}\right)\left(\frac{\e^{iyn}-1}{\e^{iy}-1}\right)
Z_{G, X}(dx)Z_{G, Y}(dy)\\
=&\int\limits_{\left[-n\pi, n\pi\right)^2} \e^{i\frac{x+y}{n}}\frac{1}{n^2}\left(\frac{\e^{ix}-1}{\e^{i\frac{x}{n}}-1}\right)\left(\frac{\e^{iy}-1}{\e^{i\frac{y}{n}}-1}\right)
Z_{G, X}^{(n)}(dx)Z_{G, Y}^{(n)}(dy),
\end{align*}
where 
\begin{align*}
&Z_{G, X}^{(n)}(A)=\sqrt{n^{D_X}L_X^{-1}(n)}Z_{G, X}\left(\frac{A}{n}\right)=n^{\frac{D_X}{2}}L_X^{-\frac{1}{2}}(n)Z_{G, X}\left(\frac{A}{n}\right)
\intertext{and}
&Z_{G, Y}^{(n)}(A)=\sqrt{n^{D_Y}L_Y^{-1}(n)}Z_{G, Y}\left(\frac{A}{n}\right)=n^{\frac{D_Y}{2}}L_Y^{-\frac{1}{2}}(n)Z_{G, Y}\left(\frac{A}{n}\right).
\end{align*}

Moreover, it holds that
\begin{align*}
\sum\limits_{j=1}^nX_jY_j
&=\sum\limits_{j=1}^n\int_{\left[-\pi, \pi\right)}\e^{ixj}Z_{G, X}(dx)\int_{\left[-\pi, \pi\right)}\e^{iyj}Z_{G, Y}(dy)\\
&=\int_{\left[-\pi, \pi\right)^2}\sum\limits_{j=1}^n\e^{i(x+y)j}Z_{G, X}(dx)Z_{G, Y}(dy)\\
&=\int_{\left[-\pi, \pi\right)^2}\e^{i(x+y)}\sum\limits_{j=0}^{n-1}\e^{i(x+y)j}Z_{G, X}(dx)Z_{G, Y}(dy)\\
&=\int_{\left[-\pi, \pi\right)^2}\e^{i(x+y)}\frac{\e^{i(x+y)n}-1}{\e^{i(x+y)}-1}Z_{G, X}(dx)Z_{G, Y}(dy).
\end{align*}

Again, the change of variables formula yields
\begin{align*}
&n^{\frac{D_X+D_Y}{2}-1}L_X^{-\frac{1}{2}}L_Y^{-\frac{1}{2}}\sum\limits_{j=1}^nX_jY_j\\
=&n^{\frac{D_X+D_Y}{2}-1}L_X^{-\frac{1}{2}}L_Y^{-\frac{1}{2}}\int\limits_{\left[-\pi, \pi\right)^2} \e^{i(x+y)}\left(\frac{\e^{i(x+y)n}-1}{\e^{i(x+y)}-1}\right)
Z_{G, X}(dx)Z_{G, Y}(dy)\\
=&\int\limits_{\left[-n\pi, n\pi\right)^2} \e^{i\frac{x+y}{n}}\left(\frac{\e^{i(x+y)}-1}{n\left(\e^{i\frac{x+y}{n}}-1\right)}\right)
Z_{G, X}^{(n)}(dx)Z_{G, Y}^{(n)}(dy),
\end{align*}
where 
\begin{align*}
&Z_{G, X}^{(n)}(A)=\sqrt{n^{D_X}L_X^{-1}(n)}Z_{G, X}\left(\frac{A}{n}\right)=n^{\frac{D_X}{2}}L_X^{-\frac{1}{2}}(n)Z_{G, X}\left(\frac{A}{n}\right)
\intertext{and}
&Z_{G, Y}^{(n)}(A)=\sqrt{n^{D_Y}L_Y^{-1}(n)}Z_{G, Y}\left(\frac{A}{n}\right)=n^{\frac{D_Y}{2}}L_Y^{-\frac{1}{2}}(n)Z_{G, Y}\left(\frac{A}{n}\right).
\end{align*}
With the results of \cite{major:2020} it then follows that
\begin{align*}
n^{\frac{D_X+D_Y}{2}-2}L_X^{-\frac{1}{2}}L_Y^{-\frac{1}{2}}\sum\limits_{i=1}^n\sum\limits_{j=1}^nX_iY_j-n^{\frac{D_X+D_Y}{2}-1}L_X^{-\frac{1}{2}}L_Y^{-\frac{1}{2}}\sum\limits_{j=1}^nX_jY_j
\end{align*}
converges in distribution to 
\begin{align*}
\int_{\left[-\pi, \pi\right)^2}\left[\left(\frac{\e^{ix}-1}{ix}\right)\left(\frac{\e^{iy}-1}{iy}\right)-\frac{\e^{i(x+y)}-1}{i(x+y)}\right]
Z_{G, X, 0}(dx)Z_{G, Y, 0}(dy).
\end{align*}
\end{proof}

In order to prove Proposition \ref{prop:convergence_SRD}, 
we apply the following theorem that corresponds to a multivariate generalization of  Theorem 2 in \cite{cremers:kadelka:1986} for stochastic processes with paths in $L^p(S, \mu)$, where $\pr{S, \mathcal{S}, \mu}$ is a $\sigma$-finite measure space,  when choosing $p=2$. 

\begin{theorem}\label{thm:cremers_kadelka}
Let $\pr{S, \mathcal{S}, \mu}$ be a $\sigma$-finite measure space and  let $(\xi_n^1, \ldots, \xi_n^k)$, $n\geq 1$,  be a sequence of stochastic processes with paths in the product space $L^2(S, \mu)\otimes \cdots \otimes L^2(S, \mu)$. Then $$(\xi_n^1, \ldots, \xi_n^k)\overset{\mathcal{D}}{\longrightarrow} (\xi_0^1, \ldots, \xi_0^k), $$
 where $\overset{\mathcal{D}}{\longrightarrow}$ denotes convergence in $L^2(S, \mu)\otimes \cdots \otimes L^2(S, \mu)$, provided the finite dimensional distributions of $(\xi_n^1, \ldots, \xi_n^k)$ converge weakly to those of $(\xi_0^1, \ldots, \xi_0^k)$ almost everywhere and provided the following conditions hold: 
for some positive, $\mu$-integrable functions $f_i$, $i=1, \ldots, k$, it holds that
\begin{align*}
\E \left|\xi^i_n(s)\right|^{2}\leq f_i(s) \ \text{for all $s\in S$, $n\in \mathbb{N}$,}  \intertext{and}  \E \left|\xi^i_n(s)\right|^{2}\longrightarrow\E \left|\xi^i_0(s)\right|^{2} \ \text{for all $s\in S$.}
\end{align*}
\end{theorem}

Since  $L^2(S, \mu)\otimes \cdots \otimes L^2(S, \mu)\cong L^2(S\times \cdots \times S, \mu\times \cdots \times\mu)$, 
Theorem \ref{thm:cremers_kadelka}
is an immediate consequence of Theorem 2 in \cite{cremers:kadelka:1986}.

\begin{proof}[Proof of Proposition \ref{prop:convergence_SRD}]

In order to show convergence of the finite dimensional distributions  we have to prove that for fixed $k$ and $s_1, t_1, \ldots, s_k, t_k \in \mathbb{R}$
\begin{align*}
&Z_n\defeq\left(Z_{n, 1}, \ldots, Z_{n, k}\right)^\top, \ \text{ where } \
Z_{n, i}\defeq \left( Z_n(s_i, t_i; 0),  \ldots, Z_n(s_i, t_i; H) \right),
\end{align*}
converges in distribution  to the corresponding finite dimensional distribution of a complex Gaussian random variable $Z\defeq\left(Z_{1}, Z_{2}, \ldots, Z_{k}\right)^\top$, where $Z_j=(Z_{j, 0}, \ldots, Z_{j,H})$. 

Due to the  Cram\'{e}r-Wold theorem, for this we have to show that  for all $\lambda_{i, j}$, $\eta_{i, j} \in \mathbb{R}$, $i=0, \ldots, H$, $j=1, \ldots, k$
\begin{align*}
&\sum\limits_{i=0}^H\sum\limits_{j=1}^k\lambda_{i,j} \Ret(Z_{n}(s_j, t_j; i))+\sum\limits_{i=0}^H\sum\limits_{j=1}^k\eta_{i, j} \Imt(Z_{n}(s_j, t_j; i))\\
&\overset{\mathcal{D}}{\longrightarrow}\sum\limits_{i=0}^H\sum\limits_{j=1}^k\lambda_{i, j} \operatorname{Re} (Z_{j, i})+\sum\limits_{i=0}^H\sum\limits_{j=1}^k\eta_{i, j} \Imt(Z_{j, i}).
\end{align*}

To this end, note that 
\begin{align*}
&\sum\limits_{i=0}^H\sum\limits_{j=1}^k\lambda_{i,j} \Ret(Z_{n}(s_j, t_j; i))+\sum\limits_{i=0}^H\sum\limits_{j=1}^k\eta_{i, j} \Imt(Z_{n}(s_j, t_j; i))\\
&=\frac{1}{\sqrt{n}}\sum\limits_{l=1}^n K(X_l, Y_l, Y_{l+1}, \ldots, Y_{l+H}), 
\end{align*}
where
\begin{align*}
&K(X_l, Y_l, Y_{l+1}, \ldots, Y_{l+H})\\
&\defeq 
\sum\limits_{i=0}^H\sum\limits_{j=1}^k\lambda_{i,j} \Ret(f_{s_j, t_j}(X_l, Y_{l+i}))+\sum\limits_{i=0}^H\sum\limits_{j=1}^k\eta_{i, j}  \Imt(f_{s_j, t_j}(X_l, Y_{l+i})).
\end{align*}

In order to derive the asymptotic distribution of the above partial sum, we apply the following theorem that directly follows from Theorem 4 in \cite{arcones:1994}:

\begin{theorem}\label{thm:arcones}
Let $\mathbf{X}_k=(X_k^{(1)}, X_k^{(2)}, \ldots, X_k^{(d)})$, $k\geq 1$, be a stationary mean zero Gaussian sequence of $\mathbb{R}^d$-valued random vectors. Let $f$ be a function on $\mathbb{R}^d$ with Hermite rank $r$.
We define
\begin{align*}
\rho^{(p, q)}(k)\defeq\E\left(X_1^{(p)}X_{1+k}^{(q)}\right)
\end{align*}
for $k\geq 1$. Suppose that 
\begin{align*}
\sum\limits_{k=-\infty}^{\infty}\left|\rho^{(p, q)}(k)\right|^{r}<\infty
\end{align*}
for each $1\leq p, q\leq d$. Then, it holds that
\begin{align*}
\frac{1}{\sqrt{n}}\sum\limits_{j=1}^n\left(f(\mathbf{X}_j)-\E f(\mathbf{X}_j)\right)\overset{\mathcal{D}}{\longrightarrow}\mathcal{N}(0, \sigma^2),
\end{align*}
where
\begin{align*}
\sigma^2\defeq 
&\E\left[\left(f(\mathbf{X}_1)-\E f(\mathbf{X}_1)\right)^2\right]\\
&+2\sum\limits_{k=1}^{\infty}\E\left[\left(f(\mathbf{X}_1)-\E f(\mathbf{X}_1)\right)\left(f(\mathbf{X}_{1+k})-\E f(\mathbf{X}_{1+k})\right)\right].
\end{align*}
\end{theorem}

For an application of Theorem \ref{thm:arcones}, we have to compute the Hermite rank of $K$. For our purposes, however, it suffices to show that the Hermite rank is bigger than $1$.
For this,
we have to show that
\begin{align*}
\E\left(K(X_l, Y_l, Y_{l+1}, \ldots, Y_{l+H})H_{\mathbf{q}}^{*}(X_l, Y_l, Y_{l+1}, \ldots, Y_{l+H})\right)=0,
\end{align*}
whenever $\left|\mathbf{q}\right|=1$.

We distinguish two cases for $\mathbf{q}=(q_1, \ldots, q_{H+2})^{\top}$ with $\left|\mathbf{q}\right|=1$:
$q_1=1$ and $q_1=0$.

If $q_1=1$, $H_{\mathbf{q}}^{*}(X_l, Y_l, Y_{l+1}, \ldots, Y_{l+H})=X_l$
and $H_{\mathbf{q}}^{*}(X_l, Y_l, Y_{l+1}, \ldots, Y_{l+H})=Y_j$ for some $j\in \{l, \ldots, l+H\}$ if $q_1=0$.

If $q_1=1$, it follows that
\begin{align*}
&\E\left(K(X_l, Y_l, Y_{l+1}, \ldots, Y_{l+H})H_{\mathbf{q}}^{*}(X_l, Y_l, Y_{l+1}, \ldots, Y_{l+H})\right)\\
=&\E\left(K(X_l, Y_l, Y_{l+1}, \ldots, Y_{l+H})X_l\right)\\
=&
\sum\limits_{i=0}^H\sum\limits_{j=1}^k\lambda_{i,j}\E\left( \Ret(f_{s_j, t_j}(X_l, Y_{l+i}))X_l\right)+\sum\limits_{i=0}^H\sum\limits_{j=1}^k\eta_{i, j} \E\left( \Imt(f_{s_j, t_j}(X_l, Y_{l+i}))X_l\right).
\end{align*}

Moreover,   it holds that
\begin{align*}
\Ret\left(f_{s,t}(X_l, Y_{l+i})\right)
=& \cos(sX_l)\cos(tY_{l+i}) 
- \cos(sX_l)\E[\cos(tY_{l+i})]\\
 &     - \E[\cos(sX_l)]\cos(tY_{l+i}) 
 + \E[\cos(sX_l)]\E[\cos(tY_{l+i})] \\
& - \sin(sX_l)\sin(tY_{l+i}) 
 + \sin(sX_l)\E[\sin(tY_{l+i})]\\
&      + \E[\sin(sX_l)]\sin(tY_{l+i}) 
 - \E[\sin(sX_l)]\E[\sin(tY_{l+i})].
\end{align*}
and
\begin{align*}
\Imt\left(f_{s,t}(X_l, Y_{l+i})\right)
=& \cos(sX_l)\sin(tY_{l+i}) 
 - \cos(sX_l)\E[\sin(tY_{l+i})]\\
&      - \E[\cos(sX_l)]\sin(tY_{l+i}) 
 + \E[\cos(sX_l)]\E[\sin(tY_{l+i})] \\
& + \sin(sX_l)\cos(tY_{l+i}) 
 - \sin(sX_l)\E[\cos(tY_{l+i})]\\
&      - \E[\sin(sX_l)]\cos(tY_{l+i}) 
 + \E[\sin(sX_l)]\E[\cos(tY_{l+i})].
\end{align*} 
Since $X_l$ and $Y_{l+i}$ are independent, it follows that
  \begin{align*}
 \E\left( \Ret f_{s_j, t_j}\pr{X_l, Y_{l+i}}X_l\right)
=0, \
 \E\left( \Imt f_{s_j, t_j}\pr{X_l, Y_{l+i}}X_l\right)=0.
 \end{align*}
 If $q_1=0$, then there exists an $m\in\{0, \ldots, H\}$
 such that  $H_{\mathbf{q}}^{*}(X_l, Y_l, Y_{l+1}, \ldots, Y_{l+H})=Y_{l+m}$. Analogously to the previous considerations  it follows that
\begin{align*}
&\E\left(K(X_l, Y_l, Y_{l+1}, \ldots, Y_{l+H})H_{\mathbf{q}}^{*}(X_l, Y_l, Y_{l+1}, \ldots, Y_{l+H})\right)\\
=&\E\left(K(X_l, Y_l, Y_{l+1}, \ldots, Y_{l+H})Y_{l+m}\right)\\
=&
\sum\limits_{i=0}^H\sum\limits_{j=1}^k\lambda_{i,j}\E\left( \Ret(f_{s_j, t_j}(X_l, Y_{l+i}))Y_{l+m}\right)+\sum\limits_{i=0}^H\sum\limits_{j=1}^k\eta_{i, j} \E\left( \Imt(f_{s_j, t_j}(X_l, Y_{l+i}))Y_{l+m}\right)
\end{align*} 
and 
 \begin{align*}
 \E\left( \Ret f_{s_j, t_j}\pr{X_l, Y_{l+i}}Y_{l+m}\right)
=0, \
 \E\left( \Imt f_{s_j, t_j}\pr{X_l, Y_{l+i}}Y_{l+m}\right)=0.
 \end{align*}

 Therefore,  the Hermite rank $r$ of $K$ is bigger than $1$, 
 such that for $D_{\xi}, D_{\eta}\in~\left(\frac{1}{2}, 1\right)$
\begin{align*}
\sum\limits_{k=-\infty}^{\infty}\left|\rho_{\xi}(k)\right|^{r}\leq\sum\limits_{k=-\infty}^{\infty}\left|\rho(k)\right|^{2}<\infty,
\
\sum\limits_{k=-\infty}^{\infty}\left|\rho_{\eta}(k)\right|^{r}\leq\sum\limits_{k=-\infty}^{\infty}\left|\rho(k)\right|^{2}<\infty.
\end{align*}
As a result, Theorem \ref{thm:arcones} implies that
\begin{align*}
\frac{1}{\sqrt{n}}\sum\limits_{l=1}^n K(X_l, Y_l, Y_{l+1}, \ldots, Y_{l+H}) \overset{\mathcal{D}}{\longrightarrow}\mathcal{N}(0, \sigma^2),
\end{align*}
where
\begin{align*}
\sigma^2\defeq &
\E\left[K(X_1, Y_1, Y_{2}, \ldots, Y_{1+H})^2\right]\\
&+2\sum\limits_{k=1}^{\infty}\E\left[K(X_1, Y_1, Y_{2}, \ldots, Y_{1+H})K(X_{k+1}, Y_{k+1}, Y_{k+2}, \ldots, Y_{k+1+H})\right].
\end{align*}

According to Theorem \ref{thm:cremers_kadelka}, for a proof of Proposition \ref{prop:convergence_SRD}
it thus remains to show that 
for some positive, $w(s, t)dsdt$-integrable functions $f_h$
\begin{align*}
&\E \left|Z_n(s, t; h)\right|^{2}\leq f_h(s, t) \ \text{for all $\pr{s, t}\in \mathbb{R}^2$, $n\in \mathbb{N}$,} \ \intertext{and} &\lim\limits_{n\rightarrow\infty}\E \left|Z_n(s, t; h)\right|^{2}=\E \left|Z_h(s, t)\right|^{2} \ \text{for all $\pr{s, t}\in \mathbb{R}^2$.}
\end{align*}

For this, note that due to independence of $X_j$, $j\in \mathbb{N}$, and $Y_j$, $j\in \mathbb{N}$, it holds that
\begin{flalign*}
\E \left(\left|Z_n(s, t; h)\right|^2\right)
=\frac{1}{n}\sum\limits_{j=1}^{n-h}\sum\limits_{k=1}^{n-h}&\E \left(\exp(isX_j)-\varphi_X(s)\right)\left(\exp(-isX_k)-\varphi_X(-s)\right)\\
&\E \left(\exp(itY_{j+h})-\varphi_Y(t)\right)\left(\exp(-itY_{k+h})-\varphi_Y(-t)\right).
\end{flalign*}

An expansion in Hermite polynomials yields
\begin{align*}
\exp(isX_j)-\varphi_X(s)=&\sum\limits_{l=1}^{\infty}\frac{1}{l!}\E\pr{\cos(sG(\xi_j))H_l(\xi_j)}H_l(\xi_j)\\
&+i\sum\limits_{l=1}^{\infty}\frac{1}{l!}\E\pr{\sin(sG(\xi_j))H_l(\xi_j)}H_l(\xi_j)
\end{align*}
and
\begin{align*}
\exp(-isX_k)-\varphi_X(-s)
=&\sum\limits_{l=1}^{\infty}\frac{1}{l!}\E\pr{\cos(sG(\xi_k))H_l(\xi_k)}H_l(\xi_k)\\
&-i\sum\limits_{l=1}^{\infty}\frac{1}{l!}\E\pr{\sin(sG(\xi_k))H_l(\xi_k)}H_l(\xi_k).
\end{align*}
Since
\begin{align*}
\Cov(H_l(\xi_j)H_m(\xi_k))
=\begin{cases}
\rho^l_\xi(j-k)l! \ &\text{ if } l= m\\
0  \ &\text{ if } l\neq m
\end{cases},
\end{align*}
we thus have
\begin{align*}
&\E \left(\exp(isX_j)-\varphi_X(s)\right)\left(\exp(-isX_k)-\varphi_X(-s)\right)\\
=&\sum\limits_{l=1}^{\infty}\frac{\left(\E\pr{\cos(sG(\xi_1))H_l(\xi_1)}\right)^2+\left(\E\pr{\sin(sG(\xi_1))H_l(\xi_1)}\right)^2}{l!}\rho^l_\xi(j-k).
\end{align*}

With $J_{l, i}(s)\defeq \left(\E\pr{\cos(sG_i(\xi_j))H_l(\xi_j)}\right)^2+\left(\E\pr{\sin(sG_i(\xi_j))H_l(\xi_j)}\right)^2$, it holds that
\begin{align*}
\E \left(\left|Z_n(s, t; h)\right|^2\right)
=\frac{1}{n}\sum\limits_{j=1}^{n-h}\sum\limits_{k=1}^{n-h}
\sum\limits_{l=1}^{\infty}\sum\limits_{m=1}^{\infty}\frac{J_{l, 1}(s)J_{m, 2}(t)}{l!m!}\rho^l_{\xi}(j-k)\rho^m_{\eta}(j-k).
\end{align*}
For  $l, m\geq 1$ and $D_{\xi}, D_{\eta}\in \left(0, 1\right)$ with $D_{\xi}+D_{\eta}>1$, we have
\begin{align*}
\sum\limits_{j=1}^{n-h}\sum\limits_{k=1}^{n-h}\rho_{\xi}^{l}(j-k)\rho_{\eta}^{m}(j-k)\leq 
\sum\limits_{j=1}^{n-h}\sum\limits_{k=1}^{n-h}\rho_{\xi}(j-k)\rho_{\eta}(j-k)=\mathcal{O}\pr{n}.
\end{align*}
It follows that
\begin{align*}
\E \left(\left|Z_n(s, t; h)\right|^2\right)
= \mathcal{O}\pr{
\sum\limits_{l=1}^{\infty}\sum\limits_{m=1}^{\infty}\frac{J_{l, 1}(s)J_{m, 2}(t)}{l!m!}}.
\end{align*}

Since
\begin{align*}
\sum\limits_{l=1}^{\infty}\frac{J_{l, i}(s)}{l!}
=&\E \left(\exp(isX_1)-\varphi_X(s)\right)\left(\exp(-isX_1)-\varphi_X(-s)\right)\\
=&1-\varphi_X(s)\varphi_X(-s)\\
= &1
-\left[\E\pr{\cos(sG_i(\xi_1))}\right]^2-\left[\E\pr{\sin(sG_i(\xi_1))}\right]^2,
\end{align*}
 it follows by Lemma 1 in \cite{szekely:2007} that
\begin{align*}
\int_{\mathbb{R}}\sum\limits_{l=1}^{\infty}\frac{J_{l, i}(s)}{l!}(cs^2)^{-1}ds
=&\int_{\mathbb{R}}\left(1
-\left(\E\pr{\cos(sG_i(\xi_j))}\right)^2-\left(\E\pr{\sin(sG_i(\xi_j))}\right)^2\right)(cs^2)^{-1}ds\\
=&\int_{\mathbb{R}} \E\pr{1-\cos(s(X-X'))}(cs^2)^{-1}ds\\
=&\E\left(\int_{\mathbb{R}} \left(1-\cos(s(X-X'))\right)(cs^2)^{-1}ds\right)\\
\leq & C\E\left|X-X'\right|<\infty.
\end{align*}
As a result, we have
\begin{align*}
&\E \left|Z_n(s, t; h)\right|^{2}\leq f(s, t) \ \text{for all $\pr{s, t}\in \mathbb{R}^2$, $\; n\in \mathbb{N}$,}
\end{align*}
where, for some positive constant $C$,  $f(s, t)\defeq C\sum_{l=1}^{\infty}\frac{J_{l, 1}(s)}{l!}\sum_{m=1}^{\infty}\frac{J_{m, 2}(t)}{m!}$  is a positive, $w(s, t)dsdt$-integrable function .

Moreover, convergence of $\E \left|Z_n(s, t; h)\right|^{2}$ and $\E \left(Z_n(s, t; h)\right)^{2}$ follows by the dominated convergence theorem.

As limits we obtain
\begin{align*}
\E \left(\left|Z_n(s, t; h)\right|^2\right)
=&
\sum\limits_{k=-(n-h-1)}^{n-h-1}\left(1-\frac{|k|}{n}\right)\E\pr{f_{s, t}(X_1, Y_1)\overline{f_{s, t}(X_{k+1}, Y_{k+1})}}\\
&\longrightarrow\sum\limits_{k=-\infty}^{\infty}
\E \pr{f_{s, t}(X_1, Y_1)\overline{f_{s, t}(X_{k+1}, Y_{k+1})}},
\end{align*}
and
\begin{align*}
\E \left(\left(Z_n(s, t: h)\right)^2\right)
=&
\sum\limits_{k=-(n-h-1)}^{n-h-1}\left(1-\frac{|k|}{n}\right)\E\pr{f_{s, t}(X_1, Y_1)f_{s, t}(X_{k+1}, Y_{k+1})}\\
&\longrightarrow\sum\limits_{k=-\infty}^{\infty}
\E \pr{f_{s, t}(X_1, Y_1)f_{s, t}(X_{k+1}, Y_{k+1})}.
\end{align*}

Analogous computations show that
for $i, j\in \{0, \ldots, H\}$,  $i\leq j$, 
\begin{flalign*}
\E \left(Z_n(s, t; i)\overline{Z_n(s, t; j)}\right)
\ignore{=&\frac{1}{n}\sum\limits_{l=1}^{n-j}\sum\limits_{k=1}^{n-i}\E\pr{f_{s, t}(X_l, Y_{l+i})\overline{f_{s, t}(X_{k}, Y_{k+j})}}\\
=&\frac{1}{n}\sum\limits_{l=1}^{n-j}\sum\limits_{k=1}^{n-j}\E\pr{f_{s, t}(X_l, Y_{l+i})\overline{f_{s, t}(X_{k}, Y_{k+j})}}\\
&+\frac{1}{n}\sum\limits_{l=1}^{n-j}\sum\limits_{k=n-j+1}^{n-i}\E\pr{f_{s, t}(X_l, Y_{l+i})\overline{f_{s, t}(X_{k}, Y_{k+j})}}\\
&=\frac{n-j}{n}\sum\limits_{k=-(n-j-1)}^{n-j-1}\left(1-\frac{|k|}{n-j}\right)\E\pr{f_{s, t}(X_{1}, Y_{1+i})\overline{f_{s, t}(X_{k+1}, Y_{k+1+j})}}+o(1)\\
&}\longrightarrow\sum\limits_{k=-\infty}^{\infty}
\E \pr{f_{s, t}(X_1, Y_{1+i})\overline{f_{s, t}(X_{k+1}, Y_{k+1+j})}},
\end{flalign*}
while
\begin{flalign*}
\E \left(Z_n(s, t; i)Z_n(s, t; j)\right)
\ignore{=&\frac{1}{n}\sum\limits_{l=1}^{n-j}\sum\limits_{k=1}^{n-i}\E\pr{f_{s, t}(X_l, Y_{l+i})f_{s, t}(X_{k}, Y_{k+j})}\\
=&\frac{1}{n}\sum\limits_{l=1}^{n-j}\sum\limits_{k=1}^{n-j}\E\pr{f_{s, t}(X_l, Y_{l+i})f_{s, t}(X_{k}, Y_{k+j})}\\
&+\frac{1}{n}\sum\limits_{l=1}^{n-j}\sum\limits_{k=n-j+1}^{n-i}\E\pr{f_{s, t}(X_l, Y_{l+i})f_{s, t}(X_{k}, Y_{k+j})}\\
&=\frac{n-j}{n}\sum\limits_{k=-(n-j-1)}^{n-j-1}\left(1-\frac{|k|}{n-j}\right)\E\pr{f_{s, t}(X_{1}, Y_{1+i})f_{s, t}(X_{k+1}, Y_{k+1+j})}+o(1)\\
&}\longrightarrow\sum\limits_{k=-\infty}^{\infty}
\E \pr{f_{s, t}(X_1, Y_{1+i})f_{s, t}(X_{k+1}, Y_{k+1+j})}.
\end{flalign*}

\end{proof}

\begin{proof}[Proof of Theorem \ref{thm:sample_covariance}]
Without loss of generality we assume that $h=0$.
Note that 
\begin{align*}
\sum_{i=1}^n(X_i-\bar{X})(Y_i-\bar{Y})
=\sum\limits_{i=1}^n X_iY_i -\frac{1}{n}\sum\limits_{i=1}^n X_i\sum\limits_{i=1}^n Y_i.
\end{align*}

Thus, for $D_{\xi} + D_{\eta}<1$ and $G_1=G_2=\text{id}$, we have 
\begin{align*}
&n^{\frac{D_X+D_Y}{2}}L_X^{-\frac{1}{2}}(n)L_Y^{-\frac{1}{2}}(n)\frac{1}{n}\sum_{i=1}^n(X_i-\bar{X})(Y_i-\bar{Y})\\
=&n^{\frac{D_X+D_Y}{2}-1}L_X^{-\frac{1}{2}}(n)L_Y^{-\frac{1}{2}}(n)\sum\limits_{i=1}^n X_iY_i-n^{\frac{D_X+D_Y}{2}-2}L_X^{-\frac{1}{2}}(n)L_Y^{-\frac{1}{2}}(n)\sum\limits_{i=1}^n X_i\sum\limits_{j=1}^n Y_j.
\end{align*}

According to the proof of Proposition \ref{prop:conv_LRD} the above expression converges to 
\begin{align*}
\int_{\left[-\pi, \pi\right)^2}\left[\left(\frac{\e^{ix}-1}{ix}\right)\left(\frac{\e^{iy}-1}{iy}\right)-\frac{\e^{i(x+y)}-1}{i(x+y)}\right]
Z_{G, X, 0}(dx)Z_{G, Y, 0}(dy).
\end{align*}
\end{proof}

\begin{landscape}

\section{Additional simulation results}\label{app:sim}		
\begin{table}[htbp]
\caption{Rejection rates of the hypothesis tests resulting from the empirical distance covariance and the empirical covariance 
				obtained by  subsampling   based on \enquote{linearly} correlated 
				time series $X_j, \ j=1, \ldots, n$, $Y_j, \ j=1, \ldots, n$ according to
Section \ref{subsec:simulations} 
with block length $l_n$, $d=0.1n$, and Hurst parameters $H$.
				The level of significance equals 5\%.}
\begin{tabular}{cccccccccccccccccc}
 & & &   \multicolumn{7}{c}{distance covariance} &  & \multicolumn{7}{c}{covariance}\\
 				\cline{5-9}   \cline{13-17}\\
 \\
				& &  & & $H=0.6$ & &  & & $H=0.7$ & &  & & $H=0.6$ & &  & & $H=0.7$ & \\ 
				\cline{4-6}   \cline{8-10} \cline{12-14} \cline{16-18} \\
				& $n$ &  & $r=0$ & $r=0.25$ & $r=0.5$ &  & $r=0$ & $r=0.25$ & $r=0.5$ &  & $r=0$ & $r=0.25$ & $r=0.5$ &  &$r=0$ & $r=0.25$ & $r=0.5$ \\ 
					\hline\\
				\multirow{4}{*}{\rotatebox{90}{$l_n=n^{0.4}$}}  &
100 &  & 0.131 & 0.713 & 0.999 &  & 0.175 & 0.745 & 0.997 &  &   0.125 & 0.770 & 1,000 &  & 0.172 & 0.772 & 0.999 \\ 
&300 &  & 0.095 & 0.984 & 1.000 &  & 0.161 & 0.977 & 1.000 &  &   0.098 & 0.991 & 1,000 &  & 0.164 & 0.985 & 1.000 \\ 
& 500 &  & 0.089 & 1.000 & 1.000 &  & 0.148 & 0.998 & 1.000 &  &   0.083 & 1.000 & 1,000 &  & 0.160 & 0.999 & 1.000 \\ 
& 1000 &  & 0.081 & 1.000 & 1.000 &  & 0.131 & 1.000 & 1.000 &  &   0.085 & 1.000 & 1,000 &  & 0.139 & 1.000 & 1.000 \\
 &  &  &  &  &  &  &  &  &  &  &  &  &  &  &   &  \\ 
	\multirow{4}{*}{\rotatebox{90}{$l_n=n^{0.5}$}} &100 &  & 0.116 & 0.689 & 0.998 &  & 0.155 & 0.706 & 0.996 &  &   0.110 & 0.748 & 0.999 &  & 0.165 & 0.740 & 0.999 \\
&300 &  & 0.089 & 0.979 & 1.000 &  & 0.125 & 0.969 & 1.000 &  &   0.083 & 0.986 & 1,000 &  & 0.136 & 0.976 & 1.000 \\ 
& 500 &  & 0.084 & 0.999 & 1.000 &  & 0.126 & 0.998 & 1.000 &  &   0.078 &1.000 & 1,000 &  & 0.125 & 0.999 & 1.000 \\ 
& 1000 &  & 0.073 & 1.000 & 1.000 &  & 0.112 & 1.000 & 1.000 &  &   0.069 & 1.000 & 1,000 &  & 0.124 & 1.000 & 1.000\\ 
 &  &  &  &  &  &  &  &  &  &  &  &  & &  &   &  \\ 
	\multirow{4}{*}{\rotatebox{90}{$l_n=n^{0.6}$}} & 100 &  & 0.123 & 0.683 & 0.997 &  & 0.150 & 0.679 & 0.994 &  &   0.120 & 0.730 & 0.999 &  & 0.148 & 0.720 & 0.996 \\ 
& 300 &  & 0.091 & 0.973 & 1.000 &  & 0.118 & 0.958 & 1.000 &  &   0.089 & 0.983 & 1,000 &  & 0.115 & 0.971 & 1.000 \\ 
& 500 &  & 0.086 & 0.999 & 1.000 &  & 0.116 & 0.995 & 1.000 &  &   0.084 & 1.000 & 1,000 &  & 0.118 & 0.996 & 1.000 \\ 
& 1000 &  & 0.074 & 1.000 & 1.000 &  & 0.097 & 1.000 & 1.000 &  &   0.072 & 1.000 & 1,000 &  & 0.101 & 1.000& 1.000 \\ 
\end{tabular}
\label{table:linear}
\end{table}
\end{landscape}

\begin{landscape}
\begin{table}[htbp]
			\caption{Rejection rates of the hypothesis tests resulting from the empirical distance covariance  and the empirical covariance
		obtained by  subsampling   based on  \enquote{parabolically} correlated  time series $X_j, \ j=1, \ldots, n$, $Y_j, \ j=1, \ldots, n$ according to
Section \ref{subsec:simulations}
 with block length $l_n$,  $d=0.1n$, and Hurst parameter $H$. The level of significance equals 5\%.}
\begin{tabular}{ccccccccccccccccccc}
 & & &   \multicolumn{7}{c}{distance covariance} &  & \multicolumn{7}{c}{covariance}\\
 				\cline{5-9}   \cline{13-17}\\
 \\
				& &  & & $H=0.6$ & &  & & $H=0.7$ & &  & & $H=0.6$ & &  & & $H=0.7$ & \\ 
				\cline{4-6}   \cline{8-10} \cline{12-14} \cline{16-18} \\
& $n$ &  & $v=0.5$ & $v=0.75$ & $v=1$ &  & $v=0.5$ & $v=0.75$ & $v=1$ &  & $v=0.5$ & $v=0.75$ & $v=1$ &  & $v=0.5$ & $v=0.75$ & $v=1$ \\ 
\hline\\
& &  &  &  &  &  &  &  &  &  &  &  &  &  &  &  &  \\ 
	\multirow{4}{*}{\rotatebox{90}{$l_n=n^{0.4}$}}  & 100 &  & 0.308 & 0.637 & 0.923 &  & 0.364 & 0.697 & 0.940 &  & 0.130 & 0.146 & 0.196 &  & 0.186 & 0.256 & 0.324 \\ 
& 300 &  & 0.719 & 0.993 & 1.000 &  & 0.785 & 0.996 &1.000 &  & 0.107 & 0.136 & 0.176 &  & 0.190 & 0.284 & 0.387 \\ 
& 500 &  & 0.943 &1.000 &1.000 &  & 0.954 &1.000 &1.000 &  & 0.103 & 0.129 & 0.187 &  & 0.190 & 0.301 & 0.410 \\ 
& 1000 &  &1.000 &1.000 &1.000 &  &1.000 &1.000 &1.000 &  & 0.100 & 0.138 & 0.192 &  & 0.226 & 0.346 & 0.443 \\ 
 &  &  &  &  &  &  &  &  &  &  &  &  &  &  &  &  \\ 
	\multirow{4}{*}{\rotatebox{90}{$l_n=n^{0.5}$}}  & 100 &  & 0.296 & 0.605 & 0.900 &  & 0.340 & 0.662 & 0.919 &  & 0.124 & 0.139 & 0.194 &  & 0.174 & 0.245 & 0.307 \\ 
& 300 &  & 0.678 & 0.986 &1.000 &  & 0.735 & 0.991 &1.000 &  & 0.104 & 0.132 & 0.168 &  & 0.185 & 0.268 & 0.368 \\ 
& 500 &  & 0.923 & 1.000 &1.000 &  & 0.936 &1.000 &1.000 &  & 0.102 & 0.126 & 0.181 &  & 0.177 & 0.286 & 0.392 \\ 
& 1000 &  &1.000 &1.000 &1.000 &  & 1.000 &1.000 &1.000 &  & 0.099 & 0.135 & 0.187 &  & 0.216 & 0.332 & 0.429 \\ 
 &  &  &  &  &  &  &  &  &  &  &  &  &  &  &  &  \\ 
	\multirow{4}{*}{\rotatebox{90}{$l_n=n^{0.6}$}}  & 100 &  & 0.309 & 0.603 & 0.887 &  & 0.337 & 0.662 & 0.901 &  & 0.132 & 0.151 & 0.201 &  & 0.177 & 0.250 & 0.306 \\ 
& 300 &  & 0.667 & 0.978 & 0.999 &  & 0.708 & 0.981 & 1.000 &  & 0.111 & 0.138 & 0.173 &  & 0.184 & 0.264 & 0.365 \\ 
& 500 &  & 0.902 &1.000 &1.000 &  & 0.915 &1.000 &1.000 &  & 0.105 & 0.133 & 0.192 &  & 0.185 & 0.286 & 0.383 \\ 
& 1000 &  & 0.999 &1.000 &1.000 &  & 0.999 &1.000 &1.000 &  & 0.102 & 0.144 & 0.190 &  & 0.215 & 0.324 & 0.423 \\  
\end{tabular}
\label{table:squares}
\end{table}
\end{landscape}

\begin{landscape}
\begin{table}[htbp]
	\caption{Rejection rates of the hypothesis tests resulting from the empirical distance covariance  and the empirical covariance
				obtained by  subsampling   based on  \enquote{wavily} correlated  time series $X_j$, $j=1, \ldots, n$, $Y_j, \ j=1, \ldots, n$  according to
Section \ref{subsec:simulations} 
with block length $l_n$,  $d=0.1n$, and  Hurst parameter $H$.
				The level of significance equals 5\%.}
\begin{center}
\begin{tabular}{cccccccccccccccccc}
 & & &   \multicolumn{7}{c}{distance covariance} &  & \multicolumn{7}{c}{covariance}\\
 				\cline{5-9}   \cline{13-17}\\
				& &  & & $H=0.6$ & &  & & $H=0.7$ & &  & & $H=0.6$ & &  & & $H=0.7$ & \\ 
				\cline{4-6}   \cline{8-10} \cline{12-14} \cline{16-18} \\
	&			 $n$ &  & $v=1$ &$v=2$ & $v=3$ &  & $v=1$ &$v=2$ & $v=3$ &  &$v=1$ &$v=2$ & $v=3$ &  & $v=1$ &$v=2$ & $v=3$ \\
				\hline \\
 &  &  &  &  &  &  &  &  &  &  &  &  &  &  &  &  \\ 
\multirow{4}{*}{\rotatebox{90}{$l_n=n^{0.4}$}}  & 100 &  & 0.224 & 0.494 & 0.931 &  & 0.281 & 0.573 & 0.943 &  & 0.144 & 0.186 & 0.282 &  & 0.199 & 0.290 & 0.430 \\ 
& 300 &  & 0.442 & 0.964 & 1.000 &  & 0.538 & 0.970 & 1.000 &  & 0.118 & 0.188 & 0.283 &  & 0.213 & 0.328 & 0.467 \\ 
& 500 &  & 0.716 & 1.000 & 1.000 &  & 0.788 & 1.000 & 1.000 &  & 0.120 & 0.176 & 0.294 &  & 0.223 & 0.362 & 0.497 \\ 
& 1000 &  & 0.992 & 1.000 & 1.000 &  & 0.996 & 1.000 & 1.000 &  & 0.128 & 0.193 & 0.300 &  & 0.244 & 0.391 & 0.531 \\ 
 &  &  &  &  &  &  &  &  &  &  &  &  &  &  &  &  \\ 
\multirow{4}{*}{\rotatebox{90}{$l_n=n^{0.5}$}}  & 100 &  & 0.220 & 0.478 & 0.921 &  & 0.273 & 0.543 & 0.932 &  & 0.140 & 0.184 & 0.283 &  & 0.185 & 0.273 & 0.417 \\ 
& 300 &  & 0.415 & 0.941 & 1.000 &  & 0.500 & 0.953 & 1.000 &  & 0.118 & 0.181 & 0.280 &  & 0.196 & 0.309 & 0.453 \\ 
& 500 &  & 0.679 & 0.999 & 1.000 &  & 0.740 & 0.999 & 1.000 &  & 0.116 & 0.175 & 0.292 &  & 0.211 & 0.347 & 0.487 \\ 
& 1000 &  & 0.984 & 1.000 & 1.000 &  & 0.988 & 1.000 & 1.000 &  & 0.126 & 0.187 & 0.296 &  & 0.229 & 0.374 & 0.516 \\ 
 &  &  &  &  &  &  &  &  &  &  &  &  &  &  &  &  \\ 
\multirow{4}{*}{\rotatebox{90}{$l_n=n^{0.6}$}}  & 100 &  & 0.229 & 0.487 & 0.914 &  & 0.273 & 0.541 & 0.922 &  & 0.148 & 0.188 & 0.298 &  & 0.188 & 0.276 & 0.413 \\ 
& 300 &  & 0.431 & 0.919 & 1.000 &  & 0.491 & 0.931 & 1.000 &  & 0.126 & 0.192 & 0.283 &  & 0.195 & 0.312 & 0.452 \\ 
& 500 &  & 0.658 & 0.995 & 1.000 &  & 0.717 & 0.996 & 1.000 &  & 0.122 & 0.179 & 0.300 &  & 0.214 & 0.343 & 0.485 \\ 
& 1000 &  & 0.969 & 1.000 & 1.000 &  & 0.971 & 1.000 & 1.000 &  & 0.128 & 0.190 & 0.299 &  & 0.224 & 0.372 & 0.510 \\ 
\end{tabular}
\end{center}
\label{table:wave}
\end{table}
\end{landscape}

\begin{landscape}
\begin{table}[htbp]
	\caption{Rejection rates of the hypothesis tests resulting from the empirical distance covariance  and the empirical covariance
				obtained by  subsampling   based on  \enquote{rectangularly} correlated  time series $X_j$, $j=1, \ldots, n$, $Y_j, \ j=1, \ldots, n$ according  to
Section \ref{subsec:simulations} 
with block length $l_n$,  $d=0.1n$, and Hurst parameters $H$.
				The level of significance equals 5\%}
\begin{center}
\begin{tabular}{cccccccccccccccccc}
 & & &   \multicolumn{7}{c}{distance covariance} &  & \multicolumn{7}{c}{covariance}\\
 				\cline{5-9}   \cline{13-17}\\
 &  &  & & $H=0.6$ &  &  &  & $H=0.7$ &  &  &  & $H=0.6$ &  &  &  & $H=0.7$ &  \\ 
				\cline{4-6}   \cline{8-10} \cline{12-14} \cline{16-18} \\
& $n$ &  & $v=1$ & $v=2$ & $v=3$ &  & $v=1$ & $v=2$ & $v=3$ &  & $v=1$ & $v=2$ & $v=3$ &  & $v=1$ & $v=2$ & $v=3$ \\ 
\hline
 &  &  &  &  &  &  &  &  &  &  &  &  &  &  &  &  \\ 
\multirow{4}{*}{\rotatebox{90}{$l_n=n^{0.4}$}}  & 100 &  & 0.273 & 0.448 & 0.381 &  & 0.391 & 0.620 & 0.570 &  & 0.090 & 0.059 & 0.038 &  & 0.141 & 0.093 & 0.072 \\ 
& 300 &  & 0.733 & 0.990 & 0.985 &  & 0.819 & 0.995 & 0.994 &  & 0.078 & 0.037 & 0.023 &  & 0.114 & 0.070 & 0.050 \\ 
&500 &  & 0.966 & 1.000 & 1.000 &  & 0.980 & 1.000 & 1.000 &  & 0.065 & 0.034 & 0.017 &  & 0.104 & 0.061 & 0.036 \\ 
& 1000 &  & 1.000 & 1.000 & 1.000 &  & 1.000 & 1.000 & 1.000 &  & 0.058 & 0.025 & 0.014 &  & 0.091 & 0.053 & 0.036 \\ 
 &  &  &  &  &  &  &  &  &  &  &  &  &  &  &  &  \\ 
\multirow{4}{*}{\rotatebox{90}{$l_n=n^{0.5}$}}  & 100 &  & 0.254 & 0.425 & 0.366 &  & 0.343 & 0.551 & 0.506 &  & 0.092 & 0.063 & 0.042 &  & 0.131 & 0.091 & 0.065 \\ 
& 300 &  & 0.663 & 0.972 & 0.962 &  & 0.720 & 0.977 & 0.978 &  & 0.076 & 0.038 & 0.025 &  & 0.104 & 0.059 & 0.045 \\ 
& 500 &  & 0.932 & 0.999 & 1.000 &  & 0.948 & 1.000 & 1.000 &  & 0.066 & 0.034 & 0.019 &  & 0.093 & 0.053 & 0.030 \\ 
&1000 &  & 1.000 & 1.000 & 1.000 &  & 1.000 & 1.000 & 1.000 &  & 0.055 & 0.025 & 0.015 &  & 0.077 & 0.042 & 0.028 \\ 
 &  &  &  &  &  &  &  &  &  &  &  &  &  &  &  &  \\ 
\multirow{4}{*}{\rotatebox{90}{$l_n=n^{0.6}$}}  &  100 &  & 0.266 & 0.437 & 0.399 &  & 0.334 & 0.530 & 0.498 &  & 0.113 & 0.079 & 0.058 &  & 0.138 & 0.102 & 0.078 \\ 
& 300 &  & 0.646 & 0.946 & 0.937 &  & 0.663 & 0.942 & 0.950 &  & 0.090 & 0.052 & 0.038 &  & 0.106 & 0.071 & 0.052 \\ 
& 500 &  & 0.893 & 0.998 & 0.997 &  & 0.902 & 0.997 & 0.997 &  & 0.075 & 0.042 & 0.028 &  & 0.091 & 0.055 & 0.034 \\ 
& 1000 &  & 1.000 & 1.000 & 1.000 &  & 0.998 & 1.000 & 1.000 &  & 0.060 & 0.034 & 0.022 &  & 0.075 & 0.046 & 0.034 \\ 
\end{tabular}
\end{center}
\label{table:rectangular}
\end{table}
\end{landscape}
\end{appendix}

\end{document}